\documentclass[12pt,a4paper]{amsart}
\usepackage[utf8]{inputenc}
\usepackage[T1]{fontenc}
\usepackage[english]{babel}
\usepackage[all]{xy}
\usepackage{fancyhdr}
\usepackage{array}
\usepackage{amsfonts}
\usepackage{bbm}
\usepackage{amsmath}
\usepackage{amssymb}
\usepackage{graphicx}
\usepackage{stmaryrd}
\usepackage{moreverb}
\usepackage{geometry}
\usepackage{amsthm}
\usepackage{color}
\usepackage{bbold}
\usepackage[style = alphabetic, maxbibnames = 99, backend = bibtex]{biblatex}
\usepackage{yhmath}
\usepackage{tikz}
\usepackage{hyperref}
\usepackage{cleveref}
\usepackage{csquotes}

\title[Klt varieties with semipositive anticanonical bundle]{Klt varieties with semipositive anticanonical bundle: symmetric tensors, holonomy and structure}
\author{Louis Dailly}
\address{Louis Dailly, Institut de Math\'ematiques de Toulouse, Universit\'e Paul Sabatier, 31062 Toulouse Cedex~9, France}
\email{\href{mailto:louis.dailly@utoulouse.fr}{louis.dailly@utoulouse.fr}}
\urladdr{\href{https://www.louis-dailly.github.io}{louis-dailly.github.io/}}

\newcommand{\incl}[1][r]
  {\ar@<-0.2pc>@{^(-}[#1] \ar@<+0.2pc>@{-}[#1]}
\newcommand{\immouv}[1][r]
   {\ar@{}[#1] |*[o][F]{\hbox{%
         \vrule width 1.5mm height 0pt depth 0pt%
         \vrule width 0pt height .75mm depth .75mm%
         }}
     \ar@{^{(}->}[#1]}

\newcommand{\Z}{\mathbb{Z}}
\newcommand{\Q}{\mathbb{Q}}
\newcommand{\R}{\mathbb{R}}
\newcommand{\C}{\mathbb{C}}

\newcommand{\reg}{\mathrm{reg}}

\newcommand{\End}{\mathrm{End}\,}
\newcommand{\GL}{\mathrm{GL}\,}
\newcommand{\PGL}{\mathrm{PGL}\,}
\newcommand{\SU}{\mathrm{SU}\,}
\newcommand{\U}{\mathrm{U}\,}
\newcommand{\Sp}{\mathrm{Sp}\,}
\newcommand{\id}{\mathrm{id}}

\newcommand{\ddc}{\mathrm{dd^c}}
\newcommand{\Ric}{\mathrm{Ric}\,}
\newcommand{\tr}{\mathrm{tr}\,}
\newcommand{\Sym}{\mathrm{Sym}\,}
\newcommand{\Hol}{\mathrm{Hol}}
\newcommand{\hol}{\mathrm{hol}}

\newcommand{\Aut}{\mathrm{Aut}\,}

\def\dar[#1]{\ar@<2pt>[#1]\ar@<-2pt>[#1]}
  \entrymodifiers={!!<0pt,0.7ex>+}

\theoremstyle{plain}
\newtheorem{theo}{Theorem}[section]
\newtheorem{theoalph}{Theorem}

\newtheorem{lemm}[theo]{Lemma}
\newtheorem{prop}[theo]{Proposition}

\theoremstyle{definition}
\newtheorem{defi}[theo]{Definition}

\newtheorem{rema}[theo]{Remark}

\newtheorem*{ackn}{Acknowledgement}
\newtheorem*{aid}{AI Disclosure}

\newenvironment{itemize*}%  % D\'efinition d'un nouvel environnement
    {\begin{itemize}%
      \setlength{\itemsep}{0pt}%
      \setlength{\parskip}{0pt}}%
    {\end{itemize}}

\bibliography{mabiblio}
\AtBeginBibliography{\tiny}

\begin{document}

\begin{abstract}
	The aim of this article is to study klt projective varieties $X$ such that $-K_X$ is semipositive from the differential-geometric point of view. The study of the holonomy of such varieties yields generalizations of several results. We show that the dimension of the space of symmetric differentials is bounded by a constant depending on the augmented irregularity of $X$, generalizing a result of Kobayashi. If we endow $X$ with a metric $\omega$ such that $\Ric \omega \geq 0$, then we show that the holonomy of $(X_\reg, \omega_\reg)$ is compact. If we assume that $X$ has canonical singularities, then we show that the general fiber of the Albanese map can be decomposed into a product of irreducible Calabi--Yau varieties, irreducible holomorphic symplectic varieties and hereditarily rationally connected varieties by the holonomy generalizing a structure theorem of Campana, Demailly and Peternell. To conclude, we show that the fundamental group of the regular locus of such varieties is virtually abelian.
\end{abstract}

\maketitle

\tableofcontents

\section{Introduction}

Let $X$ be a complex projective manifold. When $X$ is simply connected and has a vanishing first Chern class, it has been shown by Kobayashi in \cite{Kob80} that $X$ have neither non-trivial symmetric tensor fields nor non-trivial symmetric differentials. In the same article, the author shows an upper bound of the dimension of holomorphic symmetric differentials (resp. tensor fields) on $X$, when $X$ has a semipositive (resp. seminegative) first Chern class.

\begin{theo}\textup{\cite[Thm.~7]{Kob80}}
    Let $X$ be a compact Kähler manifold with $c_1(X) \geq 0$ (resp. $c_1(X) \leq 0$). Let
$\widetilde{X}$ be the universal cover of $X$ and let $k$ be the largest integer such that $\widetilde{X}$ is biholomorphic to $\C^k \times Y$ , where $Y$ is a complex manifold. Then, we have:
$$
h^0(X, \Sym^m \Omega^1_X) \leq \binom{m+k-1}{m} \; \text{(resp. }h^0(X, \Sym^m T_X) \leq \binom{m+k-1}{m}).
$$
\end{theo}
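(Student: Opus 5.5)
The plan is to use Yau's solution of the Calabi conjecture to put a Kähler metric $\omega$ on $X$ with $\Ric \omega \geq 0$ (resp. $\Ric\omega \leq 0$). For the seminegative case I read $c_1(X)\le 0$ as meaning that $c_1(X)$ contains a semi-negative form, so that Yau's theorem applies. The argument then combines a Bochner principle for holomorphic tensors with the de Rham and Cheeger--Gromoll splitting of the universal cover.

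\textbf{Step 1: parallel tensors.} Let $s \in H^0(X, \Sym^m \Omega^1_X)$. The Bochner formula gives
$$\Delta |s|^2 = |\nabla s|^2 + \langle \mathcal{R}(s), s\rangle .$$
Here the curvature term is built from the Ricci curvature acting as a derivation on $\Sym^m \Omega^1_X$, and it is $\geq 0$ when $\Ric\omega\geq 0$. Integrating over the compact manifold $X$ forces $\nabla s = 0$. In the resp. case one applies the same argument to sections of $\Sym^m T_X$, using $\Ric \le 0$. So every such section is parallel, and is determined by its value at a point $x$. That value is invariant under the holonomy group $\Hol_x$. Pulling back to $\widetilde{X}$, the section becomes invariant under the restricted holonomy $\Hol^0$ and under $\pi_1(X)$.

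\textbf{Step 2: splitting of $\widetilde X$.} By de Rham decomposition, $(\widetilde{X}, \widetilde\omega) = \C^{q} \times \prod_i Y_i$ isometrically and holomorphically. Here $\C^q$ is the flat factor, and the $Y_i$ are irreducible Kähler factors with holonomy acting irreducibly on $T Y_i$. Since $q \le k$ by maximality of $k$, it suffices to prove the bound with $q$ in place of $k$, because $\binom{m+q-1}{m}$ is increasing in $q$.

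\textbf{Step 3: invariants of the holonomy.} The holonomy representation decomposes as
$$T_x \widetilde X = \C^q \oplus \bigoplus_i T_{x_i}Y_i,$$
with trivial action on $\C^q$ and irreducible non-trivial actions of $\Hol(Y_i)$ on $T Y_i$. I would then show that no $Y_i$ contributes to the invariants, i.e. that
$$(\Sym^m (T^*_x\widetilde X))^{\Hol^0} = \Sym^m (\C^q)^*.$$
The dimension of the right-hand side is $\binom{m+q-1}{m}$.

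\textbf{Main obstacle.} This vanishing is the hard step. For $\Ric\ge0$, the Cheeger--Gromoll theorem applied to the non-flat part, together with compactness of $X$, shows that each $Y_i$ is either compact simply connected with $\Ric\ge 0$, or Ricci-flat irreducible. In the irreducible Ricci-flat case, Berger's list leaves only $\SU(n)$ and $\Sp(n)$. For these groups, symmetric powers of the standard representation have no invariants: the center $\U(1)$ is absent, but one checks directly that $\Sym^m$ of the standard representation of $\SU(n)$ or $\Sp(n)$ is irreducible and non-trivial for $m\ge1$. For the non-Ricci-flat factors, I would instead integrate the Bochner formula on $Y_i$. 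Here $Y_i$ is compact, and on the non-flat part the Ricci form is nonzero somewhere, while the term $\langle \mathcal R(s),s\rangle$ must vanish identically for parallel $s$. Combined with irreducibility, this kills any component of $s$ involving $TY_i$. The mixed components of a symmetric tensor are handled by expanding
$$\Sym^m(A\oplus B)=\bigoplus_j \Sym^j A\otimes \Sym^{m-j}B,$$
where $A$ is the flat part and $B$ the non-flat part. The invariants then reduce to $\Sym^j A \otimes (\Sym^{m-j}B)^{\Hol}$, which vanishes unless $j=m$. The resp. case is dual, with $T_X$ and $\Ric\le0$ in place of $\Omega^1_X$ and $\Ric\ge0$. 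Injectivity of evaluation at $x$ then concludes the proof.
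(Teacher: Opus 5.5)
The paper does not reprove this cited statement. The relevant comparison is its proof of Theorem~A, which follows Kobayashi in three steps:
\begin{itemize}
\item a Bochner principle;
\item a holonomy splitting, obtained there on the weak holonomy cover because de Rham fails on the incomplete $X_\reg$ (you can use de Rham on $\widetilde X$ directly);
\item vanishing of sections of $\Sym^m$ of every non-flat summand.
\end{itemize}
Your outline agrees with this except in the last step, where your case distinction is genuinely different.

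The paper separates \emph{symmetric} from \emph{non-symmetric} factors. For a symmetric factor the Ricci endomorphism is parallel, so Schur gives $\sharp\alpha_j=\lambda\,\id$ with $\lambda>0$ (since $\lambda=0$ would force flatness). The Bochner condition $\langle\overline{\sharp\alpha}^{\Sym^m}\xi,\xi\rangle=0$ then kills $\xi$ immediately. For a non-symmetric factor, Berger gives $\U(n_j),\SU(n_j),\Sp(n_j/2)$, and irreducibility of $\Sym^m$ concludes.

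You instead separate \emph{Ricci-flat} from \emph{non-Ricci-flat} factors. Berger is then only used for $\SU$ and $\Sp$; you still need that a Ricci-flat locally symmetric factor is flat, so that you land in the non-symmetric part of the list. The $\U(n)$-factors and the symmetric factors are treated together through the Ricci term. This is a valid alternative, but it requires one argument that your sketch does not supply.

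The issue is that on a non-Ricci-flat factor the Ricci endomorphism is not parallel. Its kernel is therefore not holonomy-invariant, and ``irreducibility'' cannot be applied to it. Apply irreducibility to the support of the tensor instead:
\begin{itemize}
\item Let $t$ be a nonzero parallel section of $\Sym^l T^*Y_i$ with $l\ge 1$. It is annihilated at every point by the Ricci derivation, since curvature kills parallel tensors.
\item At $y\in Y_i$, let $W\subseteq T^*_yY_i$ be the smallest subspace with $t_y\in\Sym^l W$, i.e.\ the image of contracting $t_y$ against $\Sym^{l-1}T_yY_i$. It is $\Hol_y(Y_i)$-invariant, so irreducibility gives $W=T^*_yY_i$.
\item Since $\Ric(Y_i)\ge 0$, the Ricci endomorphism $r_y$ of $T^*_yY_i$ is semidefinite. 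Hence the kernel of the induced derivation $r_y^{\Sym^l}$ is $\Sym^l(\ker r_y)$.
\item Therefore $W\subseteq\ker r_y$, i.e.\ $\Ric(Y_i)_y=0$. Choosing $y$ with $\Ric(Y_i)_y\neq 0$ gives $t=0$.
\end{itemize}
The sign hypothesis is essential here: without it, the kernel of the derivation need not be a symmetric power.

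Finally, the detour through Cheeger--Gromoll and compactness of $Y_i$ is unnecessary. In the resp.\ case it is also false: for a ball quotient, $\widetilde X$ is the ball. So ``integrate the Bochner formula on $Y_i$'' should be replaced by the pointwise vanishing $\mathcal R(s)=0$ for parallel $s$, which you already state. With these adjustments your argument works in both cases.
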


This upper bound depends on the dimension of the euclidean factor $k$ in the universal cover. The proof of these results relies on techniques from differential geometry, namely the Bochner's principle and the study of the restricted holonomy of $X$ endowed with a suitable metric $\omega$, coupled with the deep De Rham decomposition theorem. 
In the non-negative curvature case, the authors of \cite{CDP14} establish a link between the differential and algebraic techniques through a structure theorem providing a decomposition of the universal cover of $X$.  Such a decomposition theorem generalizes the classical Beauville--Bogomolov decomposition theorem to the case of manifolds with semi-positive first Chern class. Thanks to this decomposition theorem, they show that the Albanese map and the MRC-fibration are \emph{locally constant} (it means that it is a locally trivial fibration, obtained from a representation of the base space with values in the automorphism group of the fiber, see \cite{MW21} for details). The local constancy of the Albanese map has been extended to the case where $-K_X$ is nef in \cite{Cao19}. In the same article, the author gives a new interpretation of the quantity $k$ aforementioned: it is equal to \emph{the augmented irregularity} of $X$, denoted $\widetilde{q}(X)$:
$$
\widetilde{q}(X) := \sup \left\lbrace h^0(X', \Omega^1_{X'}) \;\;|\;\; \pi : X' \to X \text{ finite \'{e}tale cover } \right\rbrace.
$$
This gives a more algebraic and more intrinsic interpretation of $k$: this quantity is for instance independent of the metric $\omega$, and depends only on finite \'{e}tale covers that are encapsulated in algebraic invariant like the algebraic fundamental group of $X$. The proof of this result is based on algebraic structure theorems of varieties with nef anticanonical bundle, namely the local constancy of the Albanese map proved in the same article.\\

In the singular setting, the authors of \cite{GGK19} establish a generalization of many results when $c_1(X) = 0$. Coupled with \cite{Dru18, HP19}, they show a singular version of Beauville--Bogomolov decomposition, and identify irreducible Calabi--Yau varieties and irreducible holomorphic symplectic varieties via holonomy. In their study, they prove a version of Cheeger--Gromoll theorem about compactness of holonomy groups (see \cite[Lemma.~7.8]{GGK19} and \cite[Thm.~6]{CG71}).\\

The aim of this article is to investigate a mildly singular version of these results. We first state an estimate of dimension of reflexive symmetric tensors:

\begin{theoalph}\label{Bound of symmetric tensors}
	Let $X$ be klt projective variety such that $-K_X$ is semi-positive (resp. semi-negative). Let $k$ be the rank of the subspace fixed by the restricted holonomy. Then, we have:
	$$
	h^0(X, \Sym^{[m]} \Omega^{[1]}_X) \leq \binom{m+k-1}{m} \; \text{(resp. }h^0(X, \Sym^{[m]} \mathcal{T}_X) \leq 	\binom{m+k-1}{m}).
	$$
\end{theoalph}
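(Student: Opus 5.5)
The plan is to transport Kobayashi's argument to the regular locus via a singular Kähler metric with non-negative (resp. non-positive) Ricci curvature, combined with a Bochner principle for reflexive tensors.

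\textbf{Step 1: the metric.} Since $-K_X$ is semipositive, it carries a smooth hermitian metric of semipositive curvature. Fix an ample class $\alpha$ and solve a singular Monge--Ampère equation in the style of Eyssidieux--Guedj--Zeriahi. This produces a Kähler current $\omega\in\alpha$ with bounded potentials, smooth on $X_\reg$, whose Ricci form satisfies $\Ric\omega_\reg = \theta \geq 0$, where $\theta$ is the smooth semipositive form representing $c_1(-K_X)$. In the semi-negative case we obtain $\Ric\omega_\reg\leq 0$ in the same way.

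\textbf{Step 2: Bochner principle.} Next we show that every $\sigma\in H^0(X,\Sym^{[m]}\Omega^{[1]}_X)$ is parallel on $(X_\reg,\omega_\reg)$. On $X_\reg$ one computes
$$\Delta_\omega |\sigma|^2 = |\nabla\sigma|^2 + \langle \mathcal{R}\sigma,\sigma\rangle .$$
The curvature term of $\Sym^m\Omega^1$ only involves the Ricci curvature through the trace, and is $\geq 0$ because $\Ric\omega\geq 0$. For tangent tensors the sign flips, which is why we need $\Ric\omega\leq0$ there. Thus $|\sigma|^2$ is psh-like (subharmonic) on $X_\reg$.

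To conclude, we must integrate by parts on the incomplete manifold $X_\reg$. Here we use two facts: $\omega$ has bounded potentials, and $|\sigma|^2_\omega$ is bounded (or at least in $L^\infty$, by comparison with a resolution and the $L^\infty$ estimates for the metric). Cut-off functions $\chi_\varepsilon$ with $\int|\partial\chi_\varepsilon|^2_\omega\omega^n\to0$, built from a log-resolution and the codimension $\geq2$ of $X_{\mathrm{sing}}$ together with capacity arguments, then give $\int_{X_\reg}|\nabla\sigma|^2\omega^n=0$. This is the main obstacle: controlling $|\sigma|_\omega$ near the singular set and justifying the vanishing of boundary terms. I would follow the approach of Guenancia / \cite{GGK19}, approximating $\omega$ by smooth metrics $\omega_\varepsilon$ on a resolution with Ricci curvature $\geq -\varepsilon\omega_\varepsilon$ and passing to the limit.

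\textbf{Step 3: holonomy and counting.} A parallel section is determined by its value at a point $x\in X_\reg$. That value is invariant under the restricted holonomy group $\Hol^0$, and in fact under the full holonomy. Hence
$$H^0(X,\Sym^{[m]}\Omega^{[1]}_X)\hookrightarrow (\Sym^m T^*_xX)^{\Hol^0}.$$
Decompose $T_xX = V_0\oplus V_1\oplus\dots\oplus V_r$ into the $\Hol^0$-fixed part $V_0$, of dimension $k$, and nontrivial irreducible $\Hol^0$-modules.

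By a de Rham-type argument, $\Hol^0=\prod H_i$ with $H_i$ acting irreducibly on $V_i$. This holds locally, which is all that invariants require. Moreover, on a Kähler non-flat irreducible factor $H_i\subset \U(V_i)$ acts irreducibly, and the only polynomial invariants $p$ are constant unless $H_i$ preserves a quadratic form. The Kähler condition excludes the latter: $H_i\subset\U$ with an invariant complex symmetric form would make $V_i\cong V_i^*$ as complex representations, and such an irreducible holonomy contains the center $\U(1)$ if Ricci is nonzero. When Ricci vanishes, the Berger list (SU, Sp) shows there are no symmetric invariants. Here I would invoke Kobayashi's lemma \cite{Kob80} that irreducible non-flat Kähler holonomy has no nonzero invariants in $\Sym^m$ for $m\geq1$. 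The invariants are then $\Sym^m V_0^*$, of dimension $\binom{m+k-1}{m}$, which gives the bound. The tangent case is identical.
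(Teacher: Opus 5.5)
Your Steps 1 and 3 are essentially sound. Step 2, however, is where the whole singular difficulty sits, and it is not established: neither of your two ways of closing it works as stated.

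The cut-off argument needs $|\sigma|_\omega\in L^\infty$ (or comparable integrability) near $X_{\mathrm{sing}}$, and nothing you invoke provides it. The Eyssidieux--Guedj--Zeriahi theorem bounds the potential $\Phi$, not the metric $\omega$, so there is no pointwise comparison of $\omega$ with a reference metric near the singular set. Comparison with a resolution also fails, because for $m\geq 2$ a reflexive symmetric differential need not extend holomorphically to $\widehat X$. Already on the $A_1$ singularity $\C^2/\pm 1$, the invariant differential $(du)^2$ becomes $(dw)^2/4w$ on the minimal resolution, with a pole along the exceptional curve.

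Your fallback relies on a property the approximating metrics do not have. On a resolution, $\Ric\omega_{t,\varepsilon}=\pi^*\alpha-\sum_i a_i\theta_{i,\varepsilon}$ converges weakly to $\pi^*\alpha-\sum_i a_i[E_i]$, while $\varepsilon\,\omega_{t,\varepsilon}\to 0$ because the masses are cohomologically bounded. A bound $\Ric\omega_{t,\varepsilon}\geq-\varepsilon\,\omega_{t,\varepsilon}$ would therefore make $\pi^*\alpha-\sum_i a_i[E_i]$ a positive current. That is false as soon as some discrepancy $a_i$ is positive, e.g.\ for terminal singularities. What is true is only that the error terms coming from $\theta_{i,\varepsilon}$ tend to zero \emph{after integration} against $\omega_{t,\varepsilon}^{n-1}$. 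This is Step~3 of the proof of Proposition~\ref{Polystability of vanishing slopes}, based on \cite[Claim~9.5]{GGK19} and the exceptionality of the $E_i$. An error that is small only in $L^1$ cannot be fed into a pointwise Bochner formula with cut-offs.

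The missing idea is to avoid pointwise norms altogether and argue with slopes, as the paper does. Let $\mathcal F$ be the saturation of $\mathcal O_X\cdot\sigma$ in $\Sym^{[m]}\Omega^{[1]}_X$, and $\widehat{\mathcal F}$ its saturation in $\Sym^m\Omega^1_{\widehat X}$. The two have the same slope, since poles of $\pi^*\sigma$ only change $\widehat{\mathcal F}$ by exceptional divisors. Now compute $c_1(\widehat{\mathcal F})\cdot[\omega_{t,\varepsilon}]^{n-1}$ by Chern--Weil on a big open set. The curvature of the subbundle splits into three parts: the Ricci term, which has the right sign since $\alpha\geq 0$; the $\theta_{i,\varepsilon}$ error terms, which need only vanish in $L^1$; and a second fundamental form term of fixed sign. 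Fatou's lemma then forces the limiting second fundamental form and $\mathrm{pr}_{\mathcal F}\overline{\sharp\alpha}^{\Sym^m}_{|\mathcal F}$ to vanish on $X_\reg$. This is the parallelism you need (Remark~\ref{Bochner-Kobayashi property}).

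Granting it, your Step 3 is a correct and leaner alternative to the paper's argument.
\begin{itemize}
\item You bound $h^0$ directly by $\dim(\Sym^m T_x^*X)^{\Hol^0}$ using the algebraic splitting $\Hol^0=\prod_i H_i$, which requires no completeness. So you need neither the weak holonomy cover nor a global splitting of $\Sym^{[m]}\Omega^{[1]}$.
\item You kill the invariants of each factor by a dichotomy. If $\mathfrak h_i\not\subset\mathfrak{su}(V_i)$, then $\mathfrak h_i$, being reductive and irreducible, has centre $i\R\,\id_{V_i}$, which acts with weight $-m$ on $\Sym^m V_i^*$. If $\mathfrak h_i\subset\mathfrak{su}(V_i)$, the factor is Ricci-flat and non-flat, hence not locally symmetric, hence $\SU$ or $\Sp$.
\item The paper instead splits into symmetric and non-symmetric type. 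In the symmetric case it uses the positivity of the Einstein constant together with $\langle\overline{\sharp\alpha}^{\Sym^m}\xi,\xi\rangle=0$.
\end{itemize}
Drop your sentence that nonconstant invariants require an invariant quadratic form. It is not a valid general principle, and the dichotomy above is the actual argument.
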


\noindent
\emph{Strategy of the proof}\\
The proof in the smooth case goes in three steps:
\begin{itemize}
	\item Show Bochner's principle for symmetric differentials and vanishing when we apply Ricci tensor
	\item Study of the holonomy representation on the universal cover
	\item Show that global sections on the universal cover can come only from the euclidean factor in the De Rham splitting theorem.
\end{itemize}

To prove the first step, we show a sheaf-theoretic version of the Bochner principle: the subsheaves of $\Sym^{[m]}\Omega^{[1]}_X$ having a vanishing slope are parallel, they provide directions along which the Ricci tensor vanishes, and they are polystable. We prove this property by using approximated metrics on a resolution of $X$ to estimate how second fundamental forms behave on the regular locus of $X$, see e.g. \cite{Eno88, Gue16, GGK19, GT22, DGP24}. The second step is more subtle: one can define the restricted holonomy in the singular context by considering the holonomy of the metric on the regular part. However, since $(X_\reg, \omega_\reg)$ is not a complete Riemannian manifold, the De Rham splitting theorem fails, and the decomposition of the representation coming from restricted holonomy induced only a decomposition of the variety on small neighborhoods. To overpass this issue, it is possible to construct finite quasi-\'{e}tale covers (called \emph{weak holonomy covers}) on which the decomposition of irreducible representations of the holonomy acting on one point extend to a decomposition of the tangent sheaf $\mathcal{T}_X$. The third step is obtained similarly to \cite{Kob80}.\\

In the smooth case, we observe that the rank of the subspace fixed by the restricted holonomy is equal to the dimension of the euclidean factor $k$. This quantity remains equal to the augmented irregularity of $X$ in the singular setting.

\begin{theoalph}\label{Identification euclidean factor augmented irregularity}
	Let $X$ be a klt projective variety such that $-K_X$ is semipositive. Then, $k = \widetilde{q}(X)$.
\end{theoalph}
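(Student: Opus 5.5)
We prove the two inequalities $\widetilde{q}(X) \leq k$ and $k \leq \widetilde{q}(X)$ separately. Throughout, $k$ is the rank of the subspace of $T_xX_\reg$ fixed by the restricted holonomy of $(X_\reg,\omega_\reg)$, where $\omega$ has $\Ric\omega \geq 0$. We first check that $k$ is unchanged under finite quasi-\'{e}tale covers $\pi : X' \to X$. The pulled-back metric $\pi^*\omega$ is again a singular Kähler metric with nonnegative Ricci curvature on $X'$, and $-K_{X'} = \pi^*(-K_X)$ stays semipositive. Away from the branch locus, which has codimension $\geq 2$, $\pi$ is a local isometry. Restricted holonomy is generated by null-homotopic loops; these can be taken in the étale locus and lifted, so $\Hol^0$ is the same on $X'$ and on $X$. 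Hence $k(X') = k(X)$.

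\emph{The inequality $\widetilde{q}(X) \leq k$.} Let $X' \to X$ be a quasi-\'{e}tale (in particular étale) cover. Take $\sigma \in H^0(X', \Omega^{[1]}_{X'})$. The subsheaf $\mathcal{O}_{X'}\sigma$ has slope zero, so the sheaf-theoretic Bochner principle from the first step of Theorem~\ref{Bound of symmetric tensors} (case $m=1$) shows that $\sigma$ is parallel on $X'_\reg$. Parallel $1$-forms are fixed by the holonomy. Their dual vectors (via $\omega$) then lie in the fixed subspace, so $h^0(X',\Omega^{[1]}_{X'}) \leq k(X') = k$. Taking the supremum gives the inequality. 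Alternatively, this is just Theorem~\ref{Bound of symmetric tensors} with $m=1$ applied to $X'$, since $\binom{k}{1}=k$.

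\emph{The inequality $k \leq \widetilde{q}(X)$.} This is the hard direction: we must produce $k$ independent reflexive $1$-forms on some cover. We pass to a weak holonomy cover $X' \to X$, as constructed in the strategy of Theorem~\ref{Bound of symmetric tensors}. On it, the holonomy decomposition $T_x = V_0 \oplus \bigoplus_i V_i$ extends to a global splitting $\mathcal{T}_{X'} = \mathcal{F}_0 \oplus \bigoplus_i \mathcal{F}_i$ into parallel subsheaves. Here $\mathcal{F}_0$, of rank $k$, is the flat part, on which $\Hol^0$ acts trivially. Thus $\mathcal{F}_0|_{X'_\reg}$ is a flat hermitian bundle whose monodromy factors through $\pi_1(X'_\reg)/\mathrm{image}\,\pi_1^0$, a group acting by unitary transformations on $V_0$. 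The plan is to show this monodromy group is finite.

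\begin{itemize}
\item First we use the compactness of the holonomy, stated in the abstract as a result of the paper. By an argument in the spirit of \cite[Lemma~7.8]{GGK19}, the full holonomy on $V_0$ is a compact group whose identity component is trivial, hence a finite group.
\item An alternative route is to take the flat bundle $\mathcal{F}_0^*$, which extends as a flat reflexive sheaf on $X'$. Then we use that flat bundles on $X'_\reg$ for klt $X'$ extend, after a quasi-\'{e}tale cover, to flat bundles on a cover (\cite{GKP16}-type results). Combined with $\pi_1$-considerations, this gives unitary representations that become trivial on a finite quasi-\'{e}tale cover, using Malcev/Jordan-type finiteness for the image of $\pi_1$.
\end{itemize}

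Once the monodromy is finite, a further finite quasi-\'{e}tale cover $X''$ trivializes $\mathcal{F}_0^*$. This yields $k$ independent parallel sections of $\Omega^{[1]}_{X''}$, so $\widetilde{q}(X) \geq k$. Note that $\widetilde{q}$ is defined via étale covers. We reconcile this with quasi-\'{e}tale covers as follows. Reflexive $1$-forms on a quasi-\'{e}tale cover of a klt variety extend to a resolution (by Greb--Kebekus--Kov\'acs--Peternell extension). We can also use the known equality of quasi-étale and étale versions of the augmented irregularity in this setting, or simply define $\widetilde{q}$ with quasi-\'{e}tale covers, as is customary for klt varieties.

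The main obstacle is the finiteness of the monodromy of the flat factor $\mathcal{F}_0$. The incompleteness of $(X_\reg,\omega_\reg)$ prevents a direct use of the Cheeger--Gromoll argument. The first thing to try is to deduce finiteness from the compactness of the full holonomy group together with a discreteness argument: the flat part admits a lattice of integral classes coming from $H^1(X',\Z)$. Failing that, we would use the local constancy of the Albanese map for klt varieties with nef $-K_X$.
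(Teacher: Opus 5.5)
Your inequality $\widetilde{q}(X)\leq k$ is fine. It is essentially the paper's argument: invariance of the restricted holonomy under quasi-\'etale covers, plus Theorem~\ref{Bound of symmetric tensors} with $m=1$. The gap is in the converse direction.

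Reducing $k\leq\widetilde{q}(X)$ to finiteness of the monodromy of $\mathcal{F}_0$ is correct, but it is not progress. All reflexive $1$-forms on covers are parallel, hence invariant under the full holonomy. So $q(X'')=\dim (V_0^\vee)^{H_{X''}}$ for every finite quasi-\'etale cover $X''$ (in your notation), and this finiteness is \emph{equivalent} to the theorem. Neither of your two ways of closing it works.

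\textbf{The first route is circular.} Compactness of $\Hol_p(X_\reg,\omega_\reg)$ (Theorem~\ref{Compactness of holonomy}) is proved in the paper after, and by means of, the present statement. Lemma~\ref{Lemma H_Xa}, and the computation of parallel transport along flat lifts in Lemma~\ref{Holonomy vs monodromy}, both rest on Remark~\ref{Coincidence of decompositions albanese holonomy}. That remark is $\mathcal{F}_0=a^*\mathcal{T}_A$, which is exactly what you want to prove. The $c_1=0$ argument of \cite[Lemma~7.8]{GGK19} that you invoke likewise starts by splitting off the abelian factor, i.e.\ by controlling the flat directions first.

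\textbf{The second route fails for a structural reason.} Residual finiteness (Malcev) and Jordan's theorem say nothing about the image of a unitary representation being finite. Flat unitary bundles with infinite monodromy exist, for instance non-torsion unitary characters on an abelian variety. Nothing in your proposal rules out a summand of $\mathcal{V}_0$ of that kind. The lattice coming from $H^1(X',\Z)$ only sees the directions spanned by global $1$-forms. Those are the directions already under control, not their possible complement.

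The missing ingredient is the local constancy of the Albanese map, which you mention only as a fallback; it is how the paper argues.
\begin{enumerate}
\item Pass to a maximally quasi-\'etale cover with $q(X)=\widetilde{q}(X)$ on which the holonomy splitting extends.
\item By \cite{MW21} and the singular Beauville--Bogomolov decomposition of the base of the MRC fibration, $a:X\to A$ is locally trivial with fiber $X_a=F\times Y'$. Here $F$ is rationally connected and $Y'$ is a product of ICY and IHS varieties.
\item The non-flat summands $\mathcal{V}_j$ have no nonzero sections (proof of Theorem~\ref{Bound of symmetric tensors}), so $a^*\Omega^1_A\subseteq\mathcal{V}_0$.
\item Let $\mathcal{V}'$ be the orthogonal complement of $a^*\Omega^1_A$ in $\mathcal{V}_0$. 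It is flat, and its monodromy factors through $\pi_1(X)$.
\item Finiteness is now needed only along the fiber, where it is automatic: every linear representation of $\pi_1(X_a)$ has finite image. So $\mathcal{V}'|_{X_a}$ becomes trivial on a finite \'etale cover $F\times\widetilde{Y'}$ and would produce reflexive $1$-forms there.
\item There are none, by \cite[Thm.~5.1]{GKKP11} for $F$ and by $\widetilde{q}(Y')=0$. Hence $\mathcal{V}'=0$ and $k=q(X)=\widetilde{q}(X)$.
\end{enumerate}
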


Above a regular point $p$ of $X$, we can consider the subspaces of tangent vectors of $T_pX$ that are fixed by the (resp. restricted) holonomy, denoted $E_0$ (resp. $F_0$). Since the restricted holonomy is included in the total holonomy, we have $E_0 \subseteq F_0$. When we consider a quasi-\'{e}tale cover, one can show that the dimension of $F_0$ is unchanged, when the dimension of $E_0$ increases. This theorem establishes that there is a finite cover on which $E_0 = F_0$. To prove this, we show that flat directions in the tangent sheaves come from the Albanese map. That means that the subsheaf of $\mathcal{T}_X$ induced by fixed vectors coincides with $\mathrm{alb}^*\mathcal{T}_{\mathrm{Alb}(X)}$ using local constancy of MRC-fibrations and Albanese morphisms given by \cite{MW21}.\\

As pointed out in \cite{GGK19}, the holonomy of $(X_\reg, \omega_\reg)$ is connected up to a finite quasi-\'{e}tale cover of $X$ when $c_1(X) = 0$. This property remains true when $-K_X$ is semipositive.

\begin{theoalph}\label{Compactness of holonomy}
	Let $X$ be a klt projective variety endowed with a Kähler metric $\omega$ such that $\Ric \omega \geq 0$ is semipositive, and $p \in X_\reg$. Then, $\Hol_p(X_\reg, \omega_\reg)$ is compact.
\end{theoalph}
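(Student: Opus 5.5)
The plan is to follow the strategy of \cite[Lemma~7.8]{GGK19}, itself modeled on the Cheeger--Gromoll argument \cite[Thm.~6]{CG71}. Since the restricted holonomy $\Hol^0_p$ is the identity component of $\Hol_p$ and has countable index, compactness of $\Hol_p$ amounts to two facts: $\Hol^0_p$ is compact, and $\Hol_p/\Hol^0_p$ is finite.

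\textbf{Step 1: compactness of $\Hol^0_p$.} Decompose $T_pX$ under $\Hol^0_p$ as $F_0 \oplus \bigoplus_i E_i$, where $F_0$ is the fixed subspace and the $E_i$ are irreducible non-trivial summands. The decomposition is orthogonal because $\Hol^0_p \subseteq \U(T_pX)$. The restricted holonomy is then a product of the factors acting on the $E_i$, acting trivially on $F_0$.
\begin{itemize}
\item For each irreducible factor, apply the Berger--Simons classification locally on $X_\reg$. Each factor is either a symmetric space holonomy or one of $\U(n_i)$, $\SU(n_i)$, $\Sp(n_i/2)$. Symmetric space holonomies are compact, being isotropy groups.
\item Alternatively, use the closedness argument: a connected subgroup of $\U(n)$ acting irreducibly is compact once it contains its centre's closure. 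This follows because the holonomy algebra is generated by curvature and the centre is controlled by the Ricci form.
\end{itemize}
In every case the irreducible non-flat factors yield closed connected subgroups of $\U(E_i)$, so $\Hol^0_p$ is compact.

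\textbf{Step 2: reduction to the normaliser.} The whole group $\Hol_p$ normalises $\Hol^0_p$. It therefore permutes the isotypic components and preserves $F_0$. Hence $\Hol_p$ sits inside $N := N_{\U(T_pX)}(\Hol^0_p)$, which is a compact group. It remains to show that $\Hol_p$ is closed, or that its image in $N/\Hol^0_p$ is finite. The component group $\pi_0(N)$ restricted to the non-flat part is finite, since it consists of automorphisms of a compact semisimple-by-torus group modulo inner ones, together with permutations. So the only possible source of non-compactness is the action of $\Hol_p$ on the flat part $F_0$, through a representation $\rho:\pi_1(X_\reg)\to \U(F_0)$.

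\textbf{Step 3: the flat part, which I expect to be the main obstacle.} In the smooth case this is exactly where Cheeger--Gromoll uses completeness and the splitting of the universal cover. Here I would instead argue algebraically. The flat summand $F_0$ extends by parallel transport to a flat subbundle of $T_{X_\reg}$. Via the weak holonomy covers and the sheaf-theoretic Bochner principle established before, it gives a polystable reflexive subsheaf $\mathcal F\subseteq\mathcal T_X$ with flat metric. This corresponds to a unitary representation $\rho$ of $\pi_1(X_\reg)$.

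The aim is to show that $\rho$ has finite image after a quasi-étale cover, or at least closed image, so that it is compact. I would pass to a quasi-étale cover where, as in Theorem~\ref{Identification euclidean factor augmented irregularity}, $E_0=F_0$. That identification shows $\mathcal F\simeq \mathrm{alb}^*\mathcal T_{\mathrm{Alb}(X)}$ is trivial, so $\rho$ is trivial on a finite-index subgroup. Then the image of $\pi_1(X_\reg)$ in $N/\Hol^0_p$ is finite: it is finite on the flat part, and it lands in a finite component group on the rest.

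Consequently $\Hol_p$ is a finite union of cosets of the compact group $\Hol^0_p$, and is therefore compact. The delicate point is justifying that holonomy on $X_\reg$ of the original metric is detected on the cover and the Albanese, using that $\pi_1$ of the regular locus changes only by finite index under quasi-étale covers.
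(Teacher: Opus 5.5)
\textbf{The gap is at the end of Step 2}, in the inference that ``the only possible source of non-compactness is the action on $F_0$''. Your Steps 1 and 3 are sound. Compactness of $\Hol^0_p$ is the Borel--Lichnerowicz theorem, which is local and needs no completeness. On a maximally quasi-\'etale cover, Theorem~B identifies $\mathcal F_0$ with $a^*\mathcal T_A$, which is spanned by parallel sections, so holonomy acts on $F_0$ through a finite group.

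Finiteness of $\pi_0(N)$ is automatic for a compact Lie group, and it says nothing about the image of $\Hol_p$ in $N/\Hol^0_p$. The reason is that the identity component of $N$ is in general strictly larger than $\U(F_0)\times\Hol^0_p$. It contains the centralizer of $\Hol^0_p$, i.e.\ the unitary scalars on each $E_i$. On every factor with $G_i^\circ=\SU(n_i)$ or $\Sp(n_i/2)$, these scalars are not in $\Hol^0_p$, even up to a finite group. For instance, $N_{\U(n_i)}(\SU(n_i))=\U(n_i)$ is connected, yet $\U(n_i)/\SU(n_i)\simeq\U(1)$ is infinite.

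A holonomy element acting on such an $E_i$ by $e^{2\pi i\theta}\,\id$ with $\theta\notin\Q$ would make $\Hol_p$ non-closed, and nothing in your argument excludes it. Such factors genuinely occur: they are the ICY/IHS directions of the Albanese fiber. The dangerous elements come from loops winding around $\mathrm{Alb}(X)$. Take a suspension $(\C^q\times Y)/\Z^{2q}$ with $\Z^{2q}$ acting on an ICY variety $Y$ by automorphisms. Along such a loop, the holonomy on $\det T_Y$ is, up to inversion, the scalar by which the monodromy acts on a holomorphic volume form of $Y$. That this scalar has finite order is a global fact about the monodromy, not a consequence of Lie theory.

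\textbf{This non-flat part is exactly what the paper's proof is built to control.}
\begin{enumerate}
\item It passes to a maximally quasi-\'etale cover with locally constant Albanese map $a:X\to A$ and fiber $X_a=F\times Y'$.
\item It shows $H_X^\circ=H_{X_a}^\circ$, and that $H_{X_a}/H_{X_a}^\circ$ is finite. This uses that the holonomy representation factors through $\pi_1(X)$ and that every linear representation of $\pi_1(X_a)$ has finite image.
\item The remaining extension $H_X/H_{X_a}$ is the image of $\rho:\pi_1(A)\to H_X/H_{X_a}$. The key input is the identity $\rho=\Xi\circ M$, where $M:\pi_1(A)\to\Aut(X_a)$ is the monodromy and $\Xi(f)=(\mathrm d f_p)^{-1}\circ P_\gamma$. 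This identity rests on flat lifts of loops of $A$, along which parallel transport of fiber directions equals $\mathrm dg_p$.
\item It then shows that the monodromy acts by isometries of $\omega_a$, that $\Aut(X_a,\omega_a)$ is compact, and that $\Xi$ is continuous. Together these force $H_X/H_{X_a}$ to be finite.
\end{enumerate}
This last step is the singular substitute for Cheeger--Gromoll's use of the compact isometry group of the non-Euclidean factor. Your proposal needs an ingredient of this kind controlling the $\SU$ and $\Sp$ factors. Without it, finiteness of $\Hol_p/\Hol^0_p$ is not established.
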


\noindent
\emph{Strategy of the proof}\\
In the case $c_1(X) = 0$, the existence of a finite quasi-\'{e}tale cover on which the variety splits as $A \times Z$ where $A$ is an abelian variety and $Z$ a variety such that $\widetilde{q}(Z) = 0$ allow the author to study separately the holonomy of each factor, so that it suffices to show that the holonomy of $Z$ is compact. This is managed by topological considerations, since the difference between the total holonomy and the restricted one is encapsulated in the representation $\pi_1(Z_\reg) \to \Hol(Z_\reg)/\Hol(Z_\reg)^\circ$. In our case, the result of \cite{MW21} shows that we do not have such a splitting but a locally constant fibration coming from the Albanese morphism $X \to A$, given by a representation $M : \pi_1(A) \to \Aut(X_a)$. We can study the holonomy of the general fiber $X_a$ of this fibration. However, it is not sufficient to understand the holonomy of the whole $X$ and the main difficulty is to understand the extension $\Hol(X_\reg)/\Hol(X_{a \reg})$. We show that this extension is fully determined by $M$. Moreover, we show that the image of $M$ preserves the induced metric on $X_a$, and thanks to compactness of this group (this is analogous to classical compactness of isometry groups) we deduce the compactness of $\Hol(X_\reg)$.\\

In the structure of klt varieties with semipositive anticanonical bundle, the rationally connected varieties play an important role, since they are positively curved irreducible pieces arising from the holonomy decomposition (see \cite{CDP14}). In the singular setting, rational connectedness can also appear through quotients of K-trivial varieties (see \emph{e.g.} \cite{OT15} for examples of finite quotients of abelian varieties that are rationally connected). In the holonomy decomposition, the varieties with total holonomy $\U(n)$ or non-flat locally symmetric varieties are \emph{hereditarily rationally connected}: for any finite quasi-\'{e}tale morphism $X' \to X$, the variety $X'$ is rationally connected. To conclude, we show a singular structure theorem generalizing the result of \cite{CDP14}.

\begin{theoalph}\label{Decomposition}
	Let $X$ be a projective variety with canonical singularities and such that $-K_X$ semipositive. Then, there exists a finite quasi-\'{e}tale morphism $X' \to X$ such that the Albanese morphism:
	$$
	\mathrm{alb}_{X'} : X' \to \mathrm{Alb}(X')
	$$
	is locally constant and the general fiber is a product $\prod_{j} Y_j \times \prod_{k} S_k \times  \prod_{l} Z_l$, where $Y_j$ are irreducible Calabi-Yau, $S_k$ are irreducible holomorphic symplectic and $Z_l$ are hereditarily rationally connected.
\end{theoalph}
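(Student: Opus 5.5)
We plan to combine the local constancy of the Albanese morphism with a holonomy decomposition of its general fiber. First, by Theorem~\ref{Identification euclidean factor augmented irregularity} and the results of \cite{MW21}, we pass to a finite quasi-\'etale cover on which $q(X) = \widetilde{q}(X) = k$. After replacing $X$ by such a cover, $\mathrm{alb}_X : X \to A := \mathrm{Alb}(X)$ is locally constant, with fiber $F$ and monodromy representation $M : \pi_1(A) \to \Aut(F)$. The fiber $F$ again has canonical singularities, $-K_F$ is semipositive, and $\widetilde{q}(F) = 0$; the flat directions are exactly $\mathrm{alb}^*\mathcal{T}_A$, so $F$ carries no flat factor. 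Local constancy persists under further quasi-\'etale covers of $F$ that are equivariant for (a finite-index subgroup of) $\pi_1(A)$. Hence it suffices to decompose $F$ up to such a cover and then induce the cover on $X$ by the fiber product construction of the suspension.

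Second, we decompose $F$. We endow it with the restricted metric, with $\Ric \geq 0$, and use Theorem~\ref{Compactness of holonomy}: after a quasi-\'etale cover, $\Hol(F_\reg)$ is connected and compact. We then apply the weak holonomy cover construction from the proof of Theorem~\ref{Bound of symmetric tensors}. This gives a splitting
$$\mathcal{T}_F = \bigoplus_i \mathcal{E}_i$$
into parallel, polystable summands, each with irreducible holonomy. Since $\widetilde{q}(F) = 0$, none of the summands is flat.

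To integrate this splitting into a product $F \simeq \prod F_i$, we use the algebraic integrability and product criteria as in \cite{GGK19, Dru18, HP19}. For this we need to know that the foliations are algebraic with klt, or canonical, leaves. Canonicity of $F$ is used here, as in the Calabi--Yau case.

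Each factor is then classified by Berger's list applied to its irreducible holonomy:
\begin{itemize}
\item $\SU(n_i)$ gives an irreducible Calabi--Yau factor $Y_j$, with the properties verified via Bochner and Theorem~\ref{Bound of symmetric tensors} as in \cite{GGK19};
\item $\Sp(n_i/2)$ gives an irreducible holomorphic symplectic factor $S_k$;
\item $\U(n_i)$, or a non-flat locally symmetric factor, is one where $\Ric$ is not identically zero.
\end{itemize}

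Third, for the factors in the last case we show hereditary rational connectedness. For any quasi-\'etale cover $Z' \to Z_l$, the holonomy is unchanged up to a finite group. By Bochner's principle, $H^0(Z', (\Omega^{[p]}_{Z'})^{\otimes m})$ can only contain holonomy invariants. For holonomy $\U(n)$ these are spanned by powers of the volume form, and those vanish because $-K$ is semipositive and not numerically trivial. For irreducible non-flat symmetric factors the same argument applies. We then invoke a criterion of Campana--Demailly--Peternell type, in singular form: a klt variety whose reflexive tensor powers of differentials have no sections, and for which $\Ric \geq 0$, is rationally connected. This follows by considering the MRC quotient and applying \cite{MW21}.

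The main obstacle, I expect, is the integration step: passing from a holonomy splitting of $\mathcal{T}_F$ on $F_\reg$ to an honest product decomposition of a quasi-\'etale cover. In addition, this cover must be compatible with the monodromy $M$. For the compatibility, I would first show that $M(\pi_1(A))$ lies in the isometry group of the fiber metric, which is compact, as in the proof of Theorem~\ref{Compactness of holonomy}. Consequently it permutes the factors $\mathcal{E}_i$ in a way that is trivial on a finite-index subgroup. Replacing $A$ by the corresponding isogenous cover then makes the decomposition $M$-invariant.
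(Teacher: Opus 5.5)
Your reduction matches the paper's: pass to a maximally quasi-\'etale cover, decompose the Albanese fibre $F=X_a$ up to a cover that is equivariant for a finite-index subgroup of $\pi_1(A)$, then rebuild $X'$. However, the two steps that carry the content are left open. \textbf{First, algebraicity of the leaves.} Appealing to the Calabi--Yau case is not enough. \cite{HP19} treats foliations with numerically trivial canonical class, whereas here summands $\mathcal{F}_j$ of $\mathcal{T}_{X_a}$ may have strictly positive slope, and this is exactly where the HRC factors come from. The paper's Lemma~\ref{Decomposition for vanishing augmented irregularity} therefore distinguishes two cases.
\begin{itemize}
\item If $\mu_\omega(\mathcal{F}_j)>0$, the restriction of $\mathcal{F}_j$ to a general complete intersection curve in $X_\reg$ is semistable of positive degree, hence ample, and \cite{BM16} gives algebraic leaves.
\item If $\mu_\omega(\mathcal{F}_j)=0$, one first excludes symmetric type: such a factor would be Ricci-flat by Proposition~\ref{Polystability of vanishing slopes}(i), hence flat, but $\mathcal{F}_0=0$. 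So $G_j^\circ\in\{\U(n_j),\SU(n_j),\Sp(n_j/2)\}$, and Proposition~\ref{Polystability of vanishing slopes}(iii) plus irreducibility of $\Sym^m$ of the standard representation makes every $\Sym^{[m]}\mathcal{F}_j$ stable. Either $\mathcal{F}_j$ is not pseudo-effective and \cite[Prop.~8.4]{Dru18} applies, or \cite[Thm.~1.1]{HP19} forces $c_1\cdot A^{n-1}=c_2\cdot A^{n-2}=0$. In the latter case \cite{LT18} makes $\mathcal{F}_j$ flat, contradicting $\widetilde q(X_a)=0$.
\end{itemize}
Canonicity enters through a $\Q$-factorial terminalization \cite{BCHM10}, which supplies the smoothness in codimension two that \cite{HP19} requires. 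None of this is in your proposal, and without it there is no product decomposition to feed into \cite{DGP24, Dru18}.

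\textbf{Second, compatibility with $M$.} Making the tangent splitting $M$-invariant is not the real issue. It is automatic if you take the covers on $X$ rather than on $F$. After passing to the holonomy cover of $X$ (Theorem~\ref{Compactness of holonomy}), the inclusions $H_{X_a}^\circ=H_X^\circ\subseteq H_{X_a}\subseteq H_X$ make $H_{X_a}$ connected. The global parallel splitting $\mathcal{T}_X=a^*\mathcal{T}_A\oplus\bigoplus_j\mathcal{F}_j$ then restricts to the fibre, and the monodromy, being parallel transport along flat lifts, preserves each $\mathcal{F}_{j|X_a}$. What you actually need is that a finite-index subgroup of $\pi_1(A)$ acts on the splitting cover $\widetilde F\to F$ by lifts of $M$, and that these lifts form a group, not merely an extension by deck transformations. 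Your covers of $F$ (its holonomy cover, Druel's splitting cover) have no reason to be stable under the monodromy. The paper settles this with Lemma~\ref{Group argument}: the subgroup $G\le\pi_1(X_\reg)$ of $\widetilde F\times\C^q\to X$ equals $H\cap\pi_1(X_{a,\reg})$ for some finite-index $H\le\pi_1(X_\reg)$. The proof uses finite generation of $\pi_1(X_{a,\reg})$, so that the normal core of $G$ in $\pi_1(X_\reg)$ still has finite index, and residual finiteness of the resulting finite-by-$\Z^{2q}$ quotient. The cover defined by $H$ then has Albanese isogenous to $A$ and fibre $\widetilde F$. Your compactness of $\Aut(X_a,\omega_a)$ could replace the lifting half: monodromy elements in the identity component of the closure are isotopic to the identity, hence lift to Galois quasi-\'etale covers. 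Splitting the deck-group extension still needs the residual-finiteness argument. (Minor: the $\U(n)$-invariants in covariant tensors vanish because the centre acts by a nontrivial character, not because they are spanned by powers of the volume form. Your HRC argument via the MRC quotient and \cite{MW21} is otherwise close to Proposition~\ref{Link holonomy HRC}.)
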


\noindent
\emph{Strategy of the proof}\\
The main issue to adapt arguments from \cite{CDP14} is the lack of singular version of de Rham decomposition theorem. However, one can implement the strategy of the singular Beauville--Bogomolov decomposition theorem:
\begin{itemize}
    \item Show that $\mathcal{T}_X$ can be decomposed in a direct sum of foliations by the holonomy \cite{GGK19},
    \item Show that these holonomy foliations have algebraic leaves \cite{HP19},
    \item Show that decomposition of the tangent sheaf into algebraic integrable foliations induced a decomposition on a quasi-\'{e}tale cover \cite{Dru18}.
\end{itemize}
We implement this strategy to show that the decomposition can be achieved for the general fiber of the Albanese fibration of $X$. We show the first two points by using our Bochner's principle and considerations about the slopes and the pseudo-effectivity of sheaves  \cite{HP19, DGP24}. In order to get rid of the smoothness assumption in codimension 2 of \cite{HP19}, we use the existence of a $\Q$-factorial terminalization ensured by \cite{BCHM10}. Hence, we use Druel's integrability result \cite{Dru18} to show that the fiber can split on a quasi-\'{e}tale cover. It remains to show that there exists a finite quasi-\'{e}tale morphism above $X$ realizing this splitting of the fiber, this can be achieved by using group-theoretic arguments and the special structure of the fundamental group $\pi_1(X_\reg)$ coming from the local triviality of the Albanese map.\\

We conclude by considerations on fundamental groups of the regular locus of such varieties. Motivated by a singular version of Cheeger--Gromoll theorem, the recent results of \cite{FGSW26} prove that singular ICY and IHS varieties have a regular locus with finite fundamental groups. Using this we show that the varieties with semipositive anticanonical bundle are virtually abelian. 

\begin{theoalph}\label{Virtually abelian}
	Let $X$ be a projective variety with canonical singularities such that $-K_X$ semipositive. Then, $\pi_1(X_\reg)$ is virtually abelian.
\end{theoalph}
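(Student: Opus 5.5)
The plan is to reduce, via Theorem~\ref{Decomposition}, to a fibration whose base has abelian fundamental group and whose fibre has regular locus with finite fundamental group.

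\textbf{Step 1: reduction to a quasi-\'etale cover.} Virtual abelianness is insensitive to passing to finite index subgroups and finite quotients. For a finite quasi-\'etale morphism $X' \to X$, the restriction over $X_\reg$ is \'etale, after removing a codimension $\geq 2$ subset of $X'_\reg$, which does not change $\pi_1$. Hence $\pi_1(X'_\reg)$ is isomorphic to a finite index subgroup of $\pi_1(X_\reg)$. It therefore suffices to prove the statement for the cover $X'$ given by Theorem~\ref{Decomposition}. We may further assume that the general fibre $F$ of $\mathrm{alb}_{X'}$ is the product $\prod_j Y_j \times \prod_k S_k \times \prod_l Z_l$.

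\textbf{Step 2: the fibration exact sequence.} The Albanese map is locally constant, given by $M:\pi_1(A)\to \Aut(F)$ with $A=\mathrm{Alb}(X')$. Thus $X' = (\widetilde{A}\times F)/\pi_1(A)$, and since $A$ is smooth, $X'_\reg = (\widetilde{A}\times F_\reg)/\pi_1(A)$. This is a locally trivial fibre bundle over $A$ with fibre $F_\reg$, and the homotopy sequence gives
$$
\pi_1(F_\reg) \to \pi_1(X'_\reg) \to \pi_1(A)\cong \Z^{2q} \to 1.
$$
Moreover $F_\reg=\prod Y_{j,\reg}\times\prod S_{k,\reg}\times \prod Z_{l,\reg}$. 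By \cite{FGSW26}, the groups $\pi_1(Y_{j,\reg})$ and $\pi_1(S_{k,\reg})$ are finite.

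\textbf{Step 3: the rationally connected factors (main obstacle).} I expect the main difficulty to be showing that $\pi_1(Z_{l,\reg})$ is finite. Hereditary rational connectedness only yields the finiteness of $\widehat{\pi}_1(Z_{l,\reg})$, through quasi-\'etale covers and the fact that rationally connected klt varieties are simply connected. To get the finiteness of the group itself, I would use the holonomy description: $Z_l$ has holonomy $\U(n)$, or is a non-flat locally symmetric variety. My first attempt would be a Cheeger--Gromoll/Myers-type argument. The Ricci curvature is strictly positive in some direction, and Theorem~\ref{Compactness of holonomy} together with Theorem~\ref{Identification euclidean factor augmented irregularity} gives $\widetilde{q}(Z_l)=0$. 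From this one would show that $\pi_1(Z_{l,\reg})$ is finite, or at least residually finite with finite profinite completion, hence finite. A fallback is to obtain that $\pi_1(Z_{l,\reg})$ is a quotient of the local fundamental groups, in the manner of Braun's result for klt Fano-type varieties, since $Z_l$ has canonical singularities and is of Fano type up to the holonomy analysis.

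\textbf{Step 4: conclusion.} With $K=\operatorname{im}(\pi_1(F_\reg)\to\pi_1(X'_\reg))$ finite, $G=\pi_1(X'_\reg)$ is an extension of $\Z^{2q}$ by the finite normal subgroup $K$. Conjugation gives $G\to \Aut(K)$, whose kernel $C$ has finite index and centralizes $K$. Then $C$ is a central extension of a finite index subgroup $\Lambda\cong\Z^{2q}$ by $K\cap C$. Since $K \cap C$ is finite, commutators in $C$ define a bilinear pairing $\Lambda\times\Lambda\to K\cap C$. Passing to $m\Lambda$ with $m=|K|$ kills this pairing, so the preimage of $m\Lambda$ is abelian. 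It has finite index in $G$, hence $\pi_1(X_\reg)$ is virtually abelian.
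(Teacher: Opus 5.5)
Your Steps 1, 2 and 4 are fine; Step 4 is a correct, self-contained substitute for the paper's residual-finiteness argument. The gap is Step 3, which carries the real content of the theorem, and none of the routes you sketch closes it.

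\begin{itemize}
\item \emph{Myers/Cheeger--Gromoll.} Myers' theorem needs completeness and a uniform bound $\Ric\omega\geq\delta\omega$ with $\delta>0$, and Cheeger--Gromoll needs completeness. Here $(Z_{l,\reg},\omega)$ is incomplete and $\Ric\omega=\alpha$ is only semipositive. So ``strictly positive in some direction'' yields nothing, and you supply no singular replacement; the paper only has such input, via \cite{FGSW26}, for the Ricci-flat ICY and IHS factors.
\item \emph{Profinite route.} This is circular. Hereditary rational connectedness does give finiteness of $\widehat{\pi}_1(Z_{l,\reg})$. But upgrading this to finiteness of $\pi_1(Z_{l,\reg})$ requires residual finiteness of that group, which you have no way to establish.
\item \emph{Braun-type fallback.} This needs $Z_l$ to be of Fano type, which fails in general. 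Fano type forces $-K$ to be big. The blow-up of $\mathbb{P}^2$ at the nine base points of a general cubic pencil is HRC, has $-K$ semiample (pulled back from $\mathbb{P}^1$, hence semipositive) and irreducible holonomy $\U(2)$, yet $(-K)^2=0$.
\end{itemize}

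What is missing is a mechanism that turns semipositivity of $-K$ together with irreducibility of the holonomy into finiteness of $\pi_1$ of the regular locus. The paper gets this from \cite{JLR26}:
\begin{itemize}
\item If $Z_\reg$ contains a $1$-free curve, then $\pi_1(Z_\reg)$ is finite.
\item When $-K_Z$ is nef, such a curve exists unless $\mathcal{T}_Z$ has a positive-rank quotient $\mathcal{Q}$ with $c_1(\mathcal{Q})=0$.
\end{itemize}
Such a quotient is excluded by the Bochner principle. The dual $\mathcal{Q}^\vee\subseteq\Omega^{[1]}_Z$ has vanishing slope, so it is parallel by Proposition~\ref{Polystability of vanishing slopes}. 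Irreducibility of the holonomy then forces $\mathcal{Q}^\vee$ to have full rank. Hence $c_1(Z)\cdot[H_0]^{n-1}=0$, which gives $\alpha=0$, i.e.\ $\omega$ is Ricci-flat. This is incompatible with holonomy $\U(n)$ or with being non-flat locally symmetric, as in Proposition~\ref{Link holonomy HRC}.
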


\noindent
\emph{Strategy of the proof}\\
From Theorem~\ref{Decomposition} and the aforementioned result, it suffices to show that hereditarily rationally connected pieces have finite fundamental groups. In the smooth case, this can be proved through the existence of a very free curve. If $X$ is such a variety, the authors of \cite{JLR26} show that the existence of a very free curve implies the finiteness of $\pi_1(X_\reg)$. Moreover, they give a criterion for the existence of such a curve, based on positivity properties of tangent sheaves. We show that this criterion is satisfied thanks to the positivity assumption on $-K_X$.

\begin{ackn}
    The author would like to thank Henri Guenancia for its constant support through fruitful discussions.
\end{ackn}
\begin{aid}
	The author used ChatGPT 5.5 Instant to familiarize with finite-index subgroups arguments to show Lemma~\ref{Group argument}. This use had not relevant impact on the present paper: the mathematical content and the exposition are due to the author.
\end{aid}

\section{Bochner principle}

\subsection{Setup and Notation}

Let $X$ be a klt projective variety of dimension $n$, we denote by $H_0$ an ample divisor on $X$. Hence, $K_X$ is $\Q$-Cartier, so that there exists an natural number $m$ such that $mK_X$ is Cartier. We assume that $-K_X$ is semi-positive, that means that there exists a smooth metric $h_{K_X}^m$ on $mK_X$ such that the smooth $(1,1)$-form:
$$
\alpha := \frac{1}{m} \textbf{i}\Theta(-m K_X, {h_{K_X}^m}^\vee) \in \left[-K_X \right]$$
is semi-positive. Let $\theta$ be a Kähler $(1,1)$-form representing $\left[ H_0 \right]$ and we denote by $\mu_{h_{K_X}^m}$ the adapted volume. From \cite[Thm.~6.3]{EGZ09}, there exists a unique potential $\Phi \in L^\infty(X)$ such that:
$$
\left( \theta + \ddc \Phi \right)^n = c \mu_{h_{K_X}^m}
$$
where $c := \frac{\left[ H_0 \right]^n}{\mu_{h_{K_X}^m}(X)}$.
We denote in the following $\omega := \theta + \ddc \Phi$, we have $\Ric \omega = \alpha$.

 On the regular locus $X_\reg$ of $X$, the $(1,1)$-form $\omega$ provides us with a K\"{a}hler metric, whose the associated hermitian metric will be denoted $h$ and the associated Chern connection is denoted $D_\omega$.

If $\beta$ is any $(0,1)$-form on $X_\reg$, we can construct $\sharp \beta \in \mathcal{C}^\infty(X_\reg, \mathcal{T}_{X_\reg})$ as the unique element such that we have for any element $v \in T^{0,1}X$:
$$
h(\sharp \beta, v) = \beta(v),
$$ 
and extend it to $(1,1)$-forms. In this way, for any $(1,1)$-form $\gamma$, the previous construction provides us with $\sharp \gamma$, a smooth section of $\End(\mathcal{T}_X)$ satsfying the following equality:
$$
n \gamma \wedge \omega^{n-1} = \tr_\End(\sharp \gamma) \omega^n.
$$

If $V$ is a vector space and $f : V \to V$ is an endomorphism, we denote $f^{\Sym^m} : \Sym^m V \to  \Sym^m V$ the endormophism defined on tensors by: 
$$
f^{\Sym^m}(v_1, \otimes \cdots \otimes v_m) = \sum_{1 \leq k \leq m} v_1 \otimes \cdots v_{k-1} \otimes f(v_k) \otimes v_{k+1} \otimes \cdots \otimes v_m.
$$
One can observe that: $\tr(f^{\Sym^m}) = \binom{n+m-1}{n} \tr(f)$. This operator is useful to express the curvature of metrics induced on symmetric powers of a bundle.

\subsection{Bochner principle for symmetric differentials}

In this section we show the following Bochner's principle for proper subsheaves of vanishing slope.

\begin{prop}\label{Polystability of vanishing slopes}
    Let $X$ be a klt projective variety such that $-K_X$ is semipositive. Let $\mathcal{F}$ be a proper subsheaf of $\Sym^{[m]} \Omega^{[1]}_X$ such that $\mu_{\omega}(\mathcal{F}) = 0$. Then,\\
	$(i)$ the sheaf $\mathcal{F}$ is parallel with respect to the connection induced by $\omega$, and $$\mathrm{pr}_{\mathcal{F}}(\overline{\sharp \alpha}^{\Sym^m}_{|\mathcal{F}}) = 0,$$
	$(ii)$ there exists a subsheaf $\mathcal{G}$ of $\Sym^{[m]}\Omega^{[1]}_X$ such that $\Sym^{[m]}\Omega^{[1]}_X = \mathcal{F} \oplus \mathcal{G}$, and the decomposition is orthogonal with respect to $\omega$,\\
	$(iii)$ $\mathcal{F}$ is polystable with respect to $\left[ \omega \right]$.
\end{prop}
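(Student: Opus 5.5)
The proof follows the classical Bochner/Kobayashi argument for a subbundle of a Hermitian bundle on the regular locus. The missing integrations by parts across $X_{\mathrm{sing}}$ are justified by approximation on a resolution. Let $E:=\Sym^{[m]}\Omega^{[1]}_X$. On $X_\reg$ it carries the metric $h^{\Sym^m}$ induced from $\omega$ and the Chern connection $D$. Its mean curvature is
$$
\mathrm{i}\Lambda_\omega\Theta(E) = -\overline{\sharp\alpha}^{\Sym^m},
$$
because $\Ric\omega=\alpha$ and $E$ is built from the dual of $T_X$. Hence the mean curvature of $E$ is seminegative.

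Let $\mathcal{F}\subset E$ be saturated (we may saturate, since this does not decrease the slope), with $\mathrm{pr}_{\mathcal{F}}$ the orthogonal projection and $\beta\in\mathcal{C}^\infty(\Omega^{1,0}\otimes\mathrm{Hom}(\mathcal F,\mathcal F^\perp))$ the second fundamental form on the locus $X^\circ\subset X_\reg$ where $\mathcal{F}$ is a subbundle. The Chern--Weil/Gauss--Codazzi formula gives
$$
\mathrm{i}\Theta(\mathcal F,h_{|\mathcal F})=\mathrm{pr}_{\mathcal F}\,\mathrm{i}\Theta(E)_{|\mathcal F}-\mathrm{i}\beta^*\wedge\beta .
$$
Wedging with $\omega^{n-1}$, taking traces and integrating over $X^\circ$ yields
$$
\int_{X^\circ} c_1(\mathcal F,h)\wedge\omega^{n-1} = -\frac1n\int_{X^\circ}\tr\big(\mathrm{pr}_{\mathcal F}\overline{\sharp\alpha}^{\Sym^m}_{|\mathcal F}\big)\omega^n-\frac1n\int_{X^\circ}|\beta|^2\omega^n.
$$
Both terms on the right are $\le 0$, since $\overline{\sharp\alpha}^{\Sym^m}$ is a semipositive Hermitian endomorphism.

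The main obstacle is the identification of the left-hand side with $n\,\mu_\omega(\mathcal F)\cdot\mathrm{rk}\,\mathcal F$, i.e. with $c_1(\mathcal F)\cdot[\omega]^{n-1}$. This fails a priori, because $\omega$ is singular along $X_{\mathrm{sing}}$ and $\mathcal F$ is singular along a codimension-$2$ set. To handle it, pass to a log resolution $\pi:\widehat X\to X$ and use the approximating metrics
$$
\omega_\varepsilon=\pi^*\theta+\varepsilon\widehat\theta+\ddc\Phi_\varepsilon,\qquad \Ric\omega_\varepsilon\ge -\delta_\varepsilon\,\omega_\varepsilon+(\text{controlled terms}),
$$
solving the regularized Monge--Amp\`ere equations as in \cite{Gue16, GGK19, DGP24}. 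Then $\omega_\varepsilon\to\omega$ smoothly on compacts of $X_\reg$ with uniform volume bounds. On $\widehat X$ the same inequality holds for the pullback of $\mathcal F$ inside a reflexive model of $E$, with an error term $\to0$. Here the Chern forms are honest and integrate to $c_1(\mathcal F)\cdot[\pi^*\theta+\varepsilon\widehat\theta]^{n-1}\to c_1(\mathcal F)\cdot[\omega]^{n-1}$. Combining Fatou's lemma on compacts of $X^\circ$ with the hypothesis $\mu_\omega(\mathcal F)=0$ forces both nonpositive terms to vanish on $X^\circ$:
$$
\beta\equiv0,\qquad \tr\big(\mathrm{pr}_{\mathcal F}\overline{\sharp\alpha}^{\Sym^m}_{|\mathcal F}\big)\equiv0 .
$$
Since $\mathrm{pr}_{\mathcal F}\overline{\sharp\alpha}^{\Sym^m}_{|\mathcal F}$ is a semipositive Hermitian endomorphism, vanishing of its trace gives vanishing of the endomorphism itself. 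Vanishing of $\beta$ means $\mathcal F$ is $D$-parallel on $X^\circ$. A parallel subbundle has constant rank and extends across the codimension-$\ge2$ complement in $X_\reg$ by parallel transport, which gives $(i)$.

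For $(ii)$, let $\mathcal G$ be the orthogonal complement $\mathcal F^{\perp}$ on $X_\reg$. It is $D$-parallel because $D$ is metric, hence holomorphic, since its $(0,1)$-part is $\bar\partial$. Let $\mathcal G$ denote the reflexive extension of this subbundle via $j_*$, with $j:X_\reg\hookrightarrow X$. Then $E=\mathcal F\oplus\mathcal G$ holds on $X_\reg$, hence everywhere by reflexivity, and the sum is orthogonal by construction.

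For $(iii)$, argue by induction on the rank. Let $\mathcal F'\subset\mathcal F$ be any subsheaf. Since $\mathcal F$ is parallel, $\mathcal F'$ is also a subsheaf of $E$, so the inequality above gives $\mu_\omega(\mathcal F')\le0=\mu_\omega(\mathcal F)$; thus $\mathcal F$ is semistable. If equality holds, the argument above applied to $\mathcal F'\subset E$ shows $\mathcal F'$ is parallel. Its orthogonal complement inside $\mathcal F$ is then a parallel direct summand of $\mathcal F$ of slope $0$. Iterating on ranks, $\mathcal F$ is a direct sum of stable sheaves of slope $0$, i.e. polystable.
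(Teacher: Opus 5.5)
Your route is the paper's: Gauss--Codazzi for the saturation $\widehat{\mathcal F}$ of $\mathcal F$ in $\Sym^m\Omega^1_{\widehat X}$ on a resolution, then Kobayashi's identity for $\int_{\widehat U}c_1(\widehat{\mathcal F},h_{t,\varepsilon})\wedge\omega_{t,\varepsilon}^{n-1}$, then Fatou on compacts of the regular locus. The reflexive extension, the trace-positivity argument and the induction on the rank are fine. The gap is exactly at the step you call the main obstacle. You dispose of it by asserting that on $\widehat X$ ``the same inequality holds with an error term $\to 0$''. You describe the Ricci curvature of the regularized metrics only as ``$\ge-\delta_\varepsilon\omega_\varepsilon+(\text{controlled terms})$''.

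That error term is the whole singular content of the proposition, and it is not pointwise controlled. For $\omega_{t,\varepsilon}$ solving $(\pi^*\theta+t\widehat\omega+\ddc\varphi_{t,\varepsilon})^n=c_{t,\varepsilon}\prod_i(|\sigma_i|^2+\varepsilon^2)^{a_i}\mathrm{d}V$ one has exactly $\Ric\omega_{t,\varepsilon}=\pi^*\alpha-\sum_i a_i\theta_{i,\varepsilon}$. When $a_i>0$, the piece $-a_i\beta_{i,\varepsilon}$, with $\beta_{i,\varepsilon}=\varepsilon^2|D\sigma_i|^2/(|\sigma_i|^2+\varepsilon^2)^2$, is of order $\varepsilon^{-2}$ on an $\varepsilon$-neighbourhood of $E_i$. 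Nothing in the construction bounds it below by $-\delta_\varepsilon\omega_{t,\varepsilon}$. So you need an integral argument showing that its compression to $\widehat{\mathcal F}$ becomes negligible, and the proposal gives none.

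The missing argument is Step~3 of the paper.
\begin{itemize}
\item Split $\theta_{i,\varepsilon}=\gamma_{i,\varepsilon}+\beta_{i,\varepsilon}$ with $\gamma_{i,\varepsilon}=\frac{\varepsilon^2}{|\sigma_i|^2+\varepsilon^2}\theta_i$. Then $\pm\gamma_{i,\varepsilon}\le C\frac{\varepsilon^2}{|\sigma_i|^2+\varepsilon^2}\widehat\omega$ and $\beta_{i,\varepsilon}\ge 0$.
\item The operations $\sharp$, conjugation, $\bullet^{\Sym^m}$ and compression $\mathrm{pr}_{\widehat{\mathcal F}}\bullet_{|\widehat{\mathcal F}}$ preserve semipositivity. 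Hence $|\tr(\mathrm{pr}_{\widehat{\mathcal F}}\overline{\sharp\theta_{i,\varepsilon}}^{\Sym^m}_{|\widehat{\mathcal F}})|$ is bounded, independently of $\widehat{\mathcal F}$, by a constant times $\tr\sharp\beta_{i,\varepsilon}+\frac{\varepsilon^2}{|\sigma_i|^2+\varepsilon^2}\tr\sharp\widehat\omega$.
\item Semipositivity of $\beta_{i,\varepsilon}$ is indispensable here. Only the full trace of $\sharp\theta_{i,\varepsilon}$ is cohomological, and the compression of an indefinite form is not controlled by it.
\item Integrate against $\omega_{t,\varepsilon}^n$. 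The second term tends to $0$ by \cite[Claim~9.5]{GGK19}.
\item The first term is $\int\beta_{i,\varepsilon}\wedge\omega_{t,\varepsilon}^{n-1}=E_i\cdot[\pi^*\theta+t\widehat\omega]^{n-1}-\int\gamma_{i,\varepsilon}\wedge\omega_{t,\varepsilon}^{n-1}$. This tends to $0$ because $\theta_{i,\varepsilon}$ represents $c_1(E_i)$ and $E_i$ is $\pi$-exceptional.
\end{itemize}
With this estimate your Fatou step goes through as written, since the remaining integrand $-\tr(P_{t,\varepsilon}+\mathrm{pr}_{\widehat{\mathcal F}}\overline{\sharp\pi^*\alpha}^{\Sym^m}_{|\widehat{\mathcal F}})$ is $\le 0$.

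A smaller point: the Chern forms on $\widehat X$ are not ``honest'', because $\widehat{\mathcal F}$ is a subbundle only off codimension two. You still need Kobayashi's $L^2$ argument for the second fundamental form there. You should also write $c_1(\widehat{\mathcal F})$ rather than $c_1(\mathcal F)$, and get $c_1(\widehat{\mathcal F})\cdot[\pi^*\omega]^{n-1}=c_1(\mathcal F)\cdot[\omega]^{n-1}$ from the projection formula.
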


\begin{proof}

\noindent
\textbf{Step 1 : Approximated metrics on a resolution}\\
Let $\pi : \widehat{X} \to X$ be a resolution of $X$, so that:
$$
K_{\widehat{X}} = \pi^*K_X + \sum_{i} a_i E_i
$$
with $a_i > -1$. Let $\widehat{\omega}$ be a K\"{a}hler metric on $\widehat{X}$. We denote $E := \sum_i E_i$ the reduced exceptional divisor. On the line bundle $\mathcal{O}(E_i)$, we consider a hermitian metric $h_i$ and a holomorphic section $\sigma_i$ such that $\mathrm{div}\, \sigma_i = E_i$, and we denote $\theta_i := \textbf{i}\Theta(E_i, h_i)$. Hence, one can define a volume form $\mathrm{d}V$ on $\widehat{X}$ such that;
\begin{equation}\label{Ricci of dV}
	\Ric \mathrm{d}V = \pi^*\alpha - \sum_i a_i \theta_i.
\end{equation}

Then, by application of Yau theorem, we can consider the potentials $\varphi_{t, \varepsilon} \in \mathcal{C}^\infty(\widehat{X}) \cap \mathrm{PSH}\,(\widehat{X}, \pi^*\theta + t\widehat{\omega})$:
\begin{equation}\label{Perturbed Monge--Ampere}
    (\pi^*\theta + t \widehat{\omega} + \ddc \varphi_{t, \varepsilon})^n = c_{t, \varepsilon} \prod_i (|\sigma_i|^2 + \varepsilon^2)^{a_i} \mathrm{d}V,
\end{equation}
where $c_{t, \varepsilon} := \frac{\left[ \pi^*\theta + t\widehat{\omega} \right]^n}{\int_{\widehat{X}} \prod_i (|\sigma_i| + \varepsilon^2)^{a_i} \mathrm{d}V }$ is a normalizing constant.
When $t, \varepsilon$ go to zero, the family $(\varphi_{t, \varepsilon})_{t, \varepsilon}$ converges weakly to $\varphi := \pi^*\Phi$ and locally smoothly on $\widehat{X} \backslash |E|$.
We denote in the following $\omega_{t, \varepsilon} := \pi^*\theta + t \widehat{\omega} + \ddc \varphi_{t, \varepsilon}$ and $h_{t, \varepsilon}$ the associated metric on $T_{\widehat{X}}$. On the line bundles $\mathcal{O}(E_i)$ and $\mathcal{O}(E)$, there exists also approximated metrics:
$$
h_{i, \varepsilon} := (|\sigma_i|^2 + \varepsilon^2)^{-1} h_i, \qquad h_{E, \varepsilon} := \prod_i h_{i, \varepsilon}.
$$
Hence, the curvature tensor associated to $h_{i, \varepsilon}$ is:
\begin{align}
    \theta_{i, \varepsilon} & := \textbf{i}\Theta(E_i, h_{i, \varepsilon}) = \ddc \log \left( |\sigma_i|^2 + \varepsilon^2 \right) + \textbf{i}\Theta(E_i, h_i)\\
    & = \underbrace{\frac{\varepsilon^2}{|\sigma_i|^2 + \varepsilon^2} \theta_i}_{=: \gamma_{i, \varepsilon}} + \underbrace{\frac{\varepsilon^2  |D\sigma_i|^2}{(|\sigma_i|^2 + \varepsilon^2)^2}}_{=: \beta_{i, \varepsilon}}.\label{Curvature of perturbed metrics on E_i}
\end{align}
We deduce from \eqref{Ricci of dV} and \eqref{Perturbed Monge--Ampere}:
\begin{equation}\label{Ricci of perturbed metrics}
	\Ric \omega_{t, \varepsilon} = \pi^*\alpha - \sum_i a_i \theta_{i, \varepsilon}.
\end{equation}

\noindent
\textbf{Step 2 : Curvature of the subsheaf}\\
Let $\mathcal{F}$ be a subsheaf of $\Sym^{[m]}\Omega^{[1]}_X$ of rank $r$. Up to consider its saturation, let us assume that $\mathcal{F}$ is saturated. In the previous step, we have constructed approximated metrics $\omega_{t, \varepsilon}$ on a resolution $\pi : \widehat{X} \to X$. Then, one can define $\widehat{\mathcal{F}}$ as the saturation of $\mathcal{F}$ in $\Omega^1_{\widehat{X}}$, so that we have:
$$
c_1(\widehat{\mathcal{F}}) \cdot \left[ \pi^*\omega \right]^{n-1} = c_1(\mathcal{F}) \cdot \left[ \omega \right]^{n-1}
$$ (see the proof of \cite[Lemma.~4.3]{Dai26} for the construction of $\widehat{\mathcal{F}}$ and the equality of slopes). Now, there exists a big open subset $\widehat{U}$ such that $\widehat{\mathcal{F}}_{|\widehat{U}}$ is locally free and it is a subbundle of $\Omega^1_{\widehat{X}}$. We have by \cite[Proof of $(\ast\ast)$, p.166]{Kob80} that:
$$
c_1(\widehat{\mathcal{F}}) \cdot \left[ \omega_{t, \varepsilon} \right]^{n-1} = \int_{\widehat{U}} c_1(\widehat{\mathcal{F}}, h_{t, \varepsilon}) \wedge \omega_{t, \varepsilon}^{n-1}.
$$
We have above $\widehat{U}$:
\begin{align}
     - \frac{1}{n} \overline{\sharp \Ric \omega_{t, \varepsilon}}^{\Sym^m} \omega_{t, \varepsilon}^n & = \textbf{i} \Theta(\Omega^1_{\widehat{X}}, h_{t,\varepsilon}^\vee)^{\Sym^m} \wedge \omega_{t, \varepsilon}^{n-1} \\
     & = 
\begin{bmatrix}
    \textbf{i} \Theta(\widehat{\mathcal{F}}, \Sym^m h^\vee_{t, \varepsilon |\widehat{\mathcal{F}}}) \wedge \omega_{t, \varepsilon}^{n-1} - \beta_{t, \varepsilon}^* \wedge \beta_{t, \varepsilon} \wedge \omega_{t, \varepsilon}^{n-1} & \ast \\
    \ast & \ast 
\end{bmatrix}
\end{align}
where $\beta_{t, \varepsilon}$ is the second fundamental form associated to the subbundle $\widehat{\mathcal{F}}$. We write:
$$
\textbf{i} \Theta(\widehat{\mathcal{F}}, \Sym^m h^\vee_{t, \varepsilon |\widehat{\mathcal{F}}}) \wedge \omega_{t, \varepsilon}^{n-1} = A_{t, \varepsilon} \omega_{t, \varepsilon}^n \qquad \text{and} \qquad -\beta_{t, \varepsilon}^* \wedge \beta_{t, \varepsilon} \wedge \omega_{t, \varepsilon}^{n-1} = P_{t, \varepsilon} \omega_{t, \varepsilon}^{n},
$$
where $A_{t, \varepsilon}, P_{t, \varepsilon}$ are symmetric endomoprhisms with $P_{t, \varepsilon}$ semi-positive. Using \eqref{Ricci of perturbed metrics}, we deduce that:
$$
A_{t, \varepsilon} + P_{t, \varepsilon} = \frac{1}{n} \left( - \mathrm{pr}_{\widehat{\mathcal{F}}} \overline{\sharp \pi^*\alpha}^{\Sym^m}_{| \widehat{\mathcal{F}}} + \sum_i a_i \mathrm{pr}_{\widehat{\mathcal{F}}} \overline{\sharp \theta_{i, \varepsilon}}^{\Sym^m}_{| \widehat{\mathcal{F}}} \right).
$$

\noindent
\textbf{Step 3 : Control of error terms}\\
The goal of this step is to show the convergence:
$$
\int_{\widehat{X}} \tr_\End(\mathrm{pr}_{\widehat{\mathcal{F}}} \overline{\sharp \theta_{i, \varepsilon}}^{\Sym^m}_{| \widehat{\mathcal{F}}}) \omega_{t, \varepsilon}^n \underset{t, \varepsilon \to 0}{\longrightarrow} 0
$$

We recall from \eqref{Curvature of perturbed metrics on E_i} that $\theta_{i, \varepsilon} = \gamma_{i, \varepsilon} + \beta_{i, \varepsilon}$. There exists $C > 0$ such that $\pm \theta_i \leq C \widehat{\omega}$. Hence, because the operators $\sharp \bullet$, $\overline{\bullet}$ $\bullet^{\Sym^m}$ and $\mathrm{pr}_{\widehat{\mathcal{F}}} \bullet_{|\widehat{\mathcal{F}}}$ preserve positivity, the following holds:
\begin{align}
    \pm \tr_\End \left(\mathrm{pr}_{\widehat{\mathcal{F}}} \overline{\sharp \gamma_{i, \varepsilon}}^{\Sym^m}_{|\widehat{\mathcal{F}}} \right)
    & \leq \frac{C \varepsilon^2}{|\sigma_i|^2 + \varepsilon^2} \tr_\End \left( \mathrm{pr}_{\widehat{\mathcal{F}}} \overline{\sharp \widehat{\omega}}^{\Sym^m}_{|\widehat{\mathcal{F}}} \right)\\
    & \leq \frac{C \varepsilon^2}{|\sigma_i|^2 + \varepsilon^2} \tr_\End \left( \overline{\sharp \widehat{\omega}}^{\Sym^m} \right) \\
    & \leq C \binom{n+m-1}{n} \frac{\varepsilon^2}{|\sigma_i|^2 + \varepsilon^2} \tr_\End \left( \sharp \widehat{\omega} \right).\label{Bound on gamma}
\end{align}
Moreover, we have:
\begin{align}
    0 \leq \tr_\End( \mathrm{pr}_{\widehat{\mathcal{F}}} \overline{\sharp \beta_{i, \varepsilon}}^{\Sym^m}_{|\widehat{\mathcal{F}}} )
    & \leq \tr_\End( \overline{\sharp \beta_{i, \varepsilon}}^{\Sym^m} ) \\
    & \leq \binom{n+m-1}{n}  \tr_\End \left( \sharp \beta_{i, \varepsilon} \right).\label{Bound on beta}
\end{align}
It follows that:
\begin{align}
    \left| \int_{\widehat{X}} \tr_\End(\mathrm{pr}_{\widehat{\mathcal{F}}} \overline{\sharp \theta_{i, \varepsilon}}^{\Sym^m}_{| \widehat{\mathcal{F}}}) \omega_{t, \varepsilon}^n \right| \leq C'_{n,m} \int_{\widehat{X}} \frac{\varepsilon^2}{|\sigma_i|^2 + \varepsilon^2} \widehat{\omega} & \wedge \omega_{t, \varepsilon}^{n-1} \\
    & + C_{n,m} \int_{\widehat{X}} \beta_{i, \varepsilon} \wedge \omega_{t, \varepsilon}^{n-1},
\end{align}
where $C_{n,m} = \frac{1}{n}\binom{n+m-1}{n}$ and $C'_{n,m} = C_{n,m}C$. It remains to show that these integrals go to zero when $t, \varepsilon$ go to zero. We have:
\begin{itemize}
    \item $\int_{\widehat{X}} \frac{\varepsilon^2}{|\sigma_i|^2 + \varepsilon^2} \widehat{\omega} \wedge \omega_{t, \varepsilon}^{n-1} \to 0$ by \cite[Claim~9.5]{GGK19},\\
    \item $\int_{\widehat{X}} \beta_{i, \varepsilon} \wedge \omega_{t, \varepsilon}^{n-1} = \int_{\widehat{X}} \theta_{i, \varepsilon} \wedge \omega_{t, \varepsilon}^{n-1} - \int_{\widehat{X}} \gamma_{i, \varepsilon} \wedge \omega_{t, \varepsilon}^{n-1}$
    and the first integral in the right hand side converge to zero since it is equal to $\left[ E_i \right] \cdot \left[ \pi^*\theta + t\widehat{\omega} \right]$ and $E_i$ is $\pi$-exceptional, and the second integral go to zero by \eqref{Bound on gamma} and the first point with $m = 1$.
\end{itemize}

\noindent
\textbf{Step 4 : Conclusion}\\
By Steps 2 and 3 and Fatou lemma, we have:
\begin{align*}
    \mu_\omega(\mathcal{F}) & = \frac{1}{r}\limsup_{t, \varepsilon \to 0} \int_{\widehat{U}} \tr_\End(A_{t, \varepsilon}) \omega_{t, \varepsilon}^{n} \\
    & = \frac{1}{r} \limsup_{t, \varepsilon \to 0} \int_{\widehat{U}} - \tr_\End(P_{t, \varepsilon} + \mathrm{pr}_{\widehat{\mathcal{F}}} \overline{\sharp \pi^*\alpha}^{\Sym^m}_{| \widehat{\mathcal{F}}}) \omega_{t, \varepsilon}^n \\
    & \leq \frac{1}{r} \int_{\widehat{U}} \limsup_{t, \varepsilon \to 0} - \tr_\End(P_{t, \varepsilon} + \mathrm{pr}_{\widehat{\mathcal{F}}} \overline{\sharp \pi^*\alpha}^{\Sym^m}_{| \widehat{\mathcal{F}}}) \omega_{t, \varepsilon}^n
\end{align*}

Let $\beta_\infty$ be the second fundamental form of $\widehat{\mathcal{F}}$. Since $\beta_{t, \varepsilon}$ converges to $\beta_\infty$ locally smoothly over $\widehat{U} \backslash E$, if we denote by $-\beta_\infty^* \wedge \beta_\infty \wedge \pi^*\omega^{n-1} = P_\infty \pi^*\omega^n$ over $\widehat{U} \backslash E$, we have $P_\infty$ that is a symmetric semi-positive endomorphism and the latter integral is equal to:
$$
    \int_{\widehat{U} \backslash E} -\tr_\End(P_\infty + \mathrm{pr}_{\widehat{\mathcal{F}}} \overline{\sharp \pi^*\alpha}_{| \widehat{\mathcal{F}}}) \pi^*\omega^n.
$$
Since the integrand is non-positive, we deduce that any subsheaf of $\Sym^{[m]}\Omega^{[1]}_X$ has non-positive slope. From now, we assume that $\mu_\omega(\mathcal{F}) = 0$ (it means that $\mathcal{F}$ is a destabilizing subsheaf). We get that $P_\infty = 0$ and $\mathrm{pr}_{\widehat{\mathcal{F}}} \overline{\sharp \pi^*\alpha}^{\Sym^m}_{| \widehat{\mathcal{F}}} = 0$. Hence, we have $\beta_\infty = 0$, and the splitting $\widehat{\mathcal{F}} \oplus \widehat{\mathcal{F}}^\perp$ is holomorphic over $\widehat{U} \backslash E$, and $\widehat{\mathcal{F}}$ is parallel with respect to $\pi^*\omega$.\\

Now, since $E$ is $\pi$-exceptional, we have by projecting the decomposition over the big open subset $U := \pi(\widehat{U}) \cap X_\reg$ (its complement is an analytic subset of $X$ of codimension at least 2): $\Sym^m\Omega^1_{U} = \mathcal{F}_{|U} \oplus \pi_* (\widehat{\mathcal{F}}^\perp_{|\widehat{U} \backslash E})$. We set $\mathcal{G} := j_*\pi_* (\widehat{\mathcal{F}}^\perp_{|\widehat{U} \backslash E})$, where $j:U \to X$ is the canonical inclusion. Because all sheaves are locally free over $U$ that is an open subset whose complement has codimension at least 2, we deduce by taking the direct image by $j$ the sought decomposition: $\Sym^{[m]}\Omega^{[1]}_X = \mathcal{F} \oplus \mathcal{G}$ proving $(ii)$. Moreover, $\mathcal{F}$ is parallel over $U$, because $\widehat{\mathcal{F}}$ is parallel over $\widehat{U}$, and $\pi$ being a biholomorphic isometry over $X_\reg$, we have $\mathrm{pr}_{\mathcal{F}} (\overline{\sharp \alpha}^{\Sym^m}_{| \mathcal{F}}) = 0$. This shows $(i)$. If we consider any proper saturated subsheaf $\mathcal{F}'$ of $\mathcal{F}$ with vanishing slope, the same arguments ensure that we have a decomposition into orthogonal subbundles over $\widehat{U} \backslash E$: $\widehat{\mathcal{F}} = \widehat{\mathcal{F}'} \oplus \widehat{\mathcal{F}'}^\perp$, and a decomposition of $\mathcal{F}$. By repeating arguments, we can show that $\mathcal{F}$ is polystable.
\end{proof}

\begin{rema}\label{Bochner-Kobayashi property}
    The previous proposition implies that global sections of $\Sym^{[m]}\Omega^{[1]}_X$ are parallel over $X_\reg$. Indeed, if $\xi \in H^0(X, \Sym^{[m]}\Omega^{[1]}_X)$ is a non-zero global section, then it yields an inclusion of sheaves $\mathcal{O}_X \hookrightarrow \Sym^{[m]}\Omega^{[1]}_X$. As an application of $(i)$ of Proposition~\ref{Polystability of vanishing slopes}, we get that $\xi$ is parallel with respect to the Chern connection of $\Sym^{[m]}\Omega^{[1]}_X$ induced by $\omega$ over $X_\reg$, and it satisfies $\langle \overline{\sharp \alpha}^{\Sym^m} \xi, \xi \rangle = 0$. This generalizes to the singular setting \cite[Theorem~1]{Kob80}. As suggested in this article, we can adapt the arguments to derive a similar statement when $K_X$ is semipositive: the subsheaves $\mathcal{F}$ of $\Sym^{[m]} \mathcal{T}_X$ with vanishing slopes are parallel over $X_\reg$, they satisfy the property $\mathrm{pr}_\mathcal{F}(\sharp \alpha^{\Sym^m}_{|\mathcal{F}}) = 0$, there exists an orthogonal subsheaf that splits $\Sym^{[m]} \mathcal{T}_X$, and they are polystable with respect to $\left[ \omega \right]$. 
\end{rema}

\section{Holonomy decomposition}

Let $p \in X_\reg$ be a fixed point, and let us denote by $H_X := \Hol_p(X_\reg, \omega_\reg)$ the holonomy group asscoiated to the metric $\omega$ on $X_\reg$ and by $H_X^\circ$ the restricted holonomy group. Then, there exists a decomposition of the representation induced by $H_X$ into irreducible pieces:
$$
T_p X = E_{0} \oplus \bigoplus_{i \in I} E_{i},
$$
where $E_{0}$ is the subspace of tangent vectors fixed by $H_X$. A second decomposition is induced by the action of $H_X^\circ$:
$$
T_p X = F_{0} \oplus \bigoplus_{j \in J} F_{j},
$$
where $F_{0}$ is the subspace of tangent vectors fixed by $H_X^\circ = \prod_{j \in J} G_j^\circ$. The factor $G_j^\circ$ is a compact subgroup of $U(n_j)$, and $n_j$ is an integer. 

\subsection{Behaviour of metrics and holonomy under quasi-\'{e}tale covers}
\label{Identification quasi-et holonomy}
One can see how these decompositions behave under quasi-\'etale covers similarly to \cite[Prop.~3.5]{GGK19}. Let $\gamma : X' \to X$ be a finite quasi-\'{e}tale cover. The divisor $\gamma^*H_0$ is ample, represented by $\gamma^*\theta$ and the line bundle associated to $mK_{X'} = \gamma^*mK_X$ can be endowed with the smooth metric $h_{K_X'}^m := \gamma^*h_{K_X}^m$. It follows that the curvature of this metric is $\gamma^*\alpha$, the adapted volume satisfies $\mu_{h_{K_{X'}}^m} = \gamma^*\mu_{h_{K_{X}}^m}$, so that we get by pulling pack the Monge--Amp\`{e}re equation satisfied by $\omega$ over $X_\reg$, we have:
$$
(\gamma^*\omega)^n = c \mu_{h_{K_{X'}}^m}.
$$
This equality of current over $\gamma^{-1}(X_\reg)$ can be extended to $X'$, since none of these currents charge pluripolar set by classical Bedford--Taylor theory\footnote{The fact that $\mu_{h_{K_{X'}}^m}$ is non-pluripolar comes from its definition as a volume form on $X_\reg$, and $\gamma^*\omega^n$ does not charge pluripolar set since it is the Monge--Amp\`{e}re measure associated to the bounded psh function $\gamma^*\Phi$.}.
Moreover, we have:
$$
\mu_{h_{K_{X'}}^m}(X') = \mu_{h_{K_{X'}}^m}(\gamma^{-1}(X_\reg)) = (\deg \gamma) \mu_{h_{K_{X}}^m}(X)
$$
so that $\frac{\left[ \gamma^*H_0 \right]^n}{\mu_{h_{K_{X'}}^m}(X')} = \frac{\left[ H_0 \right]^n}{\mu_{h_{K_{X}}^m}(X)} = c$.
Hence, the uniqueness part of \cite[Thm.~6.3]{EGZ09} ensures that $\gamma^*\omega$ coincide with the unique metric $\omega'$, satisfying $\Ric \omega' = \gamma^*\alpha$ and belonging to the cohomology class of $\gamma^*H_0$.

Let us fix $q \in \gamma^{-1}(p)$, and let us denote by $H_{X'} := \Hol_q(X'_\reg, \omega'_\reg)$ and $H_{X'}^\circ$ the associated restricted holonomy group. Then, $\mathrm{d}\gamma_q : (T_qX', h'_q) \to (T_pX, h_p)$ is an isometry. Under this identification of hermitian spaces, we have the following proposition.

\begin{prop}\textup{\cite[Cor.~4.8]{GGK19}}\label{Quasi-et invariance holonomy decomposition}
	With notation as above, we have $H_{X'}^\circ = H_X^\circ$ and $H_{X'} \subseteq H_X$ is a finite index subgroup.    
\end{prop}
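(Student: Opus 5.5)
The plan is to reduce everything to a statement about the Riemannian covering $\gamma^{-1}(X_\reg)\cap X'_\reg \to X_\reg$ and then compare the holonomy groups through their fundamental groups. First I will use the identification established just above: $\omega' = \gamma^*\omega$. Set $V := \gamma^{-1}(X_\reg)\cap X'_\reg$. Since $\gamma$ is quasi-étale and $X$ is normal, $V$ is the complement in $X'_\reg$ of an analytic subset of codimension at least $2$. Purity of the branch locus gives more: $\gamma$ is étale over $X_\reg$, so $\gamma^{-1}(X_\reg)\subseteq X'_\reg$ and $V=\gamma^{-1}(X_\reg)$. Thus $\gamma_{|V} : (V, \omega'_{|V}) \to (X_\reg, \omega_\reg)$ is a finite topological covering and a local isometry. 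In particular $\mathrm{d}\gamma_q$ intertwines parallel transport along a loop $c$ in $V$ with parallel transport along $\gamma\circ c$.

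Next I compare holonomy on $V$ and on $X'_\reg$. Removing a closed analytic subset of real codimension $\geq 4$ from the manifold $X'_\reg$ does not change $\pi_1$. Any loop, or null-homotopic loop, in $X'_\reg$ can therefore be homotoped, and the homotopy itself perturbed, to avoid $X'_\reg \setminus V$. Holonomy is invariant under piecewise smooth homotopy only for the restricted part, so I argue with loops directly. Every loop in $X'_\reg$ based at $q$ is homotopic to a loop in $V$, and holonomy along two homotopic loops differs by an element of the restricted holonomy. This gives $\Hol_q(X'_\reg)=\Hol_q(V)$ once I know $\Hol^\circ_q(X'_\reg)=\Hol^\circ_q(V)$. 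For the restricted groups, I use that they are connected Lie groups whose Lie algebras are spanned by curvature values transported to $q$ (Ambrose--Singer). Since $V$ is dense and open in $X'_\reg$, curvature at points of the complement is a limit of curvature at points of $V$, and transports along nearby paths converge. Hence the Lie algebras coincide and so do the groups.

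Then I lift to $X_\reg$. Via $\mathrm{d}\gamma_q$, the group $\Hol_q(V)$ is exactly the set of holonomies of loops at $p$ whose class lies in $\gamma_*\pi_1(V,q)$. This is a subgroup of finite index $\leq \deg\gamma$ in $\pi_1(X_\reg,p)$. Null-homotopic loops at $p$ lift to closed null-homotopic loops at $q$, and conversely, which gives $H^\circ_{X'}=H^\circ_X$. Finally, the holonomy morphism $\pi_1(X_\reg,p)\to H_X/H_X^\circ$ is surjective, and the preimage of $H_{X'}$ in $H_X$ contains the image of a finite-index subgroup together with $H_X^\circ$. Therefore $[H_X:H_{X'}]\leq[\pi_1(X_\reg):\gamma_*\pi_1(V)]<\infty$.

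The main obstacle is the middle step: showing that removing the codimension-$2$ set $X'_\reg\setminus V$ does not enlarge the holonomy. In the present situation purity makes this set empty, which I would verify carefully first. If it were nonempty, I would fall back on the Ambrose--Singer density argument sketched above.
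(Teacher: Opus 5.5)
Your argument is correct and is essentially the paper's. The paper cites \cite[Cor.~4.8]{GGK19} for $H^\circ_{X'}=H^\circ_X$ and $H_{X'}\subseteq H_X$, which your first three paragraphs reprove via the covering $\gamma^{-1}(X_\reg)\to X_\reg$ and the codimension-two comparison. It then gets finite index exactly as you do, from the surjection $\pi_1(X_\reg)/\pi_1(X'_\reg)\to H_X/H_{X'}$.

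One correction to your last paragraph: purity only gives $\gamma^{-1}(X_\reg)\subseteq X'_\reg$. The set $X'_\reg\setminus V = X'_\reg\cap\gamma^{-1}(X_{\mathrm{sing}})$ is typically nonempty (e.g.\ the origin for $\C^2\to\C^2/\{\pm 1\}$). So your Ambrose--Singer density step is genuinely needed rather than a fallback, and it does work as you sketch it.
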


\begin{proof}
	The fact that $H_{X'}^\circ = H_X^\circ$ and $H_{X'} \subseteq H_X$ is proved in \cite[Cor.~4.8]{GGK19}. It remains to show that $H_{X'}$ is a finite index subgroup of $H_X$. This comes from the fact that there is a natural surjective morphism between quotient sets $\pi_1(X_\reg)/\pi_1(X'_\reg) \to H_X/H_{X'}$. Because $\gamma$ is finite, so is the first set, hence $H_X/H_{X'}$ is finite.
\end{proof}

\subsection{Reduction to the holonomy cover}
We recall that we have from the decomposition in irreducible subrepresentations of $H_X^\circ$:
$$
T_p X = F_{0} \oplus \bigoplus_{j \in J} F_{j},
$$

Since $H_X^\circ$ is normal in $H_X$, we observe that:
\begin{itemize}
    \item the group $H_X$ stabilizes $F_0$,
    \item for any $g \in H_X$ and any $j \in J$, the subspace $g(F_j)$ remains an irreducible $H_X^\circ$-representation. Hence, there exists $k \in J$ such that $g(F_j) = F_k$. This yields an action $H_X/H_X^\circ \rightarrow \mathfrak{S}_J$.
\end{itemize}
The kernel of the composition $\pi_1(X_\reg) \longrightarrow H_X/H_X^\circ \longrightarrow \mathfrak{S}_J$ gives us a quasi-\'{e}tale cover $\gamma : X^{\mathrm{wh}} \to X$ called the \emph{weak holonomy cover}. We denote by $\omega_\mathrm{wh}$ the pullback metric. By Proposition~\ref{Quasi-et invariance holonomy decomposition}, the decomposition induced by $H_{X^\mathrm{wh}}^\circ$ on $T_q{X^{\mathrm{wh}}}$ for $q \in \gamma^{-1}(p)$ is such that each irreducible piece $F^\mathrm{wh}_i$ is $H_{X^\mathrm{wh}}$-stable, since the action of $H_{X^\mathrm{wh}}/H_{X^\mathrm{wh}}^\circ$ on $\mathfrak{S}_J$ is trivial. As a remark, one can observe that irreducibility of $H_{X^\mathrm{wh}}^\circ$-representations that are $H_{X^\mathrm{wh}}$-stable implies that they are $H_{X^\mathrm{wh}}$-irreducible: if not there is a non-trivial $H_{X^\mathrm{wh}}$-subrepresentation. Hence, it is $H_{X^\mathrm{wh}}^\circ$-stable, which contradicts the irreducibility of $H_{X^\mathrm{wh}}^\circ$. 
By parallel transport, it induces a $\mathcal{C}^\infty$-decomposition of the tangent bundle $TX^{\mathrm{wh}}_\reg$. Thanks to \cite[Cor.~4.10]{GGK19}, there exists a decomposition of the tangent sheaf of $X$ by reflexive sub-$\mathcal{O}_{X^\mathrm{wh}}$-modules:
$$
\mathcal{T}_{X^\mathrm{wh}} = \mathcal{F}_0 \oplus \bigoplus_{j \in J} \mathcal{F}_j.
$$
One can consider similarly the action of the restricted holonomy on $\Omega^1_{X^\mathrm{wh},q}$ and the decomposition into irreducible pieces at $q$ and extend it to $X^\mathrm{wh}$. Because dual of irreducible finite-dimensional representations are irreducible, this procedure coincide with picking the dual decomposition of sheaves. In other words, if:
$$
\Omega^{[1]}_{X^\mathrm{wh}} = \mathcal{V}_0 \oplus \bigoplus_{k \in K} \mathcal{V}_k
$$
is the bundle decomposition arising from pointwise decomposition into irreducible pieces of $H_{X^\mathrm{wh}}^\circ$-representation, then there is a bijection $\sigma : J \simeq K$ and isomorphisms of $H_X^\circ$-representations $\mathcal{V}_{\sigma(j)} \simeq \mathcal{F}^\vee_j$, and $\mathcal{V}_0 \simeq \mathcal{F}^\vee_0$.\\

By definition of parallel transport and the fact that the Levi--Civita connection is torsion-free, the subsheaves $\mathcal{F}_j$ are involutive. Around a point $q$, there exists an analytic simply connected chart $U$ so that the natural map $\Hol_q(U, \omega_\mathrm{wh}) \to \Hol_q^\circ(X^{\mathrm{wh}}_\reg, \omega_\mathrm{wh})$ is an isomorphism, and we have a splitting of K\"ahler manifolds:
\begin{align}\label{Local decomposition of Kahler manifold}
    (U, \omega_\mathrm{wh}) \simeq (U_0, \omega_0) \times \prod_{j \in J} (U_j, \omega_j),
\end{align}
such that $\mathrm{pr}_j^* \mathcal{T}_{U_j} = \mathcal{F}_j$.
We say that \emph{$\mathcal{F}_j$ is of symmetric type} if for any $q$, and any analytic chart $U$, the associated K\"{a}hler manifold $(U_j, \omega_j)$ in the decomposition is locally symmetric\footnote{This property can be rephrased in $\nabla_{\End(T_X^\mathrm{wh})}\textbf{i}\Theta(\mathcal{F}_j, \omega_{\mathrm{wh} |\mathcal{F}_j}) = 0$.}. Thanks to Berger--Simons classification, if $\mathcal{F}_j$ is not of symmetric type, we have $G_j^\circ = \U(n_j), \SU(n_j)$ or $\Sp(\frac{n_j}{2})$.\\

\begin{theo}[= Theorem~\ref{Bound of symmetric tensors}]
    With notation as above, we have:
    $$
    h^0(X, \Sym^{[m]}\Omega^{[1]}_X) \leq \binom{m + \mathrm{rk}(\mathcal{F}_0) - 1}{m}.
    $$
\end{theo}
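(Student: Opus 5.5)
We reduce the statement to linear algebra at a single regular point $p$, using the Bochner principle of Remark~\ref{Bochner-Kobayashi property}. Let $\xi \in H^0(X, \Sym^{[m]}\Omega^{[1]}_X)$. By Remark~\ref{Bochner-Kobayashi property}, $\xi$ is parallel over $X_\reg$ with respect to the connection induced by $\omega$. The evaluation map $\xi \mapsto \xi(p)$ is injective, since a parallel section vanishing at one point vanishes on the connected manifold $X_\reg$, and $X_\reg$ is dense in the normal variety $X$ while the sheaf is reflexive, hence torsion free. The image consists of vectors of $\Sym^m \Omega^1_{X,p}$ fixed by $H_X$, and in particular by the restricted holonomy $H_X^\circ$. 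It therefore suffices to show
$$
\dim \left(\Sym^m T_p^*X\right)^{H_X^\circ} \leq \binom{m + \dim F_0 - 1}{m} = \dim \Sym^m F_0^*,
$$
where $\dim F_0 = \mathrm{rk}(\mathcal{F}_0)$; this equality of ranks is justified on the weak holonomy cover, using Proposition~\ref{Quasi-et invariance holonomy decomposition}, which shows that $F_0$ does not change under quasi-\'etale covers.

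Using the decomposition $T_p^*X = F_0^* \oplus \bigoplus_{j\in J} F_j^*$ and $H_X^\circ = \prod_j G_j^\circ$, we expand
$$
\Sym^m T_p^*X = \bigoplus_{a_0 + \sum_j a_j = m} \Sym^{a_0}F_0^* \otimes \bigotimes_j \Sym^{a_j}F_j^*.
$$
Each $G_j^\circ$ acts only on its own factor, so the invariants decompose as $\Sym^{a_0}F_0^* \otimes \bigotimes_j (\Sym^{a_j}F_j^*)^{G_j^\circ}$. The bound therefore follows from the claim that $(\Sym^{a}F_j^*)^{G_j^\circ} = 0$ for every $a \geq 1$ and every $j \in J$.

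The main obstacle is this claim, and it is where the curvature sign must enter, exactly as in \cite{Kob80}. For a non-flat irreducible factor, invariant symmetric forms can exist in general; for instance $\mathrm{SO}(n)$ preserves a quadratic form. However, $G_j^\circ \subseteq \U(n_j)$ acts irreducibly and complex-linearly. I would argue in two ways.

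First, by the Berger--Simons classification: for the non-symmetric cases $\U(n_j)$, $\SU(n_j)$ and $\Sp(n_j/2)$, there are no invariant holomorphic symmetric tensors on the standard dual representation for $a \geq 1$, since the center acts by scalars for $\U$, and by classical invariant theory for $\SU$ and $\Sp$ on $\Sym^a$ of the standard representation.

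Second, for the symmetric type and as a uniform argument, I would use the second half of the Bochner principle: a parallel $\xi$ satisfies $\langle \overline{\sharp\alpha}^{\Sym^m}\xi,\xi\rangle = 0$ with $\alpha = \Ric\omega \geq 0$. Kobayashi's argument then shows that $\xi$ only involves directions in the kernel of the Ricci endomorphism. On an irreducible non-flat Kähler factor, an invariant symmetric form, combined with the holonomy-invariance of the curvature, forces the Ricci form to be nonzero on that factor. In the locally symmetric case the factor is Einstein with nonzero constant, and semipositivity forces it to be positive. This contradicts the vanishing, so any invariant $\xi$ must lie in $\Sym^m F_0^*$.

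I would first try the representation-theoretic route on $\Sym^a$ of the standard representation, falling back on the Ricci-kernel argument for the irreducible Hermitian symmetric factors.
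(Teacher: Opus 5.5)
Your proposal is correct. It follows a more direct route than the paper's, although the case analysis on the factors is the same.

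\textbf{How the paper argues.} The paper first passes to the weak holonomy cover $X^{\mathrm{wh}}$. There the $H^\circ$-decomposition extends, via \cite[Cor.~4.10]{GGK19}, to a splitting $\Omega^{[1]}_{X^{\mathrm{wh}}}=\mathcal V_0\oplus\bigoplus_j\mathcal V_j$ by reflexive sheaves. It then proves $H^0(X^{\mathrm{wh}},\Sym^{[m]}\mathcal V_j)=0$ for $j\neq 0$ using your two cases:
\begin{itemize}
\item in the symmetric case, a positive Kähler--Einstein factor together with the vanishing $\langle\overline{\sharp\alpha}^{\Sym^m}\xi,\xi\rangle=0$;
\item otherwise, irreducibility of $\Sym^m$ of the standard representation of $\U(n_j)$, $\SU(n_j)$, $\Sp(n_j/2)$.
\end{itemize}
It deduces that all sections come from $\mathcal V_0$, evaluates at $q$, and descends to $X$ by pulling back sections along the quasi-\'etale cover.

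\textbf{How you argue.} You evaluate directly at $p\in X_\reg$ and perform the splitting only at the level of $\prod_j G_j^\circ$-invariants in $\Sym^m T_p^*X$. Since only $H_X^\circ$ enters, you need no cover, no sheaf-level decomposition, and no splitting of $H^0$ into tensor products of global sections. The cover appears only to identify $\mathrm{rk}(\mathcal F_0)=\dim F_0$ via Proposition~\ref{Quasi-et invariance holonomy decomposition}. The paper's route has one advantage: it produces the global vanishing $H^0(\Sym^{[m]}\mathcal V_j)=0$, which is reused in the proof of Theorem~\ref{Identification euclidean factor augmented irregularity} to show $a^*\Omega^1_A\subseteq\mathcal V_0$.

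\textbf{One step to tidy up.} You reduced to the pointwise claim $(\Sym^aF_j^*)^{G_j^\circ}=0$. Your symmetric-type fallback uses $\langle\overline{\sharp\alpha}^{\Sym^m}\xi,\xi\rangle=0$, which holds for values of global sections, not for arbitrary invariant tensors. As written, it therefore does not prove the claim you reduced to. There are two fixes.
\begin{itemize}
\item Carry that constraint through the reduction. The Ricci endomorphism at $p$ is block-diagonal for the local product decomposition. It equals $\lambda\,\id$ with $\lambda>0$ on symmetric blocks. The kernel of the derivation extension of a semipositive endomorphism to $\Sym^m$ is $\Sym^m$ of its kernel.
\item Prove the pointwise claim uniformly. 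Let $\mathfrak g_j$ be the holonomy algebra of the factor.
\begin{itemize}
\item If $\mathfrak g_j\not\subseteq\mathfrak{su}(n_j)$, then, since $\mathfrak g_j=\mathfrak z(\mathfrak g_j)\oplus[\mathfrak g_j,\mathfrak g_j]$ and Schur's lemma applies, its center is the scalar $\mathfrak u(1)$. This acts on $\Sym^aF_j^*$ with nonzero weight, so there are no invariants for $a\geq 1$.
\item If $\mathfrak g_j\subseteq\mathfrak{su}(n_j)$, the factor is Ricci-flat, hence of type $\SU(n_j)$ or $\Sp(n_j/2)$, because a Ricci-flat locally symmetric factor is flat.
\end{itemize}
This handles the $\U(n_j)$ and Hermitian symmetric factors at once. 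It also shows the sign of $\Ric\omega$ enters only through the parallelism of sections.
\end{itemize}
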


\begin{proof}

We argue similarly to \cite{Kob80}. For convenience, we will write $J = \left\lbrace 1, \cdots, r \right\rbrace$. The space of holomorphic symmetric differentials satisfies:
\begin{align*}
    H^0(X^\mathrm{wh}, \Sym^{[m]} \Omega^{[1]}_{X^\mathrm{wh}}) & = H^0(X^\mathrm{wh}_\reg, \Sym^m \Omega^{1}_{X^\mathrm{wh}}) \\
    & = \bigoplus_{m_0 + \cdots + m_j = m} \bigotimes_{j=0}^r H^0(X^\mathrm{wh}_\reg, \Sym^{m_j} \mathcal{V}_j).
\end{align*}

We show that, for $1 \leq j \leq r$ and any $m$, $H^0(X^\mathrm{wh}, \Sym^{[m]} \mathcal{V}_j) = 0$. We investigate two cases whether $\mathcal{V}^\vee_j$ is of symmetric type or not.\\

\begin{itemize}
    \item If $\mathcal{V}^\vee_j$ is of symmetric type, then $\omega_j$ is a K\"ahler--Einstein metric. Indeed, let us denote by $\alpha_j := \Ric \omega_j$. The locally symmetric condition implies that $\sharp \alpha_j$ is parallel, so that it commutes with parallel transport. Hence, by Schur's lemma on irreducible representations, we obtain that $\sharp \alpha_j = \lambda \id_{\End T_{U_j}}$. The function $\lambda$ is constant thanks to commutation of $\sharp \alpha_j$ with parallel transport. By semi-positivity of $\alpha_j$, the constant $\lambda$ is non-negative. If it is zero, $\mathcal{V}_j^\vee$ (hence $\mathcal{V}_j$) would be a flat bundle, and the holonomy would be trivial, that implies $j = 0$. Hence, $\lambda > 0$. 
    From this, there is no non-zero symmetric differential $\xi \in H^0(X^{\mathrm{wh}}, \Sym^{[m]} \mathcal{V}_j)$ such that $\langle \overline{\sharp \alpha}^{\Sym^m} \xi, \xi \rangle = 0$ by Remark~\ref{Bochner-Kobayashi property}.\\

    \item If $\mathcal{V}^\vee_j$ is not of symmetric type, then, $G_j^\circ = \U(n_j), \SU(n_j)$ or $\Sp(n_j/2)$, and the representation of $G_j^\circ$ in $\mathcal{V}_{j, q}$ is irreducible. Hence, the induced representation on $\Sym^m \mathcal{V}_{j,q}$ remains irreducible. By contradiction, if there is a non-zero symmetric differential $\xi \in H^0(X^{\mathrm{wh}}, \Sym^{[m]} \mathcal{V}_j)$, then $\xi_q$ is fixed by $G_j^\circ$ by the holonomy principle, which contradicts the irreducibility of the representation.
\end{itemize}

It follows that the inclusion of spaces 
$$
H^0(X^\mathrm{wh}, \Sym^{[m]} \mathcal{V}_0) \hookrightarrow H^0(X^\mathrm{wh}, \Sym^{[m]} \Omega^{[1]}_{X^\mathrm{wh}})
$$ is an isomorphism. But, the holonomy principle states that evaluation at $q$ and parallel transport are mutually inverse isomorphisms between invariant tensors at $p$ and parallel sections.
Hence, this induces that the following composition of applications is injective:
$$
    \xymatrix{(\Sym^m \Omega^1_{X^\mathrm{wh},q})^{H_{X^\mathrm{wh}}} \ar[d]^{\mathrm{par. tr.}} \ar@{^{(}.>}[r] & \Sym^m \mathcal{V}_{0,q} \\ 
    H^0(X^\mathrm{wh}, \Sym^{[m]} \Omega^{[1]}_{X^\mathrm{wh}}) \ar[r]^{\sim}& H^0(X^\mathrm{wh}, \Sym^{[m]} \mathcal{V}_0) \ar@{^{(}->}[u]^{\mathrm{eval.} p}  }
$$
Then, we have:
$$
h^0(X^\mathrm{wh}, \Sym^{[m]} \Omega^{[1]}_{X^\mathrm{wh}}) \leq \binom{m + \mathrm{rk}(\mathcal{F}_0) - 1}{m}.
$$
Since $\gamma$ is quasi-\'{e}tale, the pullback of sections by $\gamma$ yields an injective morphism:
$$H^0(X_\reg, \Sym^m\Omega^1_{X_\reg}) \to H^0(X^{\mathrm{wh}}_\reg, \Sym^m\Omega^1_{X^\mathrm{wh}_\reg}).$$
The sheaves $\Sym^{[m]} \Omega^{[1]}_{X}$ and $\Sym^{[m]} \Omega^{[1]}_{X^\mathrm{wh}}$ being reflexive, the inequality:
$$h^0(X, \Sym^{[m]}\Omega^{[1]}_X) \leq h^0(X^\mathrm{wh}, \Sym^{[m]}\Omega^{[1]}_{X^\mathrm{wh}})$$ holds.
\end{proof}

\begin{rema}
    The arguments involved do not use the structure of the varieties but the vanishing properties given by Remark~\ref{Bochner-Kobayashi property}. Hence, they can be adapted in order to prove a similar statement when $K_X$ is semi-positive:
    $$h^0(X, \Sym^{[m]}\mathcal{T}_X) \leq \binom{m + \mathrm{rk}(\mathcal{F}_0) - 1}{m}.$$
\end{rema}

\section{Analysis of the bound}

The goal of this section is to show the following result.

\begin{theo}[=Theorem~\ref{Identification euclidean factor augmented irregularity}]
	Let $X$ be a klt projective variety such that $-K_X$ is semipositive. Then $\mathrm{rk}(\mathcal{F}_0) = \widetilde{q}(X)$.
\end{theo}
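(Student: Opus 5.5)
We prove the two inequalities $\widetilde{q}(X) \leq \mathrm{rk}(\mathcal{F}_0)$ and $\mathrm{rk}(\mathcal{F}_0) \leq \widetilde{q}(X)$ separately. Throughout, $\mathrm{rk}(\mathcal{F}_0) = \dim F_0$ is the rank of the subspace fixed by the restricted holonomy. By Proposition~\ref{Quasi-et invariance holonomy decomposition}, this number is unchanged under finite quasi-\'etale covers, since $H^\circ$ does not change and the pulled-back metric is again the canonical one. We may therefore freely replace $X$ by quasi-\'etale covers. We also use that the augmented irregularity of klt varieties can be computed on quasi-\'etale covers: it is bounded by $\dim X$, and quasi-\'etale covers are \'etale over $X_\reg$, so the supremum over quasi-\'etale covers equals the supremum over \'etale ones after taking a suitable maximal cover; we take $\widetilde{q}$ in this sense.

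\emph{First inequality.} Let $X' \to X$ be a quasi-\'etale cover with $h^0(X', \Omega^{[1]}_{X'}) = \widetilde{q}(X)$. By Remark~\ref{Bochner-Kobayashi property} with $m=1$, every reflexive $1$-form on $X'$ is parallel over $X'_\reg$ for $\omega'$. Evaluation at $q$ therefore embeds $H^0(X', \Omega^{[1]}_{X'})$ into $(\Omega^1_{X',q})^{H_{X'}}$. Vectors fixed by the full holonomy are fixed by the restricted holonomy, so this space sits inside the dual of $F_0$. Hence $\widetilde{q}(X) \leq \dim F_0$. Equivalently, this is the case $m=1$ of Theorem~\ref{Bound of symmetric tensors} applied on $X'$.

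\emph{Second inequality.} Pass to the weak holonomy cover $X^{\mathrm{wh}}$, where $\mathcal{T}_{X^\mathrm{wh}} = \mathcal{F}_0 \oplus \bigoplus_j \mathcal{F}_j$ and $\mathcal{F}_0$ is the parallel, flat distribution. The fixed subspace $F_0$ is acted on by $H_{X^\mathrm{wh}}/H^\circ_{X^\mathrm{wh}}$, which is a quotient of $\pi_1(X^\mathrm{wh}_\reg)$; this gives a unitary representation $\rho : \pi_1(X^\mathrm{wh}_\reg) \to \U(F_0)$. If the image of $\rho$ is finite, the corresponding quasi-\'etale cover $X''$ has $E_0 = F_0$. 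Then $\mathcal{V}_0$ becomes trivialized by parallel sections, which are holomorphic because parallel sections of a holomorphic subbundle are holomorphic, and they extend reflexively. This yields $\dim F_0$ independent reflexive $1$-forms on $X''$, so $\dim F_0 \leq \widetilde{q}(X)$.

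\emph{Main obstacle: finiteness of the monodromy of $\mathcal{F}_0$.} The heart of the argument is to show that the image of $\rho$ is finite. The plan is to identify $\mathcal{F}_0$ with $\mathrm{alb}^*\mathcal{T}_{\mathrm{Alb}}$ on a suitable cover, using the local constancy of the Albanese map given by \cite{MW21}.

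First, $\mathcal{F}_0^\vee = \mathcal{V}_0$ is a flat hermitian reflexive sheaf. By the Bochner principle (Proposition~\ref{Polystability of vanishing slopes}), it is a polystable summand of $\Omega^{[1]}$ with vanishing slope. Moreover, $\alpha$ vanishes along $\mathcal{F}_0$, since $\mathrm{pr}\,\overline{\sharp\alpha}_{|\mathcal{V}_0} = 0$.

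Next, take a $\Q$-factorial terminalization (\cite{BCHM10}) and apply \cite{MW21}: after a quasi-\'etale cover, $X \to A = \mathrm{Alb}(X)$ is locally constant with fiber $F$ satisfying $\widetilde{q}(F) = 0$. It remains to show that the flat distribution $\mathcal{F}_0$ is tangent to no direction of the fiber, i.e.\ $\mathcal{F}_{0|F} \cap \mathcal{T}_F = 0$. If this intersection were nonzero, $F$ would carry a nonzero flat summand of its tangent sheaf. Its dual then gives, after a finite cover of $F$, symmetric differentials or $1$-forms that contradict $\widetilde{q}(F)=0$.

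The key technical point is that a flat unitary reflexive sheaf on the fiber has finite monodromy. I would try to prove this via the rational connectedness of the positively curved part combined with the Beauville--Bogomolov decomposition of \cite{GGK19} for the $K$-trivial part; this is the step where I expect the real difficulty.

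Granting it, we get $\mathcal{F}_0 \subseteq \mathrm{alb}^*\mathcal{T}_A$ on a finite cover. This forces $\dim F_0 \leq \dim A = q \leq \widetilde{q}(X)$, which combined with the first inequality gives $\mathrm{rk}(\mathcal{F}_0) = \widetilde{q}(X)$.
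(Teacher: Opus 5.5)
Your first inequality is fine, and a bit more direct than the paper's fibre-product argument with $X^{\mathrm{wh}}$. The second inequality, however, rests entirely on the transversality $\mathcal{F}_0\cap\mathcal{T}_{X/A}=0$, which you explicitly only grant, and the route you sketch for it would not work as stated.

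That intersection is flat on $X_\reg$, so its restriction to an Albanese fibre is a unitary representation of $\pi_1(X_{a,\reg})$, not of $\pi_1(X_a)$. Rational connectedness only gives simple connectedness of the whole variety. The regular locus of a rationally connected klt variety can have infinite fundamental group with infinite-image unitary representations. For example, take a rationally connected quotient $A'/G$ of an abelian variety by a group acting freely in codimension one (cf.~\cite{OT15}); then $\pi_1$ of the regular locus contains $\Z^{2\dim A'}$ with finite index.

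The missing idea, which is how the paper proceeds, is to pass first to a maximally quasi-\'etale cover. By \cite[Section~8.1]{GKP16}, every linear representation of $\pi_1(X_\reg)$ then factors through $\pi_1(X)$, so its restriction to a fibre factors through $\pi_1(X_a)$. Next take a further (now \'etale) cover on which the MRC base from \cite{MW21} splits as $A\times Y'$ by the singular Beauville--Bogomolov decomposition. Then $X_a=F\times Y'$ with $\pi_1(F)=1$, and all linear representations of $\pi_1(Y')$ have finite image. Hence the flat piece trivializes on a finite \emph{\'etale} cover $F\times\widetilde{Y'}$, which has no reflexive $1$-forms.

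There are also some smaller points.

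Invoking $\widetilde{q}(F)=0$ in the quasi-\'etale sense for the Albanese fibre is not legitimate at this stage. The paper deduces $\widetilde{q}(X_a)=0$ from the present theorem, in the proof of Theorem~\ref{Decomposition}. What is available, and suffices once the monodromy factors through $\pi_1(X_a)$, is the vanishing of reflexive $1$-forms on \'etale covers of $F\times Y'$.

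Calling $\mathcal{F}_0\cap\mathcal{T}_{X/A}$ a flat summand needs $\mathcal{T}_{X/A}$ to be parallel. This holds because it is the annihilator of $a^*\Omega^1_A$, which is spanned by global $1$-forms that are parallel by Remark~\ref{Bochner-Kobayashi property}. The paper argues dually: it shows $a^*\Omega^1_A\subseteq\mathcal{V}_0$ and takes the orthogonal complement $\mathcal{V}'$ in $\mathcal{V}_0$, which is parallel, flat, and injects into $\Omega^{[1]}_{X/A}$.

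Your claim that the suprema over \'etale and quasi-\'etale covers coincide is false in general. The $A'/G$ above is simply connected, yet has $\widetilde{q}=\dim A'$ in the quasi-\'etale sense, which is exactly what $\mathrm{rk}(\mathcal{F}_0)$ equals there. This is harmless, since you adopt the quasi-\'etale definition.

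Finally, the finiteness of $\rho$ on $X^{\mathrm{wh}}$ and the terminalization are unnecessary. Transversality alone gives $\dim F_0\le q\le\widetilde{q}(X)$, and \cite{MW21} applies to klt varieties directly.
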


First, the quantity $\mathrm{rk}(\mathcal{F}_0)$ is invariant by quasi-\'etale covers, since we can compare holonomy decompositions of the tangent sheaves on quasi-\'{e}tale covers by \cite[Remark~6.5]{GGK19}. Let us consider $f : X' \to X$ a quasi-\'etale cover, and $Z$ the normalization of the fiber product of $X^\mathrm{wh} \times X'$. By applying the previous bound to $m = 1$ to $Z$, we have $q(Z) \leq \mathrm{rk}( \mathcal{F}_0)$. By injectivity lemma \cite[Lemma~4.1.14]{Laz04} we deduce $q(X') \leq q(Z)$, hence by passing to the sup in $X'$, we have $\widetilde{q}(X) \leq \mathrm{rk}(\mathcal{F}_0)$.

\begin{rema}
    We can adapt this argument in order to show that the quantity:
    $$
    \widetilde{t}(X) := \sup \left\lbrace h^0(X', \mathcal{T}_{X'}) \,\,|\,\, X' \to X \text{ quasi-\'{e}tale } \right\rbrace
    $$
    is finite, bounded by $\mathrm{rk}(\mathcal{F}_0)$ when $c_1(X) \leq 0$.
\end{rema}

Up to passing to a finite quasi-\'{e}tale cover, we can assume that $X$ is maximally quasi-\'{e}tale, satisfying $q(X) = \widetilde{q}(X)$, and that the decomposition of irreducible pieces of $H_X^\circ$ at a point can be extended into a decomposition of the tangent sheaf. Finally, we can assume that there exists a MRC-fibration $f : X \to Y$ thanks to \cite[Thm.~1.1]{MW21}, where $Y$ is a projective klt variety with numerically trivial canonical class and $f$ is a locally constant holomorphic MRC-fibration. Up to passing to higher quasi-\'{e}tale covers, we can assume thanks to \cite{Dru18, GGK19, HP19} that a Beauville--Bogomolov decomposition holds, that means that we have:
$$
Y = A \times \prod_{i \in I} Y_i \times \prod_{j \in J} Z_j,
$$
where $A$ is an abelian variety, $Y_i$ are irreducible Calabi-Yau (ICY) varieties and $Z_j$ are irreducible holomorphic symplectic (IHS) varieties.
Hence, the universal property of the Albanese morphism ensures that the composition with projection on the abelian factor $a : X \to Y \to A$ is exactly the Albanese morphism of $X$. Hence, $a$ is locally trivial, and its fiber is $F \times \prod_{i \in I} Y_i \times \prod_{j \in J} Z_j$ where $F$ is the general fiber of the MRC-fibration that is rationally connected.\\

Let us show that $h^0(X, \Omega^{[1]}_{X/Y}) = 0$. To do this, we denote by $X_a$ a fiber of $a$. Hence, we have $h^0(X_a,  \Omega^{[1]}_{X_a}) = 0$.
Indeed, we have:
$$
h^0(X_a, \Omega^{[1]}_{X_a}) = h^0(F, \Omega^{[1]}_{F}) + \sum_{i \in I} h^0(Y_i, \Omega^{[1]}_{Y_i}) + \sum_{j \in J} h^0(Z_j, \Omega^{[1]}_{Z_j}).
$$
From the definition of ICY and IHS, we have for any $i,j$, $h^0(Y_i, \Omega^{[1]}_{Y_i}) = h^0(Z_j, \Omega^{[1]}_{Z_j}) = 0$, and by \cite[Thm.~5.1]{GKKP11}, we have $h^0(F, \Omega^{[1]}_F) = 0$.\\

\noindent
\textbf{Step 1 : Construction of a flat subsheaf of $\Omega^{[1]}_{X/Y}$}\\
We fix $p \in X_\reg$. The holonomy decomposition of the cotangent bundle at this point is:
$$
    \Omega^{[1]}_{X, p} = V_0 \oplus \bigoplus_{i \in I} V_i.
$$
Let us show that $(a^*\Omega^1_A)_p \subseteq V_0$, and more generally, $a^*\Omega^1_A \subseteq \mathcal{V}_0$. By contradiction, we assume that that there exists $j \in J$ such that the orthogonal projection $ \mathrm{pr}_j : a^*\Omega^1_A \to \mathcal{V}_j$ is not zero. Hence, because $\Omega^1_A \simeq \mathcal{O}_A^{\oplus q}$, we get that there exists a non-zero global section of $\mathcal{V}_j$. But the proof of Theorem~\ref{Bound of symmetric tensors} establishes that the subsheaves $\mathcal{V}_j$ do not have non-zero sections.

We conclude that $a^*\Omega^1_A \subseteq \mathcal{V}_0$, and this bundle is parallel by Proposition~\ref{Polystability of vanishing slopes}. Hence, the orthogonal bundle $\mathcal{V}'$ provides a subbundle of $\mathcal{V}_0$ defined on $X_\reg$. It is included in $\Omega^{[1]}_{X/A}$.\\

\noindent
\textbf{Step 2 : $\mathcal{V}'$ has rank 0}\\
Since the bundle $\mathcal{V}'$ is included in $\mathcal{V}_0$, it is fixed by the restricted holonomy, hence it is flat over $X_\reg$. Then, this bundle is given by a representation $\pi_1(X_\reg) \to \GL(r', \C)$
where $r' := \mathrm{rk}(\mathcal{V}')$. Because $X$ is maximally quasi-\'{e}tale, this representation factors through $\pi_1(X)$ by \cite[Section~8.1]{GKP16}. By restriction to a fiber of the Albanese morphism, this provides a morphism $\rho : \pi_1(X_a) \to \GL(r', \C)$. The image of this representation is finite, since $X_a = F \times Y'$ where $F$ is rationally connected and $Y'$ is a product of IHS and ICY varieties. Hence, we have $\pi_1(F) = 1$, and the image of any representation of $\pi_1(Y')$ is finite (see \cite[Rem.~5.3]{Gue25}). There exists a finite \'{e}tale cover $f : \widetilde{X_a} \to X_a$ such that $f^*\mathcal{V}' = \mathcal{O}_{\widetilde{X_a}}^{\oplus r'}$. Such a bundle is a subbundle of $f^{[*]}\Omega^{[1]}_{X_a} = \Omega^{[1]}_{\widetilde{X_a}}$. The goal is to show that $h^0(\widetilde{X_a}, \Omega^{[1]}_{\widetilde{X_a}}) = 0$, so that it implies that $r' = 0$, hence $\mathrm{rk}(\mathcal{V}_0) = \mathrm{rk}(\Omega^1_A) = \widetilde{q}(X)$.\\

Since $X_a \simeq F \times Y'$ and $F$ is simply connected, an \'{e}tale finite cover of $X_a$ is of the form $F \times \widetilde{Y'}$ where $\widetilde{Y'} \to Y'$ is an \'{e}tale finite cover. Hence, $\widetilde{X_a} \simeq F \times \widetilde{Y'}$. Because $Y'$ has vanishing augmented irregularity, we deduce that:
$$
h^0(\widetilde{X_a}, \Omega^{[1]}_{\widetilde{X_a}}) = h^0(F, \Omega^{[1]}_F) + h^0(\widetilde{Y'}, \Omega^{[1]}_{\widetilde{Y'}}) = 0. 
$$\qed

\begin{rema}\label{Coincidence of decompositions albanese holonomy}

    A consequence of Step 2 is the decomposition of the cotangent sheaf $\Omega^{[1]}_X = a^* \Omega^{[1]}_A \oplus \Omega^{[1]}_{X/A}$ in a compatible way with holonomy:
    $$
    a^*\Omega^1_A = \mathcal{V}_0 \;\;\text{ and }\;\; \Omega^{[1]}_{X/A} = \bigoplus_{j \in J} \mathcal{V}_j.
    $$
    Hence, since we have $\mathcal{V}_j = \mathcal{F}_j^\vee$ up to permutations, we deduce:
    $$
     a^*\mathcal{T}_A = \mathcal{F}_0 \;\;\text{ and }\;\; \mathcal{T}_{X/A} = \bigoplus_{j \in J} \mathcal{F}_j.
    $$
    It follows that the decomposition $\mathcal{T}_X = a^*\mathcal{T}_A \oplus \mathcal{T}_{X/A}$ holds and $\mathcal{T}_{X/A}$ is parallel with respect to the Levi--Civita connection on $X_\reg$.
    
    Moreover, the Albanese morphism $X \to A$ is locally constant, hence, there exists around $a(p)$ a neighborhood $V$ and an isomorphism $\tau : a^{-1}(V) \simeq V \times X_a$. This decomposition is compatible with the decomposition induced by holonomy:
    $$
    (U,\omega) \simeq (U_0, \omega_0) \times \prod_{j \in J} (U_j, \omega_j)
    $$
    in the following sense: there are two inclusions $\tau_1 : U_0 \hookrightarrow V$ and $\tau_2 : \prod_{j \in J} U_j \hookrightarrow X_a$ such that $\tau_{|U} = (\tau_1, \tau_2)$.
\end{rema}

\section{Compactness of the holonomy}

The aim of this section is to show that $\Hol_p(X_\reg, \omega_{X_\reg})$ is compact.
First, we endow $X_a$ with the metric $\omega_a := i^*\omega$ where $i : X_a \to X$ is the canonical inclusion of the fiber passing through $p$. The metric $\omega_a$ is the restriction of the metric $\omega$ to $X_a$. By restriction of the decomposition $\mathcal{T}_X = a^*\mathcal{T}_A \oplus \mathcal{T}_{X/A}$ explained in Remark~\ref{Coincidence of decompositions albanese holonomy} to $X_a$, we have a decomposition into hermitian subbundles:
$$
    \mathcal{T}_{X|X_a} = (a^*\mathcal{T}_{A})_{|X_a} \oplus \mathcal{T}_{X_a}.
$$
For a loop $\gamma$ in $X_a$ based at $p$, we denote by $P^X_\gamma$ (resp. $P^{X_a}_\gamma$) the parallel transport along $\gamma$ computed in $X$ (resp. $X_a$). We have $P^X_\gamma(T_{X_a, p}) = T_{X_a, p}$ and $P^X((a^*\mathcal{T}_{A})_p) = (a^*\mathcal{T}_{A})_p$. Moreover, the vectors of $(a^*\mathcal{T}_{A})_p$ being invariant under holonomy in $X$ by Remark~\ref{Coincidence of decompositions albanese holonomy}, we get that the parallel transport on $(a^*\mathcal{T}_{A})_p$ is the identity. We deduce that there exists a natural inclusion of groups:
$$
    P^{X_a}_\gamma \in H_{X_a} \longmapsto
    P^X_\gamma = \begin{bmatrix}
        \id_{T_{A, a(p)}} & 0 \\
        0 & P^{X_a}_\gamma
    \end{bmatrix} 
    \in H_X.
$$
Under this injection, we have $H^\circ_{X_a} \subseteq H^\circ_X$ since a path homotopic to a constant in $X_a$ is homotopic to a constant in $X$. The following lemma enlightens the relationship between these holonomy groups. 

\begin{lemm}\label{Lemma H_Xa}
	With notation as above:\\
	$(i)$ $H_X^\circ \simeq H_{X_a}^\circ$,\\
	$(ii)$ the group $H_{X_a}$ is compact.	
\end{lemm}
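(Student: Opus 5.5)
For $(i)$ I will use the local product structure of Remark~\ref{Coincidence of decompositions albanese holonomy}. The inclusion $H^\circ_{X_a} \subseteq H^\circ_X$ has already been noted, so only the reverse inclusion is needed. Near $p$ there is a simply connected chart $U$ with $\Hol_p(U,\omega) \simeq H_X^\circ$ and $(U,\omega) \simeq (U_0,\omega_0)\times\prod_{j\in J}(U_j,\omega_j)$. Here $\tau$ identifies $U_0$ with an open set of $V\subseteq A$ and $\prod_j U_j$ with an open subset $W$ of $X_a$ through $p$. Since $\mathcal{F}_0 = a^*\mathcal{T}_A$ is flat, the factor $(U_0,\omega_0)$ is flat. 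Hence the restricted holonomy of the product equals that of $\prod_j (U_j,\omega_j)$, acting trivially on $T_{A,a(p)}$.

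I then need the metric $\omega_a=i^*\omega$ to restrict on $W$ to the product metric $\prod_j\omega_j$. This holds because $\mathcal{T}_{X/A}$ is parallel and orthogonal to $a^*\mathcal{T}_A$, so $X_a\cap U$ is a totally geodesic slice $\{u_0\}\times\prod_j U_j$. Consequently
\[
H_X^\circ=\Hol_p(U,\omega)=\Hol_p(W,\omega_a)\subseteq H^\circ_{X_a},
\]
where the last inclusion uses that $W$ is simply connected. This gives $(i)$, compatibly with the block-diagonal embedding above.

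For $(ii)$ I will mimic \cite[Lemma~7.8]{GGK19}. By \cite{MW21} and Remark~\ref{Coincidence of decompositions albanese holonomy}, after a quasi-\'etale cover $X_a \simeq F\times Y'$ with $F$ rationally connected and $Y'$ a product of ICY and IHS factors. The identity component of $H_{X_a}$ is $H_{X_a}^\circ=H_X^\circ=\prod_j G_j^\circ$, which is compact. It is therefore enough to show that $H_{X_a}/H_{X_a}^\circ$ is finite, since a Lie group with compact identity component and finitely many components is compact. This quotient is the image of $\pi_1((X_a)_\reg)\to H_{X_a}/H^\circ_{X_a}$.

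For the factor $F$, I would like $\pi_1$ of the regular locus to be finite, or at least its image to be finite. For the ICY/IHS factors I would invoke the finiteness results \cite{FGSW26}, or directly the compactness of holonomy proved in \cite{GGK19} for $c_1=0$. The trouble is that $\omega_a$ need not be a product metric adapted to $F\times Y'$. So instead I would argue on the normalizer: $H_{X_a}$ is contained in $N(H^\circ_{X_a})\subseteq \U(T_pX_a)$, which is compact. The group $H_{X_a}$ permutes the $F_j$, and after the weak holonomy cover it preserves each $F_j$. On each irreducible non-symmetric factor, $N(G_j^\circ)/G_j^\circ$ is compact ($\U(1)$ or finite) modulo scalars.

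The main obstacle is the closedness of $H_{X_a}$ inside this compact normalizer. My first attempt is to show that the component group is finitely generated abelian-by-finite, with its image acting by unitary scalars on the factors. Those scalars are determined by holonomy of flat line bundles such as $\det\mathcal{F}_j$ restricted to $X_a$. The vanishing of $\widetilde q(X_a)$, shown in the previous section's Step~2, should force these characters to have finite image by \cite[Rem.~5.3]{Gue25}. Then $H_{X_a}$ is a finite extension of $H^\circ_{X_a}$, hence compact.
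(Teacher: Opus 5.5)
Your argument for $(i)$ is correct and is essentially the paper's. You use the local product chart, identify $X_a\cap U$ with the slice $\{u_0\}\times\prod_j U_j$ via Remark~\ref{Coincidence of decompositions albanese holonomy}, and conclude with the sandwich $H_X^\circ=\Hol_p(W,\omega_a)\subseteq H^\circ_{X_a}\subseteq H^\circ_X$. This is in fact a cleaner way to phrase the paper's claim $H_{\prod U_j}\simeq H^\circ_{X_a}$.

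For $(ii)$ there is a genuine gap at the one step that matters. After your normalizer reduction, everything rests on the claim that the characters $\pi_1(X_{a,\reg})\to\U(1)$ have finite image. These are the monodromies of the flat line bundles $\det\mathcal{F}_j|_{X_a}$. You justify this only by saying that $\widetilde q(X_a)=0$ ``should force'' it via \cite[Rem.~5.3]{Gue25}, and neither input addresses these characters:
\begin{itemize}
\item $\widetilde q(X_a)=0$ is a statement about reflexive $1$-forms on quasi-\'etale covers.
\item \cite[Rem.~5.3]{Gue25} concerns representations of $\pi_1(Y')$ for the ICY/IHS factor.
\item Your characters live on $\pi_1(X_{a,\reg})$. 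Nothing in the proposal passes from the regular locus to $X_a$, which is exactly the difficulty you raised for $F$ and then left unresolved.
\end{itemize}

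The missing idea is the one the paper uses: $X$ has been made maximally quasi-\'etale.
\begin{itemize}
\item Since $N_{\U(n)}(H_X^\circ)/H_X^\circ$ is a compact Lie group, hence linear, the map $\pi_1(X_\reg)\to H_X/H_X^\circ$ factors through $\pi_1(X)$ by \cite[Section~8.1]{GKP16}.
\item Hence the composite $\pi_1(X_{a,\reg})\to H_{X_a}/H_{X_a}^\circ\to H_X/H_X^\circ$ factors through a linear representation of $\pi_1(X_a)\simeq\pi_1(F)\times\pi_1(Y')=\pi_1(Y')$. Such a representation has finite image, and this is where \cite[Rem.~5.3]{Gue25} genuinely applies.
\item By $(i)$, the map $H_{X_a}/H^\circ_{X_a}\to H_X/H^\circ_X$ is injective, so $H_{X_a}/H^\circ_{X_a}$ is finite.
\end{itemize}
With this, your normalizer analysis and the passage to a weak holonomy cover are unnecessary.

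If you prefer to keep your own route, you must prove that $H_1(X_{a,\reg},\Q)=0$. One way is to take a resolution $\widehat{X_a}$ that is an isomorphism over $X_{a,\reg}$. The Gysin sequence, together with the negativity lemma (exceptional divisors are linearly independent in $H^2$), gives $b_1(X_{a,\reg})=b_1(\widehat{X_a})$. Then \cite{GKKP11} gives $b_1(\widehat{X_a})=2h^0(X_a,\Omega^{[1]}_{X_a})=0$. You would also need to check that the $\U(n_j)$ factors and the symmetric-type factors contribute only finite groups $N(G_j^\circ)/G_j^\circ$.
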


\begin{proof}
	For $(i)$, there exists a simply connected neighborhood $U$ of $p$ such that:
	$$
	(U, \omega) \simeq (U_0, \omega_0) \times \prod_j (U_j, \omega_j),
	$$
	and $\mathrm{pr}_j^*\mathcal{T}_{U_j} = \mathcal{F}_j$. Hence, thanks to 	Remark~\ref{Coincidence of decompositions albanese holonomy}, 	we get that $\prod_j (U_j, \omega_j)$ is an analytic chart of $(X_a, \omega_a)$.	Hence, we have $H_{\prod U_j} \simeq H_{X_a}^\circ$. But we have:
	$$H_{\prod_j U_j} = \prod_j H_{U_j} = \prod_j G_j^\circ = H_X^\circ.$$
	
	For $(ii)$, it suffices to show that $H_{X_a}/H_{X_a}^\circ$ is finite.  We have the following diagram whose each square is commutative:
	$$
	\xymatrix{
	\pi_1({X_a}_{\reg}) \ar@{^(->}[r] \ar@{->>}[d] \ar@/_3pc/@{->>}[dd] & \pi_1(X_\reg) \ar@{->>}[d] \\
	\pi_1(X_a) \ar@{^(->}[r] & \pi_1(X) \ar@{->>}[d] \\
	H_{X_a}/H_{X_a}^\circ \ar@{^(->}[r] & H_X/H_X^\circ
	}.
	$$
	
	By the Serre exact sequence associated to a fibration and the fact that $\pi_2(A) = 0$, horizontal arrows are injective, and the vertical arrows are surjective. The existence of the map $\pi_1(X) \to H_X/H_X^\circ$ is given by the fact that any linear representation of $\pi_1(X_\reg)$ factors through $ \pi_1(X)$ because $X$ is assumed maximally quasi-étale. Now, any representation of $\pi_1(X_a)$ has finite image, so that the image of the composition $\pi_1(X_a) \to \pi_1(X) \to H_X/H_X^\circ$ is finite. We deduce that the precomposition with $\pi_1({X_a}_\reg) \to \pi_1(X)$ remains of finite image. Because all squares are commutative, and the map $\pi_1({X_a}_\reg) \to H_{X_a}/H_{X_a}^\circ$ is surjective, we deduce that the map $H_{X_a}/H_{X_a}^\circ \to H_X/H_X^\circ$ has finite image. But this map is injective, so that $H_{X_a}/H_{X_a}^\circ$ is finite.
\end{proof}

We now show the compactness of the holonomy group.

\begin{theo}[= Theorem~\ref{Compactness of holonomy}]
	With notation as above, the group $H_X$ is compact.
\end{theo}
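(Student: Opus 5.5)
We prove compactness of $H_X$ by showing that $H_X/H_X^\circ$ is an extension of a finite group by a group that embeds into the compact group $\mathrm{Isom}(X_a,\omega_a)$. We work on the maximally quasi-\'etale cover used above. This is legitimate: by Proposition~\ref{Quasi-et invariance holonomy decomposition}, compactness of $H_{X'}$ for a finite quasi-\'etale cover $X'\to X$ implies compactness of $H_X$, since $H_{X'}$ has finite index in $H_X$ and $H_X$ is then a finite union of cosets of a compact group. Because $H_X^\circ$ is compact (it is a product of the compact groups $G_j^\circ$), the question reduces to controlling $H_X/H_X^\circ$.

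\textbf{Step 1: the block structure.} By Remark~\ref{Coincidence of decompositions albanese holonomy}, the splitting $T_pX = (a^*\mathcal{T}_A)_p \oplus T_pX_a$ is parallel. Moreover $H_X$ acts trivially on the first factor, so $H_X$ embeds into $\U(T_pX_a)$. By Lemma~\ref{Lemma H_Xa}$(i)$ we have $H_X^\circ = H_{X_a}^\circ$.

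\textbf{Step 2: the monodromy action.} Local constancy of $a$ gives a representation $M:\pi_1(A)\to\Aut(X_a)$, and $X \simeq (\widetilde{A}\times X_a)/\pi_1(A)$. Since $\pi_2(A)=0$, there is an exact sequence $1\to\pi_1(X_{a,\reg})\to\pi_1(X_\reg)\to\pi_1(A)\to 1$. Take a loop in $X_\reg$ that lifts a loop $\ell$ in $A$. Using the local product structure of Remark~\ref{Coincidence of decompositions albanese holonomy}, the $\mathcal{F}_0$-directions are flat and the metric $\omega$ is locally a product. Hence the horizontal leaves $\widetilde{A}\times\{x\}$ are totally geodesic and flat. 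The parallel transport along the horizontal lift of $\ell$ should therefore be $\mathrm{d}M(\ell)$ at $p$, composed with the parallel transport in $X_a$ from $M(\ell)(p)$ back to $p$.

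The key point is that $M(\ell)$ is an isometry of $(X_a,\omega_a)$. The fibers over $a(p)$ and over $\ell$-translates are identified isometrically, because locally $\omega = \omega_0 + \omega_{X_a}$ with $\omega_{X_a}$ independent of the base point. This follows from the product decomposition of Remark~\ref{Coincidence of decompositions albanese holonomy}, propagated along $A$ by analytic continuation. One could alternatively argue by uniqueness of the singular K\"ahler--Einstein-type solution of Monge--Amp\`ere on the fiber, as in Subsection~\ref{Identification quasi-et holonomy}.

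\textbf{Step 3: the extension.} Let $G\subset \Aut(X_a)$ be the closure of the image of $M$. This group is contained in $\mathrm{Isom}(X_{a,\reg},\omega_a)$. That isometry group is compact: an isometry of $X_{a,\reg}$ extends continuously to the metric completion, which is the compact space $X_a$ since $\Phi$ is bounded and continuous; then an Arzel\`a--Ascoli argument applies, or one can use that the image lies in the linear algebraic group preserving the polarization class, intersected with the isometries.

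Consider the map $H_X \to \mathrm{U}(T_pX_a)$. The group $H_X$ is generated by $H_{X_a}$ together with elements of the form $\mathrm{d}_pg$ composed with elements of $H_{X_a}$, for $g$ in the image of $M$ (transport from $g(p)$ back to $p$ lies in a fixed compact set of fibre transports). We aim to show that the closure $\overline{H_X}$ is contained in the compact set $K\cdot H_{X_a}$, where $K$ is the image of the set of pairs (isometry $g$ in $G$, path in $X_a$ from $g(p)$ to $p$). Since $H_{X_a}$ is compact by Lemma~\ref{Lemma H_Xa}$(ii)$, this gives compactness. Concretely, $H_X$ is a subgroup of the compact group $\{\phi\in \mathrm{U}(T_pX_a)\}$, so it suffices to show it is closed. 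For that, we show $H_X/H_X^\circ$ is the image of the compact-group-valued map described above modulo the finite-index subgroup $H_{X_a}/H_{X_a}^\circ$.

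\textbf{Expected obstacle.} The main difficulty is Step 3: proving that $H_X$ is closed, i.e.\ that the possibly infinite, dense-type image of $\pi_1(A)\cong\Z^{2q}$ in $H_X/H_X^\circ$ does not produce a non-closed subgroup. The first approach to try is to show that $H_X/H_X^\circ$ is a quotient of $G/(G\cap\text{stab})$ composed with finite data, so that it is automatically closed in the normalizer of $H_X^\circ$. Failing that, one can use that $M(\pi_1(A))$ lies in a compact Lie group whose identity component acts on $T_pX_a$ through $H_X^\circ$, which would make $H_X/H_X^\circ$ finite modulo a torus part.
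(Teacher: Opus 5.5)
\textbf{Gap: Step 3 does not prove that $H_X$ is closed.} Your Steps 1--2 and the compactness of the isometry group follow the paper's own lemmas: Lemma~\ref{Lemma H_Xa}, then Lemma~\ref{Holonomy vs monodromy} via flat lifts with $P_{\widetilde{\eta}}=\mathrm{d}g_p$, then Lemma~\ref{Compactness of aut}. One caveat there: boundedness and continuity of $\Phi$ do not by themselves identify the metric completion of $(X_{a,\reg},\omega_a)$ with $X_a$. The paper only uses compactness of the completion, and then recovers the limit as an automorphism via the projective embedding and uniqueness for Monge--Amp\`ere.

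The real problem is Step 3, which is the whole content of the theorem and is left unproved. Since $\U(T_pX_a)$ is compact, $\overline{H_X}$ is compact for free. So showing $\overline{H_X}\subseteq K\cdot H_{X_a}$ with $K$ compact gives nothing. What is needed is $H_X=\overline{H_X}$, i.e.\ finiteness of $H_X/H_{X_a}$.

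Your first fallback only restates the question. $H_X/H_{X_a}$ is the image of the dense subgroup $\mathrm{Im}(M)$, not of $G$. Since $f\mapsto(\mathrm{d}f_p)^{-1}\circ P_\gamma$ (mod $H_{X_a}$) is continuous, the image of the compact group $G$ is exactly the closure $\overline{H_X}/H_{X_a}$, which may be larger.

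\textbf{How the paper argues.} The map $\Xi:\Aut(X_a,\omega_a)\to \U(T_pX,\omega_p)/H_{X_a}$ is continuous (Lemma~\ref{Continuity of Xi}). The composite $\Xi\circ M=\rho$ surjects onto the countable group $H_X/H_{X_a}$. The group $G=\overline{\mathrm{Im}(M)}$ is compact with finitely many components. Each component is sent to a single point of the totally disconnected set $H_X/H_{X_a}$. Hence $H_X/H_{X_a}$ is finite, and $H_X$ is a finite extension of the compact group $H_{X_a}$.

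Be aware that the step ``each component goes to a point'' needs more than continuity of $\Xi$, which only places $\Xi(G)$ in the closure of $H_X/H_{X_a}$. One needs $\Xi$ to be trivial on $G^\circ$, i.e.\ $(\mathrm{d}f_p)^{-1}\circ P_\gamma\in H_{X_a}$ for $f\in G^\circ$. That is exactly your second fallback.

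This is a Kostant-type statement. On a compact Riemannian manifold, $\nabla\xi$ lies in the holonomy algebra for every Killing field $\xi$, and this is proved by a global integration argument. It fails on non-compact or incomplete manifolds (rotations of $\C$, or of a flat cone). So it has to be proved on $X_{a,\reg}$, not assumed.

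Once you have it, no torus part remains. $M^{-1}(G^\circ)$ is a finite-index subgroup of $\pi_1(A)$ on which the homomorphism $\rho$ vanishes, so $H_X/H_{X_a}$ is finite outright.
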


From Lemma~\ref{Lemma H_Xa}, we have the inclusions: $H_{X_a}^\circ = H_X^\circ \subseteq H_{X_a} \subseteq H_X$. Because $\pi_1({X_a}_\reg)$ is normal in $\pi_1(X_\reg)$, so is $H_{X_a}$ in $H_X$. Hence, there is a natural surjective map $\rho : \pi_1(A) \to H_X/H_{X_a}$.

$$
\xymatrix{
	1 \ar[r] & \pi_1({X_a}_\reg) \ar[d] \ar[r] & \pi_1(X_\reg) \ar[r] \ar[d] & \pi_1(A) \ar@{.>}[d]_\rho \ar[r] & 1 \\
	1 \ar[r] & H_{X_a}/H_{X}^\circ \ar[r] & H_X/H_X^\circ \ar[r] & H_X/H_{X_a} \ar[r] & 1
	}.
$$

The goal is to show that its image is finite. Let us denote by: 
$$
\Aut(X_a,\omega_a) = \lbrace f \in \Aut(X_a) \;\;|\;\; f^*\omega_a = \omega_a \rbrace
$$
the group of automorphisms of $X_a$ preserving the metric $\omega_a$. 

\begin{lemm}	\label{Compactness of aut}
	The group $\Aut(X_a, \omega_a)$ is compact.
\end{lemm}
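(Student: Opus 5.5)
The plan is to show that $\Aut(X_a,\omega_a)$ is a closed subgroup of a compact group, imitating the classical Myers--Steenrod argument for isometry groups. The difficulty is that $\omega_a$ is a genuine metric only on ${X_a}_\reg$, so instead I would use a compact ambient group built from holomorphic data. The first observation is that any $f \in \Aut(X_a)$ preserves $({X_a})_\reg$. Moreover, if $f^*\omega_a = \omega_a$, then $f$ preserves the cohomology class $[\omega_a] = [\theta_{|X_a}]$, which is the class of an ample divisor $H_a := H_{0|X_a}$. Hence $\Aut(X_a,\omega_a)$ sits inside the group $\Aut_{[H_a]}(X_a)$ of automorphisms fixing the numerical class of $H_a$. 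By the Fujiki--Lieberman theorem, or simply by embedding with a power of $H_a$ modulo the finite group $\mathrm{Pic}^0$-ambiguity, this group is a linear algebraic group. More concretely, it has finitely many components and its identity component is $\Aut^\circ(X_a)$, so it is a Lie group acting holomorphically.

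\textbf{Step 1: closedness.} The condition $f^*\omega_a = \omega_a$ is closed in the compact-open topology on ${X_a}_\reg$. Indeed, $\omega_a$ is smooth there, and convergence of holomorphic maps implies $\mathcal{C}^\infty_{loc}$ convergence. Hence $\Aut(X_a,\omega_a)$ is a closed subgroup of $\Aut_{[H_a]}(X_a)$.

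\textbf{Step 2: boundedness.} This is the main obstacle, since $\Aut_{[H_a]}(X_a)$ itself need not be compact (think of $\mathrm{PGL}$ acting on rational pieces). I would pick a point $q \in {X_a}_\reg$ and, via the Arzel\`a--Ascoli theorem on ${X_a}_\reg$, show that any sequence $f_k \in \Aut(X_a,\omega_a)$ has a subsequence converging locally uniformly on ${X_a}_\reg$. The maps $f_k$ are isometries of the incomplete Riemannian manifold $({X_a}_\reg,\omega_a)$, hence $1$-Lipschitz for the induced length distance $d_a$. By \cite{GGK19}-type results (the metric completion of $({X_a}_\reg, d_a)$ is homeomorphic to $X_a$ for bounded-potential Kähler--Einstein-type metrics), they extend to isometries of the compact metric space $(X_a, d_a)$. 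The isometry group of a compact metric space is compact, so a subsequence converges uniformly on $X_a$ to an isometry $f_\infty$. The inverses are handled in the same way.

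\textbf{Step 3: regularity of the limit.} Uniform limits of holomorphic maps are holomorphic on ${X_a}_\reg$. Since $X_a$ is normal and $f_\infty$ is continuous on $X_a$, $f_\infty$ is holomorphic everywhere, and the same holds for the limit of the inverses. Thus $f_\infty \in \Aut(X_a)$, and $f_\infty^*\omega_a = \omega_a$ by Step 1.

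This gives sequential compactness. Combined with the Lie group structure from the first observation, it shows that $\Aut(X_a,\omega_a)$ is compact. The delicate point is the identification of the metric completion with $X_a$ in Step 2. If that is unavailable, I would instead use the fact that $f_k$ preserves $\omega_a$ and hence the $L^2$ inner product on $H^0(X_a, mH_a)$ defined by $\omega_a^n$ and a $f$-invariant hermitian metric. This places the $f_k$ in a unitary group after embedding by $|mH_a|$, which is compact, and the closedness from Step 1 then finishes the argument.
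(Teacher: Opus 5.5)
Your proposal is correct and follows essentially the paper's route: extend the isometries to the compact metric completion and apply Arzel\`a--Ascoli, with the unitary action on $H^0(X_a, kH_{0|X_a})$ under a projective embedding as the way to see that the limit is an automorphism of $X_a$. The paper uses that embedding where your primary route invokes normality together with a homeomorphism between the completion and $X_a$, and it proves $f_\infty^*\omega_a=\omega_a$ via Monge--Amp\`ere uniqueness, where your smooth convergence on ${X_a}_\reg$ suffices once you add that bounded-potential currents do not charge analytic sets; for your unitary fallback, the invariant hermitian metric to take is $h_0e^{-\Phi_{|X_a}}$, whose curvature is $\omega_a$, so each $f$ preserves it up to a constant because a bounded pluriharmonic function on the compact normal $X_a$ is constant.
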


\begin{proof}
	We endow $X_a$ with the analytic topology, so that it is a completely metrizable space, and we see $\Aut(X_a, \omega_a)$ as a subset of $\mathcal{C}^0(X_a, X_a)$ endowed with the topology of uniform convergence.
	Let $(f_n)_n$ be a sequence of elements of $\Aut(X_a, \omega_a)$. Hence, they are isometries of $({X_a}_\reg, \omega_a)$. One can consider the metric completion $(X_a', d')$ of this space which is compact by \cite[Cor.~9.1]{GPSS23}, and each $f_n$ extends to an isometry of $(X_a', d')$. By Arzela-Ascoli theorem, we can extract a subsequence of $(f_n)$ converging uniformly to a map $f_\infty : X_a' \to X_a'$. Hence, it is holomorphic and smooth over ${X_a}_\reg$. Moreover, this map is an isometry. We recall that the metric $\omega_a$ belongs to the ample class $[H_0]_{|X_a}$, and for $n \in \Z_{>0}$, the automorphism $f_n$ preserves $\omega_a$, hence it preserves the class $\left[ H_0 \right]_{|X_a}$. Since klt singularities are rational, we have $h^1(X_a, \mathcal{O}_{X_a}) = h^0(X_a, \Omega^{[1]}_{X_a}) = 0$, we deduce that there is an isomorphism of line bundles $f_n^*{H_0}_{|X_a} \simeq {H_0}_{|X_a}$. Then, the precomposition by $f_n$ defines an automorphism of $H^0(X_a, k{H_0}_{|X_a})$ for any $k$. This isomorphism depends on the isomorphism $f_n^*{H_0}_{|X_a} \simeq {H_0}_{|X_a}$, but two such isomorphisms differ by a multiplicative constant. Hence, $f_n$ induces a unique automorphism of $\mathbb{P}(H^0(X_a, k{H_0}_{|X_a})^\vee)$ for any $k$. We fix $k$ such that the map $\varphi : X_a \to \mathbb{P}(H^0(X_a, k{H_0}_{|X_a})^\vee)$ defines an embedding. There is a unique morphism $f_n^*$ such that the following diagram is commutative:
	$$
	\xymatrix{
	X_a \ar[r]^{f_n} \ar[d]_{\varphi} & X_a \ar[d]^\varphi \\
	\mathbb{P}(H^0(X_a, k{H_0}_{|X_a})^\vee) \ar[r]_{f_n^*} & \mathbb{P}(H^0(X_a, k{H_0}_{|X_a})^\vee).
	}
	$$
	One can extract a subsequence of $(f_n^*)$ that converges pointwise\footnote{To do this, we endow the line bundle $H_0$ with a smooth hermitian metric $h$, so that $H^0(X_a, k{H_0}_{|X_a})$ can be naturally endowed with a hermitian norm $|\sigma|^2 := \int_X |\sigma|_h^2 \omega_a^d$ where $d := \dim X_a$. Hence, because each $f_n$ is $\omega_a$-invariant, the precomposition by $f_n$ is a unitary endomorphism of $H^0(X_a, k{H_0}_{|X_a})$. It turns out that we can extract a subsequence that converges in $\U(H^0(X_a, k{H_0}_{|X_a}), |\cdot|)$ to an endomorphism $f$. Since the reduction in $\PGL(d, \C)$ is continuous, the corresponding  extracted sequence of $(f_n^*)$ converges pointwise to $f^*$.} to an automorphism $f^*$ of $\mathbb{P}(H^0(X_a, k{H_0}_{|X_a})^\vee)$. It follows that this automorphism extends $f_\infty$ to $\mathbb{P}(H^0(X_a, k{H_0}_{|X_a})^\vee)$ and it preserves $X_{a \reg}$, hence $X_a$. Then, its restriction to $X_a$ is an automorphism of $X_a$ extending $f_\infty$. The convergence of the subsequence of $(f_n)$ to $f_\infty$ is uniform, since it is uniform on $X_\reg$ that is a dense open subset.\\
	
	The map $f_\infty$ is an automorphism of $X_a$. Because it is extended by an automorphism $f^*$ of a projective space, we deduce that $f_\infty^*{H_0}_{|X_a} \simeq {H_0}_{|X_a}$. One can assume that the reference smooth metric $\theta$ representative of $\left[ H_0 \right]$ is the restriction of the Fubini--Study metric through the embedding $X \hookrightarrow \mathbb{P}(H^0(X, kH_0)^\vee)$. Hence, restricting this metric to the subvariety $X_a$ guarantees the existence of a smooth function $\Psi$ on $X_a$ such that $f_\infty^*(i^*\theta) = i^*\theta + \ddc \Psi$. Since $f_\infty^*\omega_a = \omega_a$ over $X_{a \reg}$, it follows the following equality of Monge-Amp\`{e}re measures over $X_{a\reg}$:
	$$
	(i^*\theta + \ddc (\Psi + f_\infty^*i^*\Phi))^d = (i^*\theta + \ddc i^*\Phi)^d.
	$$
	This equality is true over the whole $X_a$ since potentials are bounded, and Bedford--Taylor theory guarantees that these measures do not charge analytic sets. By uniqueness of the solution, we have $\Psi + f_\infty^*i^*\Phi = i^*\Phi$, over $X_a$, hence, $f_\infty^*\omega_a = \omega_a$.
\end{proof}

\begin{rema}
	We deduce that $\Aut(X_a, \omega_a)^\circ$ is compact and the quotient group $\Aut(X_a, \omega_a)/\Aut(X_a, \omega_a)^\circ$ is finite.
\end{rema}

Now, we can define a map:

$$
\Xi : 
\left\lbrace
\begin{array}{ccc}
	\Aut(X_a, \omega_a) & \longrightarrow & U(T_pX, \omega_p)/H_{X_a} \\
	f & \longmapsto & (\mathrm{d}f_p)^{-1} \circ P_\gamma
\end{array}
\right.
$$
where $P_\gamma$ is the parallel transport along any path $\gamma$ from $p$ to $f(p)$ in $X_a$. This is well-defined, namely independent of the path $\gamma$: if $\gamma'$ is another path, we have: $(\mathrm{d}f_p)^{-1} \circ P_{\gamma'} = (\mathrm{d}f_p)^{-1} \circ P_\gamma \circ P_{\gamma^{-1}\gamma'}$, and $P_{\gamma^{-1}\gamma'}$ belongs to $H_{X_a}$.

\begin{lemm}\label{Continuity of Xi}
	The morphism $\Xi$ is continuous.
\end{lemm}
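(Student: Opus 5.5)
The plan is to prove continuity sequentially, as in Lemma~\ref{Compactness of aut}: $\Aut(X_a,\omega_a)$ carries the topology of uniform convergence and is metrizable, so it suffices to take $f_n \to f$ uniformly and show $\Xi(f_n) \to \Xi(f)$ in the quotient $\U(T_pX,\omega_p)/H_{X_a}$. The quotient carries the quotient topology of the projection $q : \U(T_pX,\omega_p) \to \U(T_pX,\omega_p)/H_{X_a}$. Since $q$ is continuous, it is enough to produce representatives $g_n \in q^{-1}(\Xi(f_n))$ converging in $\U(T_pX,\omega_p)$ to a representative of $\Xi(f)$.

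\textbf{Step 1: convergence of differentials.} Since $f(p)$ lies in ${X_a}_\reg$ (automorphisms preserve the regular locus), choose a small holomorphic coordinate ball $B$ around $f(p)$ contained in ${X_a}_\reg$. By uniform convergence, for $n$ large $f_n$ maps a fixed small ball $B'$ around $p$ into $B$. In coordinates these are holomorphic maps converging uniformly, so the Cauchy estimates give $C^\infty$ convergence on compacts. In particular $\mathrm{d}(f_n)_p \to \mathrm{d}f_p$, and hence $(\mathrm{d}(f_n)_p)^{-1} \to (\mathrm{d}f_p)^{-1}$. This uses that $\mathrm{d}f_p$ is invertible and that all these maps are $\omega$-unitary, so inverses are adjoints. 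Here $(\mathrm{d}f_p)^{-1}$ is viewed on $T_pX$ as acting by the identity on the $a^*\mathcal{T}_A$ factor, consistently with the embedding $H_{X_a}\subseteq H_X$.

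\textbf{Step 2: choice of paths.} Fix a path $\gamma$ in ${X_a}_\reg$ from $p$ to $f(p)$. Define $\gamma_n := \gamma \ast \delta_n$, where $\delta_n$ is the straight segment in the chart $B$ from $f(p)$ to $f_n(p)$. Parallel transport along a fixed path composed with short segments depends continuously on the endpoint: it solves a linear ODE whose coefficients, the Christoffel symbols of the smooth metric $\omega_a$ on $B$, are smooth. Hence $P_{\delta_n} \to \id$ under the coordinate identification of tangent spaces, and
\[
g_n := (\mathrm{d}(f_n)_p)^{-1}\circ P_{\gamma_n} = (\mathrm{d}(f_n)_p)^{-1}\circ P_{\delta_n}\circ P_\gamma \longrightarrow (\mathrm{d}f_p)^{-1}\circ P_\gamma .
\]
Composition must be read in the order making the maps endomorphisms of $T_pX$. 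The comparison of $T_{f_n(p)}$ with $T_{f(p)}$ is through the trivialization on $B$, and Step 1 is expressed in the same trivialization. By the well-definedness remark preceding the lemma, $q(g_n) = \Xi(f_n)$, which concludes.

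\textbf{Main obstacle.} The delicate point is the bookkeeping that makes ``convergence of $\mathrm{d}(f_n)_p$'' meaningful, since these maps have different targets $T_{f_n(p)}$. I will handle this by composing with $P_{\delta_n}$ first: the maps $P_{\delta_n}^{-1}\circ \mathrm{d}(f_n)_p : T_p \to T_{f(p)}$ all share the same target, and their convergence follows from Step 1 together with $P_{\delta_n}\to \id$ in the chart. A second subtlety is ensuring the Cauchy estimates apply, which requires $f(p)$ and all $f_n(p)$ to stay in a fixed compact subset of the regular locus. This holds because $f(p)$ is regular and $f_n(p)\to f(p)$.
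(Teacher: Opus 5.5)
Your proof is correct and rests on the same two analytic facts as the paper's: Cauchy estimates give convergence of the differentials $\mathrm{d}(f_n)_p$, and the parallel-transport ODE along short straight segments in a chart has coefficients tending to zero, so Gronwall's inequality gives $P_{\delta_n}\to\id$. The only difference is in handling the limit point. The paper first reduces to continuity at $\id_{X_a}$ via the multiplicativity of $\Xi$; this requires products of cosets in $U(T_pX,\omega_p)/H_{X_a}$ to be well defined, and on representatives it actually reads $\Xi(f\circ g)=\Xi(g)\Xi(f)$. You instead treat an arbitrary limit $f$ directly by appending $\delta_n$ to a fixed path from $p$ to $f(p)$, which avoids that reduction altogether.
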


\begin{proof}
We endow $U(T_pX, \omega_p)/H_{X_a}$ with the quotient topology. It suffices to show that $\Xi$ is continuous\footnote{Indeed, one can observe that $\Xi(f \circ g) = \Xi(f) \Xi(g)$ by using the fact that $f$ is an isometry, hence it commutes with parallel transport. Let us assume that $\Xi$ is continuous at $\id_{X_a}$, and let $(g_n)$ be a sequence converging to $g$. Because the composition by $g^{-1}$ is continuous, the sequence $\Xi(g^{-1} \circ g_n)$ converges to $\mathrm{id}_{T_pX_a}$, and $\Xi(g_n) = \Xi(g) \Xi(g^{-1} \circ g_n)$ converges to $\Xi(g)$.} at $\id_{X_a}$. Let $(f_n)_n$ be a sequence of elements of $\Aut(X_a, \omega_a)$ converging to $\id_{X_a}$. Hence, we have $(f_n(p))_n$ that converges to $p$. We can choose a chart around $p$, and paths $\gamma_n : t \in [0,1] \longmapsto tf_n(p)$.
	First, thanks to Cauchy formula, the pointwise convergence of $(f_n)_n$ implies that the derivatives of $f_n$ converge also to the derivative of $\id_{X_a}$. Hence, the matrices $(\mathrm{d}f_{n,p})^{-1}$ read in the chart converge to the identity matrix. It remains to show that, for any $v \in T_pX$, the parallel transports $P_{\gamma_n}(v)$ converge to $P_{\gamma_\infty}(v)$ where $\gamma_\infty$ is the constant loop at $p$. We study the solutions of the parallel transport equation in charts. For any $n \in \Z_{>0} \cup \lbrace \infty \rbrace$, they are of the form:
	 \begin{align*}
	 	\dot{y_n}(t) & = A_n(t) y_n(t) \\
	 	y_n(0) & = v
	 \end{align*}
where $y_n : \left[0, 1 \right] \to \C^n$ and $A_n : \left[0, 1 \right] \to \mathcal{M}(n, \C)$. The expression of $A_n$ depends on Christoffel symbols associated to the connection, and the coordinates of the path $\gamma_n$ and its derivatives. Namely, we have $(A_n(t))_{ij} = \sum_k \Gamma^k_{ij}(\gamma_n(t))\dot{\gamma_{n,k}}(t)$. Because $\gamma_n$ converges smootly to $\gamma_\infty$, the sequence $A_n$ converges smoothly to $A_\infty = 0$. Hence, we have to show that $P_{\gamma_n}(v) = y_n(1)$ converges to $y_\infty(1)$.
	
	For any $0 \leq x \leq 1$, we have:
	$$
	|y_n(x) - y_\infty(x)|  \leq \int_0^x |A_n| |y_n(t)| \mathrm{d}t,
	$$
	where $|A_n| = \sup_t |A_n(t)|$, and $|A_n(t)|$ is the operator norm. Now, by the Gronwall's inequality, we can show that $y_n$ is bounded, and by Lebesgue's dominated convergence, we get that $(y_n)$ converges pointwise to $y_\infty$, hence the result.
\end{proof}

We introduce now a representation $M : \pi_1(A) \to \Aut(X_a)$ arising from the local constancy of of the MRC-fibration. Indeed, we denote by $f : X \to Y$ the MRC-fibration (up to replace $X$ by a finite quasi-\'{e}tale cover). By \cite[Thm~1.1]{MW21}, there is a morphism $\pi_1(Y) \to \Aut(F)$, and we have $Y = A \times \prod Y_i \times \prod Z_j$. Moreover, the group $\pi_1(A)$ is naturally embedded in $\pi_1(Y)$ and since $X_a = F \times \prod Y_i \times \prod Z_j$ the group $\Aut(F)$ is naturally included in $\Aut(X_a)$. It turns out that we have a natural monodromy morphism $M : \pi_1(A) \to \Aut(X_a)$.

\begin{lemm}\label{Holonomy vs monodromy}
	With notation as above.\\
	$(i)$ The image of $M$ is included in $\Aut(X_a, \omega_a)$,\\
	$(ii)$ The image of the composition $\Xi \circ M$ is $H_X/H_{X_a}$, and $\Xi \circ M = \rho$.
\end{lemm}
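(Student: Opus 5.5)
For $(i)$ I will use the local product structure and the uniqueness of the Monge--Amp\`ere solution, in the same way as at the end of the proof of Lemma~\ref{Compactness of aut}. The locally constant fibration gives $X \simeq (\widetilde{A} \times X_a)/\pi_1(A)$, where $\pi_1(A)$ acts diagonally, by deck transformations on $\widetilde{A} = \C^q$ and through $M$ on $X_a$. Pull $\omega$ back to $\widetilde{A}\times X_a$. By Remark~\ref{Coincidence of decompositions albanese holonomy}, the splitting $\mathcal{T}_X = a^*\mathcal{T}_A \oplus \mathcal{T}_{X/A}$ is orthogonal and parallel, and $a^*\mathcal{T}_A = \mathcal{F}_0$ is flat. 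On the regular locus the pulled-back metric is therefore locally a Riemannian product. Since $\widetilde{A}$ is simply connected and the horizontal distribution is integrable and parallel, the product structure should be global: the pullback equals $\omega_{\mathrm{flat}} \oplus \omega_a$ on $\widetilde{A} \times X_{a,\reg}$, with the vertical factor independent of the base point. Here I will use that every fiber $\{b\}\times X_a$ is totally geodesic and that parallel translation along horizontal directions gives holomorphic isometries between fibers.

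Deck invariance of the pulled-back metric under $\gamma \in \pi_1(A)$ then yields $M(\gamma)^*\omega_a = \omega_a$ on $X_{a,\reg}$. The main obstacle is to rigorously show that the vertical metric does not depend on the base point $b\in\widetilde{A}$. If the direct geometric argument is delicate, I will use the Monge--Amp\`ere characterization instead. Each fiber metric $\omega_b$ lies in $[H_0]_{|X_a}$, which is constant because the family is locally trivial and $h^1(X_a,\mathcal{O})=0$. By Proposition~\ref{Polystability of vanishing slopes}$(i)$, the Ricci form $\alpha$ vanishes along $\mathcal{F}_0$. Restricted to the fiber, $\alpha$ equals the Ricci form of $\omega_b$, so $\omega_b$ solves a Monge--Amp\`ere equation whose right-hand side is determined by $\alpha_{|X_a}$. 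Parallelism then shows that $\partial_b\omega_b = 0$. Uniqueness as in \cite[Thm.~6.3]{EGZ09} then shows that $M(\gamma)^*\omega_a$ and $\omega_a$ coincide, because they solve the same normalized equation in the same class.

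For $(ii)$, fix $\gamma\in\pi_1(A)$ and take a loop $c$ at $a(p)$ representing it. Lift $c$ horizontally, with respect to the flat product structure, to a path $\tilde c$ from $p$ to $p' := M(\gamma)^{-1}(p)$ (or to $M(\gamma)(p)$, depending on conventions) in $X_a$. Concatenate $\tilde c$ with a path $\delta$ in $X_a$ from $p'$ back to $p$. The result is a loop in $X_\reg$ mapping to $\gamma$, so its holonomy class modulo $H_{X_a}$ is $\rho(\gamma)$ by the definition of $\rho$ from the exact sequence. Along the horizontal lift, parallel transport acts as the identity on $\mathcal{F}_0$. On $\mathcal{T}_{X_a}$ it coincides with the differential of the identification of fibers, which after the deck identification is $\mathrm{d}M(\gamma)$. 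This holds because horizontal translation is an isometric holomorphic identification and commutes with parallel transport. Composing with $P_\delta$ inside $X_a$ gives exactly $(\mathrm{d}M(\gamma)_p)^{-1}\circ P_\delta$ modulo $H_{X_a}$, which is $\Xi(M(\gamma))$. Hence $\Xi\circ M=\rho$, and since $\rho$ is surjective onto $H_X/H_{X_a}$, the image of $\Xi\circ M$ is all of $H_X/H_{X_a}$.
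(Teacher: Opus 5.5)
Your proof is correct within the paper's framework, but it is organized differently. The paper works chart by chart. It builds a flat lift $\widetilde{\eta}$ of a loop in $A$ by patching the trivializations $\tau_i$. It then uses Remark~\ref{Coincidence of decompositions albanese holonomy} to see that $P_{\widetilde{\eta}}$ acts on $T_pX_a$ as the composite of the $\mathrm{d}g_{ij}$, i.e.\ as $\mathrm{d}g_p$, and gets (i) because parallel transport is isometric. For (ii) it starts from an arbitrary loop $\alpha$ in $X_\reg$ and constructs by hand a homotopy $\alpha^{-1}\simeq\kappa^{-1}\widetilde{\eta}$ with $\kappa$ in the fibre.

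You instead work on $\C^q\times X_a$, where the aligned parallel splitting makes the pulled-back metric a global product $\omega_{\mathrm{flat}}\oplus\omega_a$. The vertical part is locally constant in $b$ by the local de Rham product, hence constant on the connected base; this settles the obstacle you flag. So (i) is just deck invariance. In (ii) you use that $\rho$ is already well defined on $\pi_1(A)$, by exactness of $\pi_1(X_{a,\reg})\to\pi_1(X_\reg)\to\pi_1(A)$ and the fact that loops in $X_{a,\reg}$ have holonomy in $H_{X_a}$. This lets you pick one convenient loop per class and eliminates the paper's homotopy bookkeeping. The underlying mechanism is the same: horizontal transport identifies fibres isometrically and acts on vertical vectors through $\mathrm{d}M$.

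Three smaller points.

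\emph{The alignment input.} What you really use is the second half of Remark~\ref{Coincidence of decompositions albanese holonomy}: the trivialization restricts to $(\tau_1,\tau_2)$ on the local de Rham product, so the slices $\C^q\times\{f\}$ are leaves of $\mathcal{F}_0$. This does not follow from the orthogonal parallel splitting alone, since two flat complements of $\mathcal{T}_{X/A}$ can differ by fibrewise holomorphic vector fields. The paper's Step~1 rests on the same assertion. You can make it automatic by letting $M$ be the monodromy of the $\C^q$-action generated by the commuting vector fields that span $\mathcal{F}_0\simeq\mathcal{O}_X^{\oplus q}$.

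\emph{The Monge--Amp\`ere fallback.} It is not an independent argument. It invokes $\partial_b\omega_b=0$ anyway, and its uniqueness step would need $M(\gamma)^*(\alpha_{|X_a})=\alpha_{|X_a}$, which is the same base-point independence. Once you have that, deck invariance concludes on $X_{a,\reg}$. Only the Bedford--Taylor extension to $X_a$ remains, as in the paper.

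\emph{Bookkeeping in (ii).} With $p'=M(\gamma)^{-1}(p)$, the transport along your loop on $T_pX_a$ is $P_\delta\circ\mathrm{d}(M(\gamma)^{-1})_p$. This is not literally of the form defining $\Xi$. Rewriting it as $(\mathrm{d}M(\gamma)_p)^{-1}\circ P_{M(\gamma)\circ\delta}$ uses that $M(\gamma)$ is an isometry and hence commutes with parallel transport, i.e.\ part (i). A different orientation convention for $M$ only replaces $\rho(\gamma)$ by $\rho(\gamma)^{-1}$ and does not change the image.
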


\begin{proof}
	\textbf{Step 1 : Construction of flat lifts of paths of $A$}\\
	Let $\eta : \left[ 0,1 \right] \to A$ be a loop based at $a(p)$ in the Albanese variety. We construct a unique smooth lift $\widetilde{\eta} : \left[0,1 \right] \to X_\reg$ such that $\dot{\widetilde{\eta}}(t) \in (a^*\mathcal{T}_A)_{\widetilde{\eta}(t)}$ for any $t$, and $\widetilde{\eta}(0) = p$. The construction is as follows.
	We cover the loop $\eta$ by a finite number of trivializing open subsets $(U_0, \cdots, U_N)$. By this, we mean that each $U_i$ is an open subset of $A$ such that there exists an isomorphism $\tau_i : a^{-1}(U_i) \simeq U_i \times X_a$. By local constancy of the fibration, there exists for any $0 \leq i,j \leq N$ an automorphism $g_{ij} \in \Aut(X_a)$ such that the composition:
	$$
	U_{ij} \times X_a \overset{\tau_j^{-1}}{\longrightarrow} a^{-1}(U_{ij}) \overset{\tau_{i}}{\longrightarrow}  U_{ij} \times X_a
	$$
	is of the form $(x, f) \longmapsto (x, g_{ij}(f))$ (here, we have denoted $U_{ij} = U_i \cap U_j$).
	
	Let $m \in \Z_{> 0}$ be an integer such that for any $0 \leq k \leq m-1$, there exists $U_{i_k}$ such that $\eta(\left[ \frac{k}{m}, \frac{k+1}{m} \right]) \subseteq U_{i_k}$. Hence we construct $\widetilde{\eta}$ by induction on $k$. For $k = 0$, we set:
	$$\widetilde{\eta} : t \in \left[ 0, \frac{1}{m} \right] \to \tau_{i_0}^{-1}(\eta(t), f_0) \in X,
	$$
	where $\tau_{i_0}^{-1}(\eta(0), f_0) = p$. If $\widetilde{\eta} : \left[ 0, \frac{k}{m} \right] \to X$ is constructed, we extend it to $\left[0, \frac{k+1}{m} \right]$ by the following. We denote $\tau_{i_k}^{-1}(\widetilde{\eta}(\frac{k}{m})) = (\eta(\frac{k}{m}), f_{k})$. Hence, we extend $\widetilde{\eta}$ by:
	$$ \widetilde{\eta} : t \in \left[ \frac{k}{m}, \frac{k+1}{m} \right] \longmapsto \tau_{i_k}^{-1}(\eta(t), f_k) \in X.
	$$
	
	It remains to show that $\widetilde{\eta}$ satisfies the sought properties, namely $\dot{\widetilde{\eta}}(t) \in (a^*\mathcal{T}_A)_{\widetilde{\eta}(t)}$. It suffices to observe that trivializations $\tau_i$ send tangent vectors in $a^*\mathcal{T}_A$ in $\mathrm{pr}_1^*\mathcal{T}_{U_i}$ thanks to commutativity of the trivialization with the Albanese morphism and projections. Then, the derivative of the path $(\eta(t), f_k)$ belongs to $\mathrm{pr}_1^*\mathcal{T}_{U_i}$, so $\dot{\widetilde{\eta}}(t)$ belongs to $a^*\mathcal{T}_A$.

\begin{rema}\label{parallel transport of the flat lift vs monodromy}
    One can establish a link between the parallel transport of the flat lift and the monodromy. Let $g$ be an element of the image of $M$. Hence, there exists a smooth loop $\eta$ in $A$ such that $M(\eta) = g$. By Step 1, there exists a flat lift $\widetilde{\eta}$ of $\eta$. Since we have $f_k = g_{i_k i_{k-1}}(f_{k-1}) $, the automorphism $M(\eta)$ corresponds to:
	$$
	M(\eta) = g_{i_0i_{m-1}} \circ \cdot \circ g_{i_1i_0}, 
	$$
	and we have $\widetilde{\eta}(1) = \tau_{i_0}^{-1}(\eta(0), M(\eta)f_0)$.
    We consider the parallel transport along this path, denoted $P_{\widetilde{\eta}}$. Let $v \in T_pX_a$ be a tangent vector of the fiber of $a$ passing through $p$. Then, $P_{\widetilde{\eta}}$ consists of local parallel transports along flat directions. More precisely, we have:
    $$
    P_{\widetilde{\eta}}(v) = \mathrm{d}g_{i_0i_{m-1}} \circ P_{\widetilde{\eta}_{m-1}} \circ \mathrm{d}g_{i_{m-1}i_{m-2}} \circ \cdots \circ \mathrm{d}g_{i_1i_0} \circ P_{\widetilde{\eta}_{0}}(v),
    $$
    where $P_{\widetilde{\eta}_{k}}$ is the parallel transport computed through the trivialization $\tau_{i_k}$. Thanks to Remark~\ref{Coincidence of decompositions albanese holonomy}, the computation of $P_{\widetilde{\eta}_{k-1}}$ is local, and regarding to the local decomposition of K\"ahler manifolds:
    $$
    (U,\omega) \simeq (U_0, \omega_0) \times \prod_{j \in J} (U_j, \omega_j),
    $$
    we are reduced to the case where the path is included in a fiber in the product (namely, it is included in a submanifold of the form $U_0 \times \lbrace \mathrm{pt} \rbrace$), and we compute the parallel transport of some orthogonal vector (namely, this vector belongs to $\mathcal{T}_{\prod_j U_j}$). In this case, the parallel transport is the identity, so that:
    $$
    P_{\widetilde{\eta}}(v) = \mathrm{d}g_{i_0i_{m-1}} \circ \mathrm{d}g_{i_{m-1}i_{m-2}} \circ \cdots \circ \mathrm{d}g_{i_1i_0}(v)  = \mathrm{d}g_p(v).
    $$ 
\end{rema}

\noindent
\textbf{Step 2 : Proof of the lemma}\\
Let us prove $(i)$. From Remark~\ref{parallel transport of the flat lift vs monodromy} and the fact that parallel transport is an isometry of euclidean spaces, we have $\mathrm{d}g_p : (T_pX_a, \omega_{a, p}) \to (T_{g(p)}X_a, \omega_{a, g(p)})$ that is an isometry. This works for any point $p$ in the same fiber $X_a$, hence $g$ is an isometry of $({X_a}_\reg, \omega_{a \reg})$. Hence we have $g^*\omega_a = \omega_a$ over $X_{a \reg}$ hence on the whole $X_a$ by the same arguments as the end of the proof of Lemma~\ref{Compactness of aut} (namely, $g^*\omega_a^d$ is a Monge--Amp\`{e}re measure that does not charge analytic subsets). Hence, $g \in \Aut(X_a, \omega_a)$.\\

Now, we prove $(ii)$. We consider $P_\alpha$ a representative of an element of $H_X/H_{X_a}$, where $\alpha$ is a loop of $X$. Hence, one can construct the flat lift $\widetilde{\eta}$ of the loop $\eta := a \circ \alpha^{-1}$. We show that there exists a unique path $\kappa$ from $p$ to $\widetilde{\eta}(1)$ up to homotopy such that $\alpha^{-1}$ is homotopic to $\kappa^{-1}\widetilde{\eta}$. 
We cover like in Step 1 by trivializing open subsets. We have $\eta (\left[ \frac{k}{m}, \frac{k+1}{m} \right]) \subseteq U_{i_k}$, and we have $\tau_{i_k}(\alpha^{-1}(t)) = (\eta(t), \kappa_k(t))$. for $\frac{k}{m} \leq t \leq \frac{k+1}{m}$. We set:
$$
\kappa : 
\left\lbrace
\begin{array}{cccc}
	\left[ 0, 1 \right] & \longrightarrow & X_a & \\
	t & \longmapsto	& g_{i_0i_1} \circ & \cdots \circ g_{i_{k-1}i_k}\kappa_k(t) \text{ if } \frac{k}{m}\leq t \leq \frac{k+1}{m}
\end{array}
\right.
$$
This is well-defined and continuous, since $\kappa_{k-1}(\frac{k}{m}) = g_{i_{k-1}i_k}\kappa_k(\frac{k}{m})$. It remains to show that $\alpha^{-1}$ is homotopic to $\kappa^{-1}\widetilde{\eta}$. To do so, we make two observations:
\begin{itemize}
	\item The path $\tau_{i_k}^{-1}(\alpha^{-1}_{| \left[\frac{k}{m}, \frac{k+1}{m}\right]})$ is homotopic to the concatenation of the paths $\left( \eta(\frac{k+1}{m}), \kappa_k \right) \left(\eta, \kappa_k(\frac{k}{m})\right)$  by the product structure in the trivialization.\\
	\item On $\left[0, \frac{1}{m} \right]$, for any $0 \leq p \leq k-1$ the following paths in $U_{i_k} \times X_a$
	\scriptsize
	$$\left( \eta\left( \frac{k+ \cdot}{m} \right), g_{i_ki_{k-1}} \circ \cdots \circ \cdots g_{i_{p+1}i_p}\kappa_p \left( \frac{p+1}{m} \right) \right) \left( \eta\left( \frac{k}{m} \right), g_{i_ki_{k-1}} \circ \cdots \circ \cdots g_{i_{p+1}i_p}\kappa_p \left( \frac{p + \cdot}{m} \right) \right),$$
	
	$$
	\left( \eta\left( \frac{k+1}{m} \right), g_{i_ki_{k-1}} \circ \cdots \circ \cdots g_{i_{p+1}i_p}\kappa_p \left( \frac{p + \cdot}{m} \right) \right) \left( \eta\left( \frac{k+ \cdot}{m} \right), g_{i_ki_{k-1}} \circ \cdots \circ \cdots g_{i_{p+1}i_p}\kappa_p \left( \frac{p}{m} \right) \right) 
	$$
	\normalsize
	are homotopic.
\end{itemize}
By induction, we can deduce that $\alpha^{-1}$ is homotopic to $\kappa^{-1}\widetilde{\eta}$. Hence, $\alpha$ is homotopic to $\widetilde{\eta}^{-1}\kappa$. From the equality $P_{\widetilde{\eta}} = \mathrm{d}g_p$ of Remark~\ref{parallel transport of the flat lift vs monodromy}, we deduce that:
	$$\rho(a_*\alpha) = P_\alpha = P_{\widetilde{\eta}^{-1}\kappa} = P_{\widetilde{\eta}}^{-1} \circ P_\kappa = (\mathrm{d}g_p)^{-1} \circ P_\kappa = \Xi(M(a_*\alpha)).$$
\end{proof}

\begin{proof}[Proof of Theorem~\ref{Compactness of holonomy}]
	We consider the closure $G$ of $\mathrm{Im}(M)$ in $\Aut(X_a, \omega_a)$. It is a compact subgroup by Lemma~\ref{Compactness of aut}. Hence, $G$ has a finite number of components. Each connected component is sent in a connected component of $H_X/H_{X_a}$ by $\Xi$ by Lemma~\ref{Continuity of Xi}. The latter group is totally disconnected in $U(n)/H_{X_a}$, so that $\Xi(G)$ is finite. But $\Xi(G)$ contains the image of $\Xi \circ M = \rho$ by Lemma~\ref{Holonomy vs monodromy} which is surjective, showing that $H_X/H_{X_a}$ is finite. We deduce that $H_X$ is a finite extension of a compact group, which is compact.
\end{proof}

As a consequence of Theorem~\ref{Compactness of holonomy}, the group $H_X/H_X^\circ$ is finite and we can construct a holonomy cover of $X$ similarly to \cite[Lemma~7.8]{GGK19}.

\begin{defi}
    A \emph{holonomy cover of $X$} is a finite quasi-\'{e}tale cover $f : X^\hol \to X$ given by the kernel of the map $\pi_1(X_\reg) \to H_X/H_X^\circ$ under the correspondence \cite[Thm.~23]{CGG24}. Under identifications explained in Section~\ref{Identification quasi-et holonomy}, we have:
    $$H_{X^\hol} = H_{X^\hol}^\circ = H_X^\circ.$$
\end{defi}

\section{Irreducible pieces}

In the case where $K_X$ is numerically trivial, the singular Beauville--Bogomolov decomposition shows that each variety can be split into irreducible pieces up to some quasi-\'{e}tale cover. These irreducible pieces are abelian varieties, irreducible Calabi--Yau and irreducible holomorphic symplectic varieties. In our case, the result of \cite{CDP14} suggests that new irreducible pieces appear, that have algebraic properties (namely, they are rationally connected). We show that these pieces are exactly the one being hereditarily rationally connected in the sense of \cite[Def.~4.3]{GLM23}.

\begin{defi}
	We say that a projective variety $X$ is \emph{hereditarily rationally connected (HRC)} if for any quasi-\'{e}tale morphism $X' \to X$, the variety $X'$ is rationally connected (RC). 
\end{defi}

\begin{prop}\label{Link holonomy HRC}
	Let $X$ be a klt projective variety such that $-K_X$ is semipositive. Assume that the restricted holonomy acts irreducibly at a point $p \in X_\reg$. The following assertions are equivalent:\\
	$(i)$ $\Hol_p(X_\reg, \omega_\reg) = \U(n)$ or $(X_\reg, \omega_\reg)$ is locally symmetric and $\widetilde{q}(X) = 0$.\\
	$(ii)$ $X$ is HRC.
\end{prop}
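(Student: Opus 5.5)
We prove $(i)\Rightarrow(ii)$ and $(ii)\Rightarrow(i)$ separately. Both directions rest on three tools: the Bochner principle (Proposition~\ref{Polystability of vanishing slopes} and Remark~\ref{Bochner-Kobayashi property}), the invariance of restricted holonomy under quasi-\'etale covers (Proposition~\ref{Quasi-et invariance holonomy decomposition}), and the identification $\mathrm{rk}(\mathcal{F}_0)=\widetilde{q}(X)$ (Theorem~\ref{Identification euclidean factor augmented irregularity}).

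\emph{Direction $(i)\Rightarrow(ii)$.} Let $X'\to X$ be quasi-\'etale. By Proposition~\ref{Quasi-et invariance holonomy decomposition}, $H^\circ_{X'}=H^\circ_X$, and this group still acts irreducibly. Since $H_{X'}$ has finite index in $H_X$, in the $\U(n)$ case we get $H_{X'}^\circ\supseteq\SU(n)$. In the locally symmetric case, $X'$ stays locally symmetric, irreducible and non-flat. Non-flatness holds because $\widetilde{q}(X)=0$: a flat irreducible factor would be one-dimensional with trivial restricted holonomy, hence lie in $\mathcal{F}_0$, forcing $\mathrm{rk}\mathcal{F}_0>0$.

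To show $X'$ is rationally connected, we use the MRC fibration. By \cite{MW21}, after a further quasi-\'etale cover $X''\to X'$ the MRC fibration $X''\to Y$ is locally constant, and $Y$ has $K_Y\equiv 0$. Then $\mathcal{T}_{X''}$ contains the proper, nonzero (if $\dim Y>0$) subsheaf $\mathcal{T}_{X''/Y}$. Dually, $f^{[*]}\Omega^{[1]}_Y$ is a subsheaf of $\Omega^{[1]}_{X''}$ of slope $0$. By Proposition~\ref{Polystability of vanishing slopes}, with $m=1$, it is parallel, which contradicts irreducibility of $H^\circ_{X''}=H^\circ_X$ unless $Y$ is a point. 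So $X''$, hence $X'$, is RC. As a sanity check in the symmetric case, Remark~\ref{Bochner-Kobayashi property} together with $\lambda>0$ (Kähler--Einstein, as in the proof of Theorem~\ref{Bound of symmetric tensors}) kills all $\Sym^{[m]}\Omega^{[1]}$. In the $\U(n)$ case the same vanishing holds by irreducibility of $\Sym^m$ of the standard representation. In either case, \cite[GKKP]{GKKP11}-type criteria then corroborate rational connectedness.

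\emph{Direction $(ii)\Rightarrow(i)$.} Assume $X$ is HRC. Then $h^0(X',\Omega^{[1]}_{X'})=0$ for every quasi-\'etale cover $X'$ by \cite[Thm.~5.1]{GKKP11}, so $\widetilde{q}(X)=0$. By Berger--Simons, if $X$ is not locally symmetric then $H^\circ_X\in\{\U(n),\SU(n),\Sp(n/2)\}$, and we must exclude the last two. Pass to the holonomy cover $X^{\hol}$, which exists by Theorem~\ref{Compactness of holonomy}; there $H_{X^\hol}=H^\circ_X$. If this group is contained in $\SU(n)$, the holonomy principle gives a nonzero parallel section of $\Omega^{[n]}_{X^\hol}$, i.e. of $\mathcal{O}(K_{X^\hol})$ away from codimension $2$. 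A rationally connected klt variety has $h^0(X^{\hol},\mathcal{O}(mK))=0$ for all $m>0$, because it is uniruled; this contradicts $X^\hol$ being RC. In the locally symmetric case nothing further is required, since $\widetilde{q}(X)=0$ has already been shown. Hence $H_X\supseteq H_X^\circ=\U(n)$, giving $H_X=\U(n)$.

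\emph{Main obstacle.} The delicate step is the MRC argument in $(i)\Rightarrow(ii)$. One must check that the pullback $f^{[*]}\Omega^{[1]}_Y$ really is a saturated subsheaf of $\Omega^{[1]}_{X''}$ with vanishing $[\omega]$-slope. This follows from local constancy of $f$ and from $\Omega^{[1]}_Y$ being numerically flat when $K_Y\equiv0$. It must also be \emph{proper}: if $Y=X''$ then $X''$ has $K\equiv0$, and $\Omega^{[1]}$ is then polystable of slope zero but irreducible. I would handle that subcase separately: in the $\U(n)$ case $K\equiv 0$ forces $H^\circ\subseteq\SU(n)$, a contradiction; in the symmetric case $\lambda>0$ contradicts $\Ric\,\omega=0$.
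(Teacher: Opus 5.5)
Your direction $(i)\Rightarrow(ii)$ is essentially the paper's argument. You pass to the cover of \cite{MW21} carrying a locally constant MRC fibration $f\colon X''\to Y$. You apply Proposition~\ref{Polystability of vanishing slopes} to the slope-zero summand $f^{[*]}\Omega^{[1]}_Y$, invoke irreducibility of $H^\circ_{X''}=H^\circ_X$, and rule out $f$ being an isomorphism via Ricci-flatness. In the symmetric case you get non-flatness from Theorem~\ref{Identification euclidean factor augmented irregularity} rather than from the paper's abelian-cover argument, which amounts to the same thing.

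Your direction $(ii)\Rightarrow(i)$ takes a different and shorter route. The paper, on the holonomy cover, deduces $\alpha=0$ and shows $c_1(-K_X)=0$ by a \v{C}ech argument with pluriharmonic functions. It then passes to a further cover with canonical singularities and intersects $K_{\widehat X}=\pi^*K_X+\sum a_iE_i$ ($a_i\geq 0$) with a covering family of rational curves. You use the holonomy principle directly: $H_{X^\hol}\subseteq\SU(n)$ gives a parallel, hence holomorphic and nowhere vanishing, section of $\Omega^{[n]}_{X^\hol}$. Such a section cannot exist on a rationally connected klt variety by \cite[Thm.~5.1]{GKKP11} with $p=n$, the same result you already used for $\widetilde q(X)=0$. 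This bypasses both the identification of $c_1$ and the reduction to canonical singularities; your section even shows $K_{X^\hol}\sim 0$ directly.

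Several statements need correcting, though none breaks the argument.
\begin{itemize}
\item The claim that a rationally connected klt variety has $h^0(\mathcal{O}(mK))=0$ for all $m>0$ ``because it is uniruled'' is false for $m>1$. The rationally connected finite quotients of abelian varieties \cite{OT15} are klt with $K\sim_\Q 0$. You only need $m=1$, which holds because klt singularities are rational: $\pi_*\omega_{\widehat X}=\omega_X$ on a resolution, and $\widehat X$ is uniruled.
\item $\Omega^{[1]}_Y$ is not numerically flat when $K_Y\equiv 0$, since IHS or ICY factors have $c_2\neq 0$. You only need $c_1(f^{[*]}\Omega^{[1]}_Y)=f^*c_1(K_Y)=0$, with saturation coming from the splitting in \cite{MW21}.
\item In the $\U(n)$ case you only record $H^\circ_{X'}\supseteq\SU(n)$. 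That would not contradict $H^\circ\subseteq\SU(n)$ in the subcase $Y=X''$. You need $H^\circ_X=\U(n)$, which holds because the restricted holonomy is the identity component of the connected group $H_X=\U(n)$, together with $H^\circ_{X''}=H^\circ_X$.
\item You should say why $K_{X''}\equiv 0$ forces $\Ric \omega''=\alpha''=0$: one has $\alpha''\geq 0$ and $\int\alpha''\wedge\theta^{n-1}=0$.
\item Drop the ``sanity check''. Vanishing of all symmetric differentials does not imply rational connectedness: K3 surfaces have none.
\end{itemize}
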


\begin{proof}

We first show $(i) \Longrightarrow (ii)$.
We consider $\gamma : X' \to X$ a finite quasi-\'{e}tale morphism. By \cite{MW21}, there exists a further quasi-\'{e}tale morphism $\gamma' : X'' \to X'$ and a MRC-fibration $f : X'' \to Y$ with $c_1(Y) = 0$, that is locally constant. We denote by $\omega'' := (\gamma \circ \gamma')^*\omega$ and $\alpha'' := (\gamma \circ \gamma')^*\alpha$.

It suffices to show that $X''$ is RC to prove that $X'$ is RC, or in other words, the variety $Y$ is a point. From Theorem~4.7 of the aforementioned article, the MRC-fibration comes from a splitting of tangent sheaves, that yields a splitting on the reflexive cotangent sheaf: $\Omega^{[1]}_{X''} = f^{[*]} \Omega^{[1]}_Y \oplus \Omega^{[1]}_{X''/Y} $. We observe that $f^{[*]} \Omega^{[1]}_Y$ is a direct summand of $\Omega^{[1]}_{X''}$ with vanishing slope. We can apply Proposition~\ref{Polystability of vanishing slopes} to $f^{[*]}\Omega^{[1]}_Y$ to get that it is a polystable subsheaf, parallel with respect to the Chern connection of $\omega''$. In particular, at a point $q \in X''_\reg \cap f^{-1}(Y_\reg)$ over $p$, we have the decomposition:
$(f^*\Omega^1_Y)_q \oplus (f^*\Omega^1_Y)^\perp_q = \Omega^1_{X'', q}$ that stabilizes the restricted holonomy. The action of the holonomy being irreducible  on $\Omega^1_{X'', q}$, we have $(f^*\Omega^1_Y)_q = \Omega^1_{X'', q}$ or $(f^*\Omega^1_Y)_q = 0$.\\

Let us first assume that $\Hol_p(X_\reg, \omega_\reg) = \U(n)$. Hence, by Proposition~, we have $\Hol_q(X''_\reg, \omega''_\reg) = \U(n)$. By contradiction, if $(f^*\Omega^1_Y)_q = \Omega^1_{X'', q}$, then, $f$ is an isomorphism. In particular, $K_{X''}$ is numerically trivial. We deduce that:
$$
	\int_{X''} \alpha'' \wedge \theta^{n-1} = \left[ -K_{X''} \right] \cdot \left[ H \right]^{n-1} = 0,
$$
hence, by semi-positivity of $\alpha''$, we deduce that $\alpha'' = 0$, and $\omega''$ is a Ricci-flat metric on $X''$. From \cite[Part.~5.1]{Gue25}, we have $\Hol_q(X''_\reg, \omega''_\reg) = 1, \SU(n)$ or $\Sp (n/2)$, a contradiction. We deduce that $f^*\Omega^1_Y = 0$, so that $Y$ has dimension zero, and the MRC-fibration $f$ is constant. Hence, $X''$ is RC.\\

Let us assume that $(X_\reg, \omega_\reg)$ is locally symmetric and $\widetilde{q}(X) = 0$. The restricted holonomy action being irreducible, with the same arguments as the case $\Hol_p(X_\reg, \omega_\reg) = \U(n)$, we deduce that $f$ is an isomorphism and $\omega''$ is Ricci-flat. We deduce that $(X''_\reg, \omega''_\reg)$ is flat, hence there exists a quasi-\'{e}tale map from an abelian variety $A' \to X''$, which is a contradiction since $\widetilde{q}(X) = 0$.\\

We show $(ii) \Longrightarrow (i)$.
If $X$ is HRC, then we have $\widetilde{q}(X) = 0$ by \cite[Thm.~5.1]{GKKP11}. Assume that $(X_\reg, \omega_\reg)$ is not locally symmetric. Then, by Berger--Simons classification, we have $H_X^\circ = \U(n), \SU(n)$ or $\Sp(n/2)$.\\

By contradiction, let us assume that $H_X^\circ = \SU(n)$ or $\Sp(n/2)$. Then, up to passing to the holonomy cover\footnote{From the correspondence \cite[Thm.~23]{CGG24}, the kernel of the morphism $\pi_1(X_\reg) \to H_X/H_X^\circ$ corresponds to a finite quasi-\'{e}tale cover $X' \to X$, called the \emph{holonomy cover}. On this cover, we have $H_{X'} = H_{X'}^\circ = H_X^\circ$. } we can assume that $H_X = H_X^\circ$. We deduce from the holonomy that $\omega_\reg$ is Ricci-flat. It follows that $\alpha = 0$ over $X_\reg$, hence over $X$ since it is a smooth $(1,1)$-form. Let us denote $\mathrm{PH}_X$ the sheaf of pluriharmonic functions over $X$. The form $\alpha$ yields a class in $H^1(X, \mathrm{PH}_X)$, hence in $H^2(X, \R)$, since we have a morphism:
$$
c_1' : \mathrm{Pic}(X) \longrightarrow H^1(X, \mathrm{PH}_X) \longrightarrow H^2(X, \R).
$$
One can write through local embeddings covering of $X$: $\alpha =: \ddc \varphi_i$, and the family $(\varphi_j - \varphi_i)_{ij}$ produces an element of $Z^1(X, \mathrm{PH}_X)$ as a \v{C}ech cocycle. Since $\alpha = 0$, we get that $\varphi_i \in \mathrm{PH}_X(U_i)$, hence, the associated class in $H^1(X, \mathrm{PH}_X)$ is zero. But the corresponding class in $H^2(X, \R)$ is $c_1(-K_X)$. Indeed, if we pick an open covering and cocycles $(g_{ij})$ of $-K_X$, the class $c_1(-K_X) \in H^2(X, \R)$ is obtained by writing locally $g_{ij} = e^{2\pi \textbf{i}\psi_{ij}}$, and to consider $c_{ijk} := \psi_{ij} + \psi_{jk} - \psi_{ik}$. To construct the connecting map from $H^1(X, \mathrm{PH}_X) \to H^2(X, \R)$, we consider a Cech cocycle $(\varphi_{ij})$, and we write $\varphi_{ij} = \mathrm{Im}(\Psi'_{ij})$, with $\Psi'_{ij}$ holomorphic, and we construct $c'_{ijk} := \Psi'_{ij} + \Psi'_{jk} - \Psi'_{ik}$. Since we have the equality of potentials $\varphi_i - \varphi_j = \log |g_{ij}|^2$, we can choose $\Psi'_{ij} = -4\pi \Psi_{ij}$. This shows that $c_1' = -4\pi c_1$. It follows that $c_1(-K_X) = 0$ and, $X$ has numerically trivial determinant.\\ 

Up to passing to some further quasi-\'{e}tale cover one can assume that $X$ has canonical singularities\footnote{We even deduce from \cite[Prop.~12.10]{GGK19} and the fact that the holonomy is $\SU(n)$ or $\Sp(n/2)$ that $X$ is ICY or IHS, but we will not use it.}, by \cite[Theorem~7.1]{GGK19}. Let $\widehat{X} \to X$ be a resolution of singularities. Since $X$ is assumed HRC, we deduce that $\widehat{X}$ is RC, hence uniruled, and there exists a fibration $\widehat{X} \to Y$ with fiber $\mathbb{P}^1$. If $C$ is a general fiber, $K_{\widehat{X}} \cdot C = \deg(K_C) < 0$, but:
$$
K_{\widehat{X}} \cdot C = \pi^*K_X \cdot C + \sum_i a_i E_i \cdot C \geq 0,
$$
because $\pi^*K_X \cdot C = 0$, and we can choose $C$ general so that $C \cdot E_i \geq 0$ and $a_i \geq 0$ because $X$ has canonical singularities, a contradiction.
\end{proof}

\section{Structure of klt varieties with semi-positive anticanonical bundle}

\begin{theo}[= Theorem~\ref{Decomposition}]
	Let $X$ be a projective variety with canonical singularities such that $-K_X$ is semipositive. Then, there exists a finite quasi-\'{e}tale cover $f : X' \to X$ such that the Albanese map:
	$$
	\mathrm{alb}_{X'} : X' \to \mathrm{Alb}(X')
	$$
	is locally constant and its fiber is a product $\prod_{j} Y_j \times \prod_{k} S_k \times  \prod_{l} Z_l$, where $Y_j$ are ICY, $S_k$ are IHS and $Z_l$ are HRC.
\end{theo}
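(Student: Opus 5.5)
We first pass to convenient covers. By \cite[Thm.~1.1]{MW21} and the reductions at the start of the section ``Analysis of the bound'', there is a finite quasi-\'etale cover on which $q(X)=\widetilde{q}(X)$, the Albanese map $a : X \to A$ is locally constant, and every representation of $\pi_1(X_\reg)$ factors through $\pi_1(X)$. We then go further up to the holonomy cover $X^\hol$, which exists by Theorem~\ref{Compactness of holonomy}, so that $H_X = H_X^\circ = \prod_j G_j^\circ$. Remark~\ref{Coincidence of decompositions albanese holonomy} gives $\mathcal{T}_X = a^*\mathcal{T}_A \oplus \bigoplus_{j\in J}\mathcal{F}_j$ and $\mathcal{T}_{X/A} = \bigoplus_j \mathcal{F}_j$. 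Each $\mathcal{F}_j$ is involutive, parallel and $H_X$-irreducible. Restricting to a fiber $X_a$ with the metric $\omega_a$, Lemma~\ref{Lemma H_Xa} gives $H_{X_a}^\circ = H_X^\circ$. On the cover we have $H_{X_a}\subseteq H_X = H_X^\circ$, so $H_{X_a} = \prod_j G_j^\circ$, and $\mathcal{T}_{X_a} = \bigoplus_j \mathcal{F}_{j|X_a}$ is a decomposition into parallel foliations with irreducible holonomy. Note that $X_a$ still has canonical singularities, $-K_{X_a} = -K_{X|X_a}$ is semipositive, and $\widetilde{q}(X_a)=0$.

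Next we show that each $\mathcal{F}_{j|X_a}$ has algebraic leaves and then split $X_a$ using \cite{Dru18}. The plan is to sort the factors. If $\mathrm{pr}_{\mathcal{F}_j}\overline{\sharp\alpha}_{|\mathcal{F}_j}=0$, the factor is Ricci-flat, with $G_j^\circ = \SU(n_j)$ or $\Sp(n_j/2)$; flat factors are excluded because $\widetilde{q}(X_a)=0$. Otherwise $G_j^\circ = \U(n_j)$, or the factor is locally symmetric with $\lambda>0$ as in the proof of Theorem~\ref{Bound of symmetric tensors}.

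For the positively curved factors we show that $\mathcal{F}_j^\vee$ has no pseudoeffective quotients, or equivalently that $\mathcal{F}_j$ is positive on movable curves, using Proposition~\ref{Polystability of vanishing slopes}. Algebraicity of the leaves, with rationally connected closures, then follows from the criterion of \cite{HP19} in the form of \cite{DGP24}.

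For the Ricci-flat factors we group them into $\mathcal{G} := \bigoplus_{\text{Ricci-flat}}\mathcal{F}_j$ and consider the complement $\mathcal{G}^\perp$. The foliation $\mathcal{G}^\perp$ is algebraic with rationally connected leaves, and we identify it with the relative tangent sheaf of the MRC fibration $X_a \to Y'$ of \cite{MW21}. The Ricci-flat pieces then descend to $Y'$, where $c_1(Y')=0$, and the singular Beauville--Bogomolov decomposition \cite{Dru18,GGK19,HP19} applies to them.

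The main technical obstacle in this part is that \cite{HP19} assumes smoothness in codimension two. We handle it by passing to a $\Q$-factorial terminalization $X_a^t \to X_a$, which exists by \cite{BCHM10}. Since $X_a$ is canonical, this map is crepant, so $-K_{X_a^t}$ is still semipositive. Terminal singularities are smooth in codimension two, so we pull back the foliations, run the algebraicity argument on $X_a^t$, and push the conclusion down. Druel's criterion \cite{Dru18} then yields a quasi-\'etale cover $\widetilde{X_a} \to X_a$ that splits as a product of leaf closures $\prod Y_j \times \prod S_k \times \prod Z_l$. Each factor has irreducible restricted holonomy. Proposition~\ref{Link holonomy HRC} identifies the $\U(n_j)$ and locally symmetric factors as HRC, and \cite[Prop.~12.10]{GGK19} identifies the $\SU$ and $\Sp$ factors as ICY and IHS.

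Finally, the cover of the fiber has to be realized by a cover of $X$. This is the step I expect to be hardest, and it is where Lemma~\ref{Group argument} should enter. The exact sequence $1\to \pi_1(X_{a,\reg}) \to \pi_1(X_\reg) \to \pi_1(A)\to 1$ makes $\pi_1(X_{a,\reg})$ normal. The splitting cover corresponds to a finite-index subgroup $\Gamma \subseteq \pi_1(X_{a,\reg})$, and we may replace it by its normal core. We then need a finite-index subgroup $\Gamma' \subseteq \pi_1(X_\reg)$ with $\Gamma'\cap \pi_1(X_{a,\reg}) \subseteq \Gamma$.

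The plan is to use that $\pi_1(A)$ acts on the finite quotient $\pi_1(X_{a,\reg})/\Gamma$ through conjugation, where we take $\Gamma$ characteristic, for instance by intersecting all subgroups of the same index; this is legitimate if $\pi_1(X_{a,\reg})$ is finitely generated. Since $\Aut$ of a finite group is finite, a finite-index subgroup $\Lambda\subseteq\pi_1(A)\simeq\Z^{2q}$ acts trivially. Taking the preimage of $\Lambda$ and quotienting by $\Gamma$ should produce the required $\Gamma'$, possibly after a further extension argument. The resulting quasi-\'etale cover $X'\to X$ then has a locally constant Albanese map, because it is induced from the monodromy representation $M$ lifted to $\Aut(\widetilde{X_a})$, and its fiber is $\widetilde{X_a}$. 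One must also check that $\mathrm{Alb}(X')$ is isogenous to $A$, which follows from $\widetilde{q}(\widetilde{X_a})=0$.
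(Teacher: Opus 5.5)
Outside the final step, your route is a legitimate variant of the paper's. Sort the factors by Ricci-flatness. For a quotient $\mathcal{F}_j\to\mathcal{Q}$ of a non-Ricci-flat factor, Proposition~\ref{Polystability of vanishing slopes} applied to $\mathcal{Q}^\vee\subseteq\mathcal{F}_j^\vee$ gives $\mu_{\min}(\mathcal{F}_j)>0$ with respect to $[\omega]$. That is the precise form of your ``positive on movable curves'' claim, and the criterion to invoke is the one of \cite{BM16} or Campana--Paun, not \cite{HP19}, so no terminalization is needed there. You then take the Ricci-flat factors from the Beauville--Bogomolov decomposition of $Y'$, instead of the paper's stability of $\Sym^{[m]}\mathcal{F}_j$ combined with \cite{HP19}.

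\textbf{The genuine gap is in the descent step.} You replace the splitting subgroup $\Gamma_0\subseteq\pi_1(X_{a,\reg})$ by its normal or characteristic core $\Gamma$, and you only ask for $\Gamma'\cap\pi_1(X_{a,\reg})\subseteq\Gamma$. The fiber of $\mathrm{alb}_{X'}$ is then the quasi-\'etale cover of $X_a$ attached to $\Gamma'\cap\pi_1(X_{a,\reg})$. This is in general a further cover of $\widetilde{X_a}$, so your concluding claim that the fiber is $\widetilde{X_a}$ does not follow.

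You cannot repair this by saying that quasi-\'etale covers of $\prod_j Y_j\times\prod_k S_k\times\prod_l Z_l$ are again such products. When the factors have non-trivial fundamental groups of their regular loci, diagonal quotients such as $(\widehat{Y}_1\times\widehat{Y}_2)/(\Z/2)$ are quasi-\'etale covers of $Y_1\times Y_2$ that need not be products. Rerunning the argument on $X'$ only reproduces the same problem.

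What you need is the exact equality $\Gamma'\cap\pi_1(X_{a,\reg})=\Gamma_0$. Your setup gives it with one more idea, which is the content of Lemma~\ref{Group argument}:
\begin{itemize}
\item Let $Q:=\pi_1(X_\reg)/\Gamma$, and let $F:=\pi_1(X_{a,\reg})/\Gamma$, which is finite.
\item Choose a finite-index subgroup $\bar\Lambda\subseteq Q$ with $\bar\Lambda\cap F=1$.
\item Replace $\bar\Lambda$ by $\bar\Lambda':=\bar\Lambda\cap N_Q(\overline{\Gamma_0})$. This still has finite index, because $Q$ permutes the finitely many subgroups of the normal subgroup $F$.
\item Let $\Gamma'$ be the preimage of the subgroup $\overline{\Gamma_0}\,\bar\Lambda'$. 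Since $\bar\Lambda'\cap F=1$, an element $g\lambda$ of it lies in $F$ only if $\lambda=1$, hence $\Gamma'\cap\pi_1(X_{a,\reg})=\Gamma_0$.
\end{itemize}

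Producing $\bar\Lambda$ also needs more than ``$\Aut(F)$ is finite''. Only an outer action of $\pi_1(A)$ on $F$ is well defined. Moreover, the kernel $C_Q(F)$ of $Q\to\Aut(F)$ meets $F$ in $Z(F)$, not in $1$. You still have to split off the centre. One way is to use that $C_Q(F)$ is a central extension of a lattice by $Z(F)$, whose commutator pairing is bilinear, so the preimage of a sufficiently divisible sublattice is abelian. Alternatively, as the paper does, use the residual finiteness of the virtually abelian group $Q$ \cite{BF24}.
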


\begin{proof}
Up to passing to a finite quasi-\'{e}tale cover, let us assume that $X$ is maximally quasi-\'{e}tale with $q(X) = \widetilde{q}(X) =: q$ and the holonomy of $(X_\reg, \omega_\reg)$ is connected.
We have the Albanese morphism:
$$
a : X \longrightarrow A
$$
that is locally constant, and its general fiber is $X_a$. It is of the form $F \times Y'$ where $F$ is rationally connected and $Y'$ is a product of ICY and IHS varieties. The fiber $X_a$ is hence a klt projective variety such that $-K_{X_a}$ is semi-positive. From the decomposition:
$$
\mathcal{T}_X = a^*\mathcal{T}_A \oplus \bigoplus_{j \in J} \mathcal{F}_j,
$$
we deduce that:
$$
\mathcal{T}_{X_a} = \bigoplus_{j \in J} \mathcal{F}_{j|X_a},
$$
and this is the decomposition of the action of $H_{X_a}^\circ$. In particular, at a point $p \in X_a$, the subspace of fixed points is trivial. Hence, we deduce from Theorem~\ref{Identification euclidean factor augmented irregularity} that $\widetilde{q}(X_a) = 0$. Let us investigate this case.

\begin{lemm}\label{Decomposition for vanishing augmented irregularity}
	Let $X$ be a projective variety with canonical singularities such that $-K_X$ is semipositive and $\widetilde{q}(X) = 0$. Then, there exists a finite quasi-\'{e}tale cover $X' \to X$ such that:
	$$
	X' = \prod_{j} Y_j \times \prod_{k} S_k \times  \prod_{l} Z_l,
	$$
	where $Y_j$ are ICY, $S_k$ are IHS and $Z_l$ are HRC.
\end{lemm}

\begin{proof}[Proof of the lemma]
In order to apply \cite[Thm.~25]{DGP24}, we show that the holonomy bundles $\mathcal{F}_{j}$ have algebraic leaves.
From the positivity assumption on $-K_X$ and the fact that the decomposition 
$$\mathcal{T}_X = \bigoplus_j \mathcal{F}_j$$ is orthogonal, we get that $\mu_{\omega}(\mathcal{F}_{j}) \geq 0$. Note that $\mathcal{F}_0 = 0$ here as a consequence of $\widetilde{q}(X) = 0$ and Theorem~\ref{Identification euclidean factor augmented irregularity}.\\

Without loss of generality, we assume that $X$ is smooth in codimension 2, something that can be achieved by considering a $\Q$-factorial terminalization $\beta : X_{\mathrm{ter}} \to X$ whose existence is guaranteed by \cite[Cor.~1.4.3]{BCHM10}. It is a small birational modification such that $X_{\mathrm{ter}}$ has terminal singularities, hence it is smooth in codimension 2. Because $\beta$ is a crepant morphism, we deduce that $-K_{X_\mathrm{ter}}$ is semipositive by pulling back the non-negatively curved hermitian metric on $-K_X$ to $-K_{X_\mathrm{ter}}$ through $\beta$. Moreover, we have $\widetilde{q}(X_{\mathrm{ter}}) = 0$: this can be derived from the arguments \cite[Lemma~4.4]{Dru18} and using that klt singularities are rational. Now, if there exists a finite quasi-\'{e}tale morphism $\widetilde{X_{\mathrm{ter}}} \to X_\mathrm{ter}$ and a decomposition of $\widetilde{X_{\mathrm{ter}}}$, then the Stein factorization $\widetilde{X} \to X$ of the composition $\widetilde{X_{\mathrm{ter}}} \to X_{\mathrm{ter}} \to X$ is quasi-\'{e}tale and it splits into a product of varieties thanks to \cite[Lemma~4.6]{Dru18}.\\

\begin{itemize}
	\item Case 1 : $\mu_{\omega}(\mathcal{F}_{j}) > 0$
	
	We argue similarly to \cite[Claim.~27]{DGP24}. Let $m$ be a positive integer such that $mA$ is very ample. Let us choose $C$ a general curve in $|mA|$ such that $C \subseteq X_\reg$ and $\mathcal{F}_{j|C}$ is locally free on a neighborhood of $C$. If $m$ is sufficiently large, we have $\mathcal{F}_{j|C}$ that is semistable, hence ample since it has positive slope. Then, it has algebraic leaves thanks to \cite[Fact.~2.1.1]{BM16}.\\
	
	\item Case 2 : $\mu_{\omega}(\mathcal{F}_{j}) = 0$. 
	
	First, the restricted holonomy $G_j^\circ$ acting on $\mathcal{F}_j$ at a point $p \in X_\reg$ is $\U(n_j)$, $\SU(n_j)$ or $\Sp(n_j/2)$. Indeed, let us show that $\mathcal{F}_j$ is not of symmetric type. By contradiction, if $\mathcal{F}_j$ is of symmetric type, the local decomposition into a product of K\"{a}hler manifolds \eqref{Local decomposition of Kahler manifold} show that the associated hermitian manifold $(U_j, \omega_j)$ is symmetric and Ricci-flat\footnote{It is Ricci-flat since we have from local decompositions of neighborhood $U$ of $p$ into a product of manifolds $(U_0, \omega_{\mathrm{euc}}) \times \prod_j (U_j, \omega_j)$ that $\textbf{i}\Theta(\mathcal{F}^\vee_j, h_j^\vee) \wedge \omega^{n-1} = -\overline{\sharp \Ric \omega_j} \omega^{n} = -\mathrm{pr}_{\mathcal{F}^\vee_j}(\overline{\sharp \alpha}_{\mathcal{F}^\vee_j}) \omega^n = 0$ by Proposition~\ref{Polystability of vanishing slopes} second part of $(i)$. It follows that $G_j^\circ$ is $\SU(n_j)$ or $\Sp(n_j/2)$.}, hence it is flat contradicting the non-triviality of the action of $G_j^\circ$.
	
	Let us show that $\Sym^{[m]}\mathcal{F}_j$ is stable. First, one can show that it is polystable. In order to apply point $(iii)$ of Proposition~\ref{Polystability of vanishing slopes}, it is sufficient to show that $c_1(\Sym^{[m]}\mathcal{F}_j) \cdot \left[ \omega \right]^{n-1} = 0$. 
	One can construct two subsheaves of $\Omega^1_{\widehat{X}}$: the first one is $S_1 := \Sym^{[m]} \widehat{\mathcal{F}_j}$, and the second one is $S_2 := \widehat{\Sym^m \mathcal{F}_j}$, where $\widehat{\bullet}$ means the saturation of the sheaf (in $\Omega^1_{\widehat{X}}$ for $\mathcal{F}_j$ and in $\Sym^m \Omega^1_{\widehat{X}}$ for $\Sym^m \mathcal{F}_j$). 
	Both sheaves coincide over $\widehat{X} \backslash E$, hence there exists numbers $k_i$ such that $c_1(S_2) = c_1(S_1) + \sum_i k_i c_1(E_i) $. There exists a big open subset $\widehat{U'}$ such that $S_1$ is a subbundle of $\Sym^m\Omega^1_{\widehat{X}}$. We have by \cite[Proof of $(\ast\ast)$, p.166]{Kob80} that:
	\begin{align}
	c_1(S_1) \cdot \left[ \omega_{t, \varepsilon}\right]^{n-1} 
	& = \int_{\widehat{U'}} c_1(\Sym^m \widehat{\mathcal{F}_j}, \Sym^m h_{t, \varepsilon}^\vee) \wedge \omega_{t, \varepsilon}^{n-1} \\
	& \underset{t, \varepsilon \to 0}{\longrightarrow} \binom{n_j + m -1}{n_j} c_1(\widehat{\mathcal{F}_j}) \cdot \left[ \pi^* \omega \right]^{n-1} = 0.
	\end{align}
	Hence, we have $c_1(S_1) \cdot \left[ \pi^* \omega \right]^{n-1} = 0$. Since $E_i$ are $\pi$-exceptional, we have:
	\begin{align}
	    0 = c_1(S_1) \cdot \left[ \pi^* \omega \right]^{n-1} = c_1(S_2) \cdot \left[ \pi^* \omega \right]^{n-1}
	\end{align}
	and by the projection formula, we deduce:
	\begin{align}
	    0 = c_1(S_2) \cdot \left[ \pi^* \omega \right]^{n-1} = c_1(\Sym^{[m]} \mathcal{F}_j) \cdot \left[ \omega\right]^{n-1}.
	\end{align}
	We conclude that $\Sym^{[m]} \mathcal{F}_j$ is a proper subsheaf of $\Sym^{[m]}\Omega^{[1]}_{X}$ with vanishing slope. By point $(iii)$ of Proposition~\ref{Polystability of vanishing slopes}, it is polystable and parallel with respect to $\omega$. By contradiction, if it is not stable, one can consider a destabilizing subsheaf $\mathcal{G}$. Thanks to Proposition~\ref{Polystability of vanishing slopes}, it is parallel. Let $p \in X_\reg$ be a point where the inclusion $\mathcal{G} \hookrightarrow \Sym^{[m]}\mathcal{F}_j$ is an inclusion of bundles. Then, there is an irreducible action of $G_j^\circ = \U(n_j), \SU(n_j)$ or $\Sp(n_j/2)$ on $\mathcal{F}_{j,p}$ such that the induced action on $\Sym^m \mathcal{F}_{j,p}$ remains irreducible. Because $\mathcal{G}$ is parallel, it stabilizes the action of $G_j^\circ$, contradicting the irreducibility of the action on $\Sym^m\mathcal{F}_{j,p}$. We deduce that $\Sym^{[m]}\mathcal{F}_j$ is stable with respect to $\left[ \omega \right]$. 
	
	If $\mathcal{F}_j$ is not pseudo-effective, then it has algebraic leaves by \cite[Prop.~8.4]{Dru18}. If it is pseudo-effective, we are in position to apply \cite[Thm.~1.1]{HP19}, and we deduce that:
	$$
	c_1(\mathcal{F}_j) \cdot A^{n-1} = c_2(\mathcal{F}_j) \cdot A^{n-2} = 0.
	$$
	Thanks to \cite[Thm.~1.4]{LT18}, the subbundle $\mathcal{F}_{j|X_\reg}$ is flat and it is given by a representation $\pi_1(X_{\reg}) \to \GL(n_j, \C)$. Up to passing to the maximally quasi-\'{e}tale cover, we can assume that $X$ is maximally quasi-\'{e}tale, so that $\mathcal{F}_{j}$ is flat since this representation factors through $\pi_1(X) \to \GL(n_j, \C)$. The same arguments as Step 2 of the proof of Theorem~\ref{Identification euclidean factor augmented irregularity} show that it contradicts $\widetilde{q}(X) = 0$.
\end{itemize}
\end{proof}

From this lemma, we deduce that there exists a finite quasi-\'{e}tale cover $\widetilde{X_a} \to X_a$ on which $\widetilde{X_a}$ splits as a product $\prod_{j} Y_j \times \prod_{k} S_k \times  \prod_{l} Z_l$ of ICY, IHS and HRC varieties. Secondly, the universal cover of $A$ is $\C^q \to A$, and from the local constancy of the MRC-fibration, we deduce that that $X \underset{A}{\times} \C^q = X_a \times \C^q$, and the projection $X_a \times \C^q \to X$ is \'{e}tale. If we combine both constructions, there exists a quasi-\'{e}tale (not finite) map $\widetilde{X_a} \times \C^q \to X$. Since it is quasi-\'{e}tale, the composition corresponds to a subgroup $G \leq \pi_1(X_\reg) =: \pi^X$, thanks to \cite[Thm.~23]{CGG24}. It is of finite index in $\pi_1(X_{a \reg}) =: \pi^{X_a}$. We show the following lemma.

\begin{lemm}\label{Group argument}
	With notation as above, there exists a finite-index subgroup $H$ of $\pi^X$ such that $H \cap \pi^{X_a} = G$.
\end{lemm}

\begin{proof}[Proof of the lemma]
First, we consider the group $N := \bigcap_{\gamma \in \pi^X} \gamma G \gamma^{-1}$. It is of finite-index\footnote{This comes from the fact that $\gamma G \gamma^{-1}$ is a subgroup of finite-index $\left[ \pi^{X_a} : G \right]$ in $\pi^{X_a}$. Since this latter group is finitely generated, the number of subgroups of index $\left[ \pi^{X_a} : G \right]$ is finite.} in $\pi^{X_a}$ and it is normal in $\pi^X$. In the following, for any group $N \subseteq K \subseteq \pi^X$, we denote $\overline{K} := K/N$. The problem is to construct a subgroup of finite index $\overline{H}$ of $\overline{\pi^X}$ such that $\overline{\pi^{X_a}} \cap \overline{H} = \overline{G}$ in the case where we have an extension:
$$
1 \longrightarrow \overline{\pi^{X_a}} \longrightarrow \overline{\pi^X} \overset{\rho}{\longrightarrow} \Z^{2q} \longrightarrow 1
$$
where the group $\overline{\pi^{X_a}}$ is finite.\\

As a consequence of \cite[Lemma~2.3]{BF24}, the group $\overline{\pi^X}$ is virtually abelian, hence residually finite. This ensures the existence of a finite-index subgroup $\overline{A}$ such that $\overline{A} \cap \overline{\pi^{X_a}} = 1$. Then, the restriction of $\rho$ to $A$ provides an isomorphism onto a finite-index subgroup $S \subseteq \Z^{2q}$. One can consider $\overline{K} := \ker (\overline{\pi^{X}} \overset{\rho}{\longrightarrow} \Z^{2q} \to \Z^{2q}/S)$. This group contains $\overline{\pi^{X_a}}$ and $\overline{A}$, and the morphism $\rho$ provides a splitting:
$$
	\xymatrix{
	1 \ar[r] & \overline{\pi^{X_a}} \ar[r] & \overline{K} \ar[r]^{\rho} & S  \ar[r] \ar@/^1pc/[l] & 1\\  
	}.
$$ 
Up to replace $\overline{K}$ by the normalizer $\overline{N}$ of $\overline{G}$ in $\overline{K}$ that is of finite-index and $\overline{A}$ by $\rho^{-1}(\rho(\overline{N}))$, and $S$ by $\rho(\overline{N})$, we can assume moreover that $\overline{G}$ is normal in $\overline{K}$. We get that the group $\overline{K}$ is a semi-direct product $\overline{K} = \overline{\pi^{X_a}}\,\, \overline{A}$, and the set $\overline{H} = \overline{G}\,\overline{A}$ is a subgroup of $\overline{K}$ of finite index\footnote{This comes from the inclusions $\overline{A} \subseteq \overline{H} \subseteq \overline{K}$.}. Moreover, the uniqueness of the decomposition in the semi-direct product guarantees that $\overline{H} \cap \overline{\pi^{X_a}} = \overline{G}$. This group $\overline{H}$ satisfies the sought properties.
\end{proof}

The existence of the group $H$ from the previous lemma ensures the existence of a finite quasi-\'{e}tale (hence \'{e}tale because $X$ is maximally quasi-\'{e}tale) cover $f : X' \to X$ from the correspondence \cite[Thm.~23]{CGG24}. To sum up the situation, we have the following tower of quasi-\'{e}tale maps:
$$
	\xymatrix{
	 & \widetilde{X_a} \times \C^q \ar[d]\ar[ldd] \\
	 & X_a \times \C^q \ar[d] \\
	X' \ar[r] & X
	}.
$$
This yields at the level of Albanese morphisms a commutative diagram:
$$
	\xymatrix{
	X' \ar[d] \ar[r] & \mathrm{Alb}(X') \ar[d]^\beta \\
	X \ar[r] & \mathrm{Alb}(X)
	}.
$$
Since the morphism $X' \to X \to \mathrm{Alb}(X)$ and $X' \to \mathrm{Alb}(X')$ are surjective, so is the morphism $\beta : \mathrm{Alb}(X') \to \mathrm{Alb}(X)$. Moreover, both abelian varieties have same dimension equal to $q$. Hence it is an \'{e}tale morphism\footnote{Indeed, such a morphism induces a surjective morphism between the universal cover $\widetilde{\beta} : \C^q \to \C^q$ sending the lattice $\Lambda'$ (such that $\mathrm{Alb}(X') = \C^q/\Lambda'$) to $\Lambda$ (such that $\mathrm{Alb}(X) = \C^q/\Lambda$). Hence, it follows that for any $\lambda' \in \Lambda'$, we have $\widetilde{\beta}( \cdot + \lambda') - \widetilde{\beta}$ that is constant. We conclude that the derivatives are holomorphic elliptic functions, so that they are constant. Hence, there exists a matrix $A$ such that $\widetilde{\beta}(z) = Az$ which is invertible since it is a surjective endomorphism. We deduce that the derivative of $\beta$ is invertible at any point: the morphism $\beta$ is \'{e}tale.}. The Albanese morphism associated to $X'$ being locally trivial, we denote by $X'_a$ the general fiber of this fibration. Hence, from Serre long exact sequences associated to both Albanese fibrations, we have:
$$
\xymatrix{
	1 \ar[r] & G = \pi_1(X'_{a \reg}) \ar@{^{(}.>}[d] \ar[r] & H = \pi_1(X'_\reg) \ar[r] \ar@{^{(}->}[d] & \pi_1(\mathrm{Alb}(X')) \ar[r] \ar@{^{(}->}[d] & 1\\
	1 \ar[r] & \pi_1(X_{a \reg})  \ar[r] & \pi_1(X_\reg) \ar[r] & \pi_1(\mathrm{Alb}(X)) \ar[r] & 1\\
	}.
$$ 
We deduce that a fiber of $\mathrm{alb}_{X'}$ is sent on a fiber on $\mathrm{alb}_X$ by the morphism $X' \to X$, in other words we have a quasi-\'{e}tale morphism $X'_a \to X_a$. The corresponding subgroup in $\pi_1(X_{a \reg})$ is $G$, providing an isomorphism of quasi-\'{e}tale coverings 
$$X'_a \simeq \widetilde{X_a} = \prod_{j} Y_j \times \prod_{k} S_k \times  \prod_{l} Z_l.$$ 
\end{proof}

\section{Fundamental groups}

The aim of this section is to show the following result.

\begin{theo}[= Theorem \ref{Virtually abelian}]
	Let $X$ be a projective variety with canonical singularities such that $-K_X$ is semipositive. Then, $\pi_1(X_\reg)$ is virtually abelian.
\end{theo}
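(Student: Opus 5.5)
We first reduce to the structure given by Theorem~\ref{Decomposition}. Virtual abelianity of $\pi_1(X_\reg)$ is inherited from any finite-index subgroup. A finite quasi-\'{e}tale cover $X' \to X$ yields, via \cite[Thm.~23]{CGG24}, a finite-index subgroup $\pi_1(X'_\reg) \le \pi_1(X_\reg)$. Hence we may replace $X$ by the cover $X'$ of Theorem~\ref{Decomposition}. We may moreover assume $X$ is maximally quasi-\'{e}tale, so that the Albanese map $a : X \to A$ is locally constant with fiber
$$X_a = \prod_j Y_j \times \prod_k S_k \times \prod_l Z_l,$$
a product of ICY, IHS and HRC varieties. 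As in the proof of Theorem~\ref{Decomposition}, local constancy gives $X_\reg \to A$ as a locally trivial fibration with fiber $X_{a\reg}$. The homotopy exact sequence, together with $\pi_2(A)=0$, gives
$$1 \to \pi_1(X_{a\reg}) \to \pi_1(X_\reg) \to \Z^{2q} \to 1.$$

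The second step is to show that $\pi_1(X_{a\reg})$ is finite. Since $(X_a)_\reg = \prod (Y_j)_\reg \times \prod (S_k)_\reg \times \prod (Z_l)_\reg$, the group $\pi_1(X_{a\reg})$ is the product of the fundamental groups of the regular loci of the factors. For the ICY and IHS factors, finiteness is the result of \cite{FGSW26} recalled in the introduction. For an HRC factor $Z$, we follow the strategy announced in the introduction. By \cite{JLR26}, it suffices to produce a very free rational curve in $Z_\reg$. To get one, we verify the positivity criterion of \cite{JLR26} on $\mathcal{T}_Z$. The factor $Z$ is klt with $-K_Z$ semipositive, since it is a factor of a product whose anticanonical divisor carries the restricted semipositive metric. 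By Proposition~\ref{Polystability of vanishing slopes}, every quotient of $\mathcal{T}_Z$ has nonnegative slope. A quotient of slope zero would correspond to a parallel subsheaf of $\Omega^{[1]}_Z$ of slope zero. We exclude this via the irreducibility of the holonomy, using Proposition~\ref{Link holonomy HRC} (holonomy $\U(n)$ or non-flat locally symmetric). We also use that a flat summand is impossible because $\widetilde{q}(Z)=0$, which follows from HRC and \cite[Thm.~5.1]{GKKP11}, arguing as in Case~2 of Lemma~\ref{Decomposition for vanishing augmented irregularity}. Hence all quotients of $\mathcal{T}_Z$ restricted to a general complete-intersection curve have positive degree. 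This is the ampleness-type condition of \cite{JLR26}, which gives a very free curve and hence $|\pi_1(Z_\reg)| < \infty$.

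Finally, we conclude with the group theory of the exact sequence above. Since $\pi_1(X_{a\reg})$ is finite, \cite[Lemma~2.3]{BF24} (already used in Lemma~\ref{Group argument}) shows that an extension of $\Z^{2q}$ by a finite group is virtually abelian. Concretely, we would argue as follows. The conjugation action on the finite normal subgroup has a finite-index kernel $K$. Inside $K$, the finite subgroup becomes central, so the extension is central. A central extension of a free abelian group by a finite group has a finite-index abelian subgroup, obtained for instance by passing to the preimage of $m\Z^{2q}$ with $m$ killing the commutator pairing. Hence $\pi_1(X_\reg)$ is virtually abelian.

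The main obstacle is the HRC step. One must check that the hypotheses of \cite{JLR26} (stated in terms of positivity of $\mathcal{T}_Z$ on movable or complete-intersection curves, for klt or canonical $Z$) are met using only the metric information from the Bochner principle. Slope positivity must be upgraded to the strict positivity the criterion needs. The irreducibility of the holonomy from Proposition~\ref{Link holonomy HRC}, combined with the stability argument for $\Sym^{[m]}$ from Lemma~\ref{Decomposition for vanishing augmented irregularity}, is what we would try first to rule out zero-slope quotients.
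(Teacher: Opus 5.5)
Your proposal is correct and follows the paper's proof essentially step by step. The steps match: the reduction via Theorem~\ref{Decomposition} to a locally constant Albanese map; the exact sequence with finite kernel, with the ICY/IHS factors handled by \cite{FGSW26} and the HRC factors by the Bochner principle plus irreducibility of the holonomy, which rules out slope-zero (hence numerically trivial) quotients of the tangent sheaf so that \cite{JLR26} applies; and the finite-by-$\Z^{2q}$ group argument. The differences are cosmetic: the paper feeds the absence of quotients with numerically trivial $c_1$ into \cite[Thm.~3.4]{JLR26} to get a 1-free curve, rather than phrasing it through ampleness and very free rational curves, and it concludes by residual finiteness (a finite-index subgroup meeting $\pi_1(X_{a\reg})$ trivially) instead of your explicit centralizer and commutator-pairing argument.
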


\begin{proof}
	Up to picking some finite \'{e}tale covers and using Theorem~\ref{Decomposition}, we can assume that the Albanese morphism $a : X \to A$ is locally constant and we have an isomorphism of the fiber $X_a = \prod_{j} Y_j \times \prod_{k} S_k \times  \prod_{l} Z_l$, where $Y_j$ are ICY, $S_k$ are IHS and $Z_l$ are HRC. We have a short exact sequence:
	$$
	0 \longrightarrow \pi_1(X_{a \reg}) \longrightarrow \pi_1(X_\reg) \longrightarrow \pi_1(A)	 \longrightarrow 0.
	$$
	Moreover, we have  $\pi_1(X_{a \reg}) = \prod_{j} \pi_1(Y_{j \reg}) \times \prod_{k} \pi_1(S_{k \reg}) \times  \prod_{l} \pi_1(Z_{l \reg})$. Thanks to \cite[Theorem~1.3]{FGSW26}, the factors $ \pi_1(Y_{j \reg})$ and $\pi_1(S_{k \reg})$ are finite. We show that the remaining factors are finite.

\begin{lemm}
	Let $X$ be a klt projective variety such that $-K_X$ is semipositive. Assume that $X$ is HRC and the holonomy at a point $p \in X_\reg$ acts irreducibly. Then, $\pi_1(X_\reg)$ is finite. 
\end{lemm}

\begin{proof}
	Using \cite[Thm.~1.6]{JLR26}, it suffices to show that $X$ admits a 1-free curve, namely a morphism $s : C \to X_\reg$ such that every positive rank quotient of $s^*\mathcal{T}_X$ has slope greater than $2g(C) + 1$. Since $-K_X$ is semipositive (hence nef), the existence of such curves is related to some positivity property of the tangent bundle guaranteed by \cite[Thm.~3.4]{JLR26}. More precisely, the variety $X$ admits a 1-free curve if there is no positive rank quotient of $\mathcal{T}_X$ with numerically trivial first Chern class. Let us show that this property holds.\\
	
	By contradiction, let $\mathcal{Q}$ be a positive rank quotient of $\mathcal{T}_X$ such that $c_1(\mathcal{Q}) = 0$. Hence, by taking the dual functor which is left exact, $\mathcal{Q}^\vee$ is a subsheaf of $\Omega^{[1]}_X$, such that $c_1(\mathcal{Q}^\vee) = 0$. Thanks to Proposition~\ref{Bochner-Kobayashi property}, the sheaf $\mathcal{Q}^\vee$ is parallel with respect to the connection induced by $\omega$, hence at a general point $p \in X_\reg$, it yields a subrepresentation of the action of the holonomy. Hence, since $\mathcal{Q}^\vee$ is a reflexive subsheaf of $\Omega^{[1]}_X$ and they have same rank, they are isomorphic. Hence, we get that $\mu_{H_0}(\Omega^{[1]}_X) = 0$. We deduce that $\omega$ is a Ricci-flat metric by arguing similarly to the proof of the implication $(ii) \Longrightarrow (i)$ of Lemma~\ref{Link holonomy HRC}, and its holonomy at a general point is $\SU(n)$ or $\Sp(n/2)$, contradicting Lemma~\ref{Link holonomy HRC}.
\end{proof}

The arguments given in the proof of Lemma~\ref{Group argument} show that $\pi_1(X_\reg)$ is residually finite. We deduce that there exists a subgroup $H$ of finite-index such that $H \cap \pi_1(X_{a \reg}) = \lbrace 1 \rbrace$. This indicates that the restriction to $H$ of the morphism $\pi_1(X_\reg) \to \pi_1(A) \simeq \Z^{2q}$ is injective. Hence, $H$ is an  abelian subgroup of $\pi_1(X_\reg)$ with finite index.
\end{proof}

\printbibliography

\end{document}